\documentclass{article}
\usepackage{amssymb}
\usepackage{eurosym}
\usepackage{ulem}
\usepackage{amsthm}
\usepackage{graphicx}
\usepackage{float}
\usepackage{amsmath}
\usepackage{hyperref}
\usepackage[T1]{fontenc}
\usepackage[utf8]{inputenc}
\usepackage{authblk}
\usepackage{xcolor}
\usepackage{multicol}


\newcommand{\mg}{\mathfrak g }

\newcommand{\mn}{\mathfrak n }
\newcommand{\mm}{\mathfrak m }
\newcommand{\mmu}{\mathfrak u }

\newcommand{\mk}{\mathfrak k }

\newcommand{\ma}{\mathfrak a }
\newcommand{\mgg}{\mathfrak g }
\newcommand{\mpp}{\mathfrak p }
\newcommand{\mt}{\mathfrak t }

\newcommand{\so}{\mathfrak{so} }

\newcommand{\gl}{\mathfrak{gl} }

\newcommand{\lela}{\left \langle}
\newcommand{\rira}{\right \rangle}
\newcommand{\lra}{\longrightarrow}

\newcommand{\bs}{\backslash}

\renewcommand{\l}{\lambda}

\newcommand{\GL}{\rm GL}

 \newcommand{\R}{\mathbb R}

\newcommand{\K}{\mathbb K}
\newcommand{\F}{\mathbb F}

\newtheorem{theorem}{Theorem}[section]
\newtheorem{lemma}[theorem]{Lemma}

\newtheorem{proposition}[theorem]{Proposition}

\newtheorem{example}[theorem]{Example}
\newtheorem{observation}[theorem]{Remark}
\newenvironment{prova}{\noindent {\bf Proof:}}{\hfill $\qed $ \newline}

\DeclareMathOperator{\ad}{ad}

\DeclareMathOperator{\diag}{diag}

\numberwithin{equation}{section}
\begin{document}

\title{Invariant almost complex structures on real flag manifolds}
\author[1]{Ana P. C. Freitas \thanks{Supported by CNPq grant 476024/2012-9. E-mail: anapcdfreitas@gmail.com}}
\author[1,2]{Viviana del Barco\thanks{Supported by Fapesp grant 2015/23896-5. E-mail: delbarc@ime.unicamp.br}}
\author[1]{Luiz A. B. San Martin \thanks{Supported by CNPq grant no. 476024/2012-9 and Fapesp grant no.
2012/18780-0. E-mail: smartin@ime.unicamp.br}}
\affil[1]{IMECC-UNICAMP}\affil[2]{UNR-CONICET}

\renewcommand\Authands{ and }

\maketitle

\begin{abstract}
In this work we study the existence of invariant almost complex structures
on real flag manifolds associated to split real forms of complex simple Lie
algebras. We show that, contrary to the complex case where the invariant
almost complex structures are well known, some real flag manifolds do not
admit such structures. We check which invariant almost complex structures
are integrable and prove that only some flag manifolds of the Lie algebra $%
C_{l}$ admit complex structures.
\end{abstract}

\textit{MSC: 22F30, 32Q60, 53C55.}

\section{Introduction}

A flag manifold of a non compact semisimple Lie algebra $\mathfrak{g}$, is a quotient space $\mathbb{%
F}_{\Theta }=G/P_{\Theta }$, where $G$ is a connected group with Lie algebra 
$\mathfrak{g}$ and $P_{\Theta }$ is a parabolic subgroup. If $K\subset G$ is
a maximal compact subgroup and $K_{\Theta }=K\cap P_{\Theta }$, then the
flag $\mathbb{F}_{\Theta }$ can be written in the form $\mathbb{F}_{\Theta
}=K/K_{\Theta }$.

In this work, we study the existence and integrability of invariant almost
complex structures on real flag manifolds $\mathbb{F}_{\Theta }$ in the case that $\mathfrak{g}$ is a split real form of a complex simple Lie algebra.  Our goal is to make an exhaustive investigation of the real flag manifolds $%
\mathbb{F}_{\Theta }$ that admit $K$-invariant almost complex structures and
to verify their integrability, that is, when they are indeed complex
structures.

The invariant geometry of complex flag manifolds has been extensively studied. Regarding invariant geometry of complex flag manifolds, the literature is exhaustive and
 goes back to Borel \cite{bor} and Wolf-Gray \cite{gw}, 
\cite{gw1}. Recent works are \cite{grego}, \cite{Monk}, \cite%
{burstall1990twistor}, \cite{burstall1987tournaments}, \cite%
{negreiros1988some}, \cite{san2003invariant}, \cite{san2006invariant}, \cite%
{grama2015invariant}, \cite{parede}, \cite{cohen20021} and \cite%
{arvanitoyeorgos1993new}.

For real flag manifolds the literature is much more sparse. There is no
systematic treatment of the invariant geometric structures on these flag
manifolds. An attempt to fill this gap was made recently by Patr\~{a}o and San
Martin \cite{patrao2015isotropy} who provide a detailed analysis of the
isotropy representations for the flag manifolds of the split real forms of
the complex simple Lie algebras.

In this paper we rely on the results of \cite{patrao2015isotropy} to build
(or to prove the non-existence of) $K$-invariant almost complex structures on
the real flag manifolds. The conclusion is that only a few flag manifolds
(associated to split real forms) admit $K$-invariant almost complex
structures. In this sense we obtain the following result.

\begin{theorem}\label{teo1}
A real flag manifold $\F_\Theta=K/K_\Theta$ admits a $K$-invariant almost complex structure structure if and only if it is a maximal flag of type $A_3$, $B_2$, $G_2$, $C_l$ for $l$ even or $D_l$ for $l\geq 4$, or if it is one of the following intermediate flags:
\begin{itemize}
\item of type $B_3$ and $\Theta=\{\lambda_1-\lambda_2, \lambda_2-\lambda_3\}$;
\item of type $C_l$ with  $\Theta=\{\lambda_{d}-\lambda_{d+1},\ldots,\lambda_{l-1}-\lambda_l,2\lambda_l\}$ for $d>1$, $d$ odd.
\item of type $D_l$ with $l=4$ and $\Theta$ being one of: $\{ \lambda_{1}-\lambda_{2},\lambda_{3}-\lambda_{4}\}$, $\{ \lambda_{1}-\lambda_{2},\lambda_{3}+\lambda_{4}\}$, $\{ \lambda_{3}-\lambda_{4},\lambda_{3}+\lambda_{4}\}$.
\end{itemize}\end{theorem}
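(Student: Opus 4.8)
The plan is to reduce the existence of a $K$-invariant almost complex structure to a purely representation-theoretic parity condition on the isotropy representation, and then to verify that condition flag by flag using the explicit decompositions of \cite{patrao2015isotropy}.

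\textbf{The reduction.} A $K$-invariant almost complex structure on $\F_\Theta=K/K_\Theta$ is the same as a complex structure $J$ on the isotropy module $\mathfrak m=T_o\F_\Theta$ commuting with the isotropy representation $\chi$ of $K_\Theta$, i.e. an element $J\in\End_{K_\Theta}(\mathfrak m)$ with $J^2=-\mathrm{id}$. Since $K_\Theta$ is compact, write $\mathfrak m=\bigoplus_i V_i^{\,m_i}$ as a sum of $K_\Theta$-isotypic components, with the $V_i$ irreducible, pairwise non-isomorphic, and with commuting division algebra $\mathbb D_i\in\{\R,\C,\H\}$; then $\End_{K_\Theta}(\mathfrak m)\cong\bigoplus_i M_{m_i}(\mathbb D_i)$. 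Such a ring contains a square root of $-\mathrm{id}$ iff each factor does, which is automatic when $\mathbb D_i\in\{\C,\H\}$ and holds for $\mathbb D_i=\R$ iff $m_i$ is even. Hence $\F_\Theta$ admits a $K$-invariant almost complex structure if and only if every irreducible summand of $\chi$ of real type occurs with even multiplicity. Everything then reduces to reading off multiplicities and types from the decomposition of $\chi$.

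\textbf{Maximal flags.} When $\Theta=\emptyset$ the isotropy group is the finite group $M=Z_K(\mathfrak a)$, a $2$-group all of whose irreducible real representations are one-dimensional (hence of real type), and $\mathfrak m$ is the sum over the positive roots $\alpha$ of $M$-invariant lines $\mathfrak m_\alpha$ on which $M$ acts through the sign character $\alpha|_M$. So the maximal flag admits an invariant almost complex structure iff every $M$-equivalence class of positive roots has even cardinality, a finite check per type. In type $A_l$ two positive roots $\lambda_i-\lambda_j$ and $\lambda_k-\lambda_l$ are $M$-equivalent iff $\{i,j\}=\{k,l\}$, with the sole extra coincidence $\lambda_i-\lambda_j\sim\lambda_k-\lambda_l$ when $\{i,j,k,l\}=\{1,2,3,4\}$; so the condition holds exactly for $A_3$. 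In type $C_l$ the $l$ long roots $2\lambda_i$ all carry the trivial character, while the short roots pair off as $\lambda_i-\lambda_j\sim\lambda_i+\lambda_j$; so the condition holds iff $l$ is even. The remaining types are handled identically from the data of \cite{patrao2015isotropy}: the condition survives for $B_2$, $G_2$ and for $D_l$ with $l\ge 4$, and fails for $B_l$ with $l\ge 3$ (there is a singleton class) and for $F_4$ and the $E$-series.

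\textbf{Proper flags.} For $\Theta\neq\emptyset$ the group $K_\Theta$ has a nontrivial identity component and $\chi$, now taken from the explicit list in \cite{patrao2015isotropy}, may have higher-dimensional irreducible summands; we apply the criterion above directly. For all but finitely many $\Theta$ one exhibits a single root $\alpha$ whose $K_\Theta$-component inside $\mathfrak m$ is a one-dimensional real-type module of multiplicity one — a $K_\Theta$-invariant line on which no $J$ can act — which rules out an invariant almost complex structure; this is what occurs outside the short list for $B_3$, $C_l$ ($d$ odd) and $D_4$. For the finitely many exceptions one checks from the decomposition that every real-type summand has even multiplicity (the surviving $C_l$ family by the same long-root parity phenomenon as in the maximal case, the $B_3$ and $D_4$ cases by direct inspection) and then produces $J$ explicitly. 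The main obstacle is precisely this last step: there is no shortcut around analyzing, flag by flag and type by type, the $\langle\Theta\rangle$-strings of roots and the action of the semisimple part of $K_\Theta$ on the associated subspaces in order to pin down the real/complex/quaternionic type and the multiplicity of every summand. The delicate configurations are those where a summand could a priori be of complex type, or where two a priori distinct real-type summands could turn out isomorphic and pair up; it is exactly these borderline cases that isolate the $A_3$, $B_2$, $G_2$, $C_l$ ($l$ even) and $D_l$ ($l\ge 4$) maximal flags together with the listed intermediate flags of type $B_3$, $C_l$ and $D_4$, and they demand the case-by-case computation carried out in the body of the paper.
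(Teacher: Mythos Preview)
Your reduction via the commutant algebra is correct and is a cleaner abstract criterion than what the paper actually uses: an element $J\in\End_{K_\Theta}(\mathfrak m)$ with $J^2=-\mathrm{id}$ exists precisely when every real-type isotypic component occurs with even multiplicity. The paper, however, does not argue this way. It proceeds in two stages: first it imposes only $M$-invariance, which by Proposition~\ref{propminvar} is equivalent to every $M$-equivalence class in $\Pi^-\setminus\langle\Theta\rangle^-$ having even cardinality, and this already produces the short list in Table~\ref{table.Minv}; then, for each surviving $\Theta$, it checks the additional condition $[\mathrm{ad}_X,J]=0$ for $X\in\mathfrak k_\Theta$ by hand, either writing down an explicit $J$ or producing a concrete obstruction (for instance, a $K_\Theta$-irreducible subspace whose intersection with an $M$-class subspace is a line). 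So the paper never computes the Schur type of the $K_\Theta$-irreducibles, and it does not phrase the obstruction as ``real type with odd multiplicity.''

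What each approach buys: the paper's $M$-filter is cheap (root combinatorics only) and dramatically cuts down the list before any serious representation theory is needed; moreover, the explicit construction of $J$ in the surviving cases is exactly what is reused to compute the Nijenhuis tensor for Theorem~\ref{teo2}. Your criterion is conceptually uniform, but to apply it you must determine, for every $\Theta$, the real/complex/quaternionic type of each $K_\Theta$-irreducible summand and its multiplicity, information that \cite{patrao2015isotropy} does not record directly. Your sketch is honest about this (``the main obstacle is precisely this last step''), but as written it defers the entire content of the classification to an unperformed case analysis; the paper's proof, by contrast, carries that analysis out explicitly in Sections~3 and~4. In short: your strategy is sound and genuinely different in organization, but the proposal stops where the real work begins.
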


The next step is to check which of the existing almost complex structures
are integrable. By making computations with the Nijenhuis tensor we arrive
at the following result.

\begin{theorem}\label{teo2}
A real flag manifold $\F_\Theta=K/K_\Theta$ admits $K$-invariant complex structures if and only if it is of type $C_l$ and $\Theta=\{\l_d-\l_{d+1},\ldots,\l_{l-1}-\l_l,2\l_l\}$ with $d>1$, $d$ odd.
\end{theorem}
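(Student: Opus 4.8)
The plan is to start from the finite list of flag manifolds furnished by Theorem~\ref{teo1} and, on each of them, decide integrability by the Newlander--Nirenberg theorem: a $K$-invariant almost complex structure $J$ is integrable if and only if its Nijenhuis tensor $N$ vanishes identically. Since $J$ is $K$-invariant, so is $N$, and it is therefore enough to evaluate it at the origin $b_0=eK_\Theta$, where it becomes a skew bilinear map $\mathfrak{m}\times\mathfrak{m}\to\mathfrak{m}$ for the reductive decomposition $\mathfrak{k}=\mathfrak{k}_\Theta\oplus\mathfrak{m}$. For a split real form $\mathfrak{m}$ is spanned by the vectors $A_\alpha=X_\alpha-X_{-\alpha}$ with $\alpha\in\Pi^{+}\setminus\langle\Theta\rangle$, grouped into the $K_\Theta$-irreducible isotropy components described in \cite{patrao2015isotropy}, and an invariant $J$ is exactly a choice of invariant complex structure on these components (including the pairings between isomorphic ones). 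On such vectors $N(X,Y)=[JX,JY]_{\mathfrak{m}}-[X,Y]_{\mathfrak{m}}-J[JX,Y]_{\mathfrak{m}}-J[X,JY]_{\mathfrak{m}}$, and because $[A_\alpha,A_\beta]$ is an explicit linear combination of $A_{\alpha+\beta}$ and $A_{\alpha-\beta}$ (the coefficients being the structure constants $m_{\alpha,\beta}$ of $\mathfrak{g}$, modulo a correction in $\mathfrak{k}_\Theta\oplus\mathfrak{a}$ which one discards), the entire question reduces to a finite combinatorial computation with roots.

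The negative part of the statement is the bulk of the work: one must show that none of the invariant almost complex structures on the maximal flags of types $A_3$, $B_2$, $G_2$, $C_l$ ($l$ even) and $D_l$ ($l\ge 4$), nor on the intermediate flag of type $B_3$ with $\Theta=\{\lambda_1-\lambda_2,\lambda_2-\lambda_3\}$, nor on the three intermediate flags of type $D_4$, is integrable. The invariant structures on each of these are described explicitly in the proof of Theorem~\ref{teo1}, so for each of them I would exhibit a pair of roots $\alpha,\beta$ for which $\alpha+\beta$ (or $\alpha-\beta$) is again a root outside $\langle\Theta\rangle$ and the corresponding component of $N(A_\alpha,A_\beta)$ is a nonzero multiple of $m_{\alpha,\beta}$; when there is a continuum of invariant $J$'s this amounts to checking that a certain polynomial in the parameters of $J$ is not identically zero. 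Conceptually this is the real-form shadow of the classical fact that on a complex maximal flag manifold the only integrable invariant almost complex structures are the ``canonical'' ones attached to a choice of positive system, these being unavailable here because the action of $M$ forces the isotropy components to be paired across sign changes of the roots.

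It then remains to construct a genuine invariant complex structure on the type $C_l$ intermediate flag with $\Theta=\{\lambda_d-\lambda_{d+1},\dots,\lambda_{l-1}-\lambda_l,2\lambda_l\}$, $d>1$ odd. Here the roots outside $\langle\Theta\rangle$ are precisely those involving at least one of the coordinates $1,\dots,d-1$; since $d-1$ is even these coordinates split into pairs, and I would define $J$ by the induced pairing of the isotropy components and verify directly that every $N(A_\alpha,A_\beta)$ vanishes. The parity $d$ odd is exactly what makes this pairing well defined and compatible with the brackets, which is why it is also the condition already singled out in Theorem~\ref{teo1} (for $d$ even the flag is in fact odd-dimensional). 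Equivalently one can identify this $\mathbb{F}_\Theta$, via the isomorphism of the maximal compact subgroup of $Sp(l,\mathbb{R})$ with a unitary group, with a complex flag manifold of $\mathfrak{gl}$-type and import the known invariant complex structure from there. The main obstacle is the bookkeeping in the negative cases: organizing the root-and-structure-constant computation so that it does not break up into a long separate analysis for each Lie-algebra type, and likewise, in the positive case, describing the pairing (or the complex-geometric model) cleanly enough that the vanishing of $N$ is transparent.
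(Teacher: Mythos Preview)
Your overall strategy---reduce to the list from Theorem~\ref{teo1} and compute the Nijenhuis tensor at the origin, case by case---is exactly what the paper does, and your outline of the negative cases is accurate.

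The gap is in your construction for the positive $C_l$ case. The $K_\Theta$-irreducible isotropy components here are $W_R$ (roots $\lambda_i\pm\lambda_j$ and $2\lambda_i$ with all indices $\le d-1$, on which $K_\Theta$ acts trivially) together with the subspaces $W_i=\bigoplus_{j\ge d}(\mathfrak g_{\lambda_i-\lambda_j}\oplus\mathfrak g_{\lambda_i+\lambda_j})$ for $i=1,\dots,d-1$. These $W_i$ are pairwise \emph{inequivalent} as $K_\Theta$-modules (\cite[Lemma 5.11]{patrao2015isotropy}), so no $K_\Theta$-invariant $J$ can exchange $W_i$ with $W_j$ for $i\neq j$: your ``split the coordinates $1,\dots,d-1$ into pairs and pair the induced isotropy components'' does not produce an invariant structure. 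The parity condition $d$ odd does not serve to enable such a pairing; it enters only through the single $M$-equivalence class $\{2\lambda_1,\dots,2\lambda_{d-1}\}$ of long roots, which must be even-dimensional for any $M$-invariant $J$ to exist.

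The construction that actually works is the one you mention second, and it is the paper's: under the isomorphism $\mathfrak k\simeq\mathfrak u(l)$, each $W_i$ becomes a complex subspace, and an invariant $J$ on $W_i$ is forced to be $J A_{ki}=\varepsilon_i S_{ki}$, i.e.\ multiplication by $\pm i$ (one sign $\varepsilon_i$ per $i$, not per pair of coordinates). On $W_R$ (where $K_\Theta$ acts trivially) $M$-invariance allows an arbitrary complex structure on the long-root span and arbitrary $2\times 2$ structures on each $\mathfrak g_{\lambda_i-\lambda_j}\oplus\mathfrak g_{\lambda_i+\lambda_j}$; integrability then forces these $2\times 2$ blocks to be $\pm\begin{psmallmatrix}0&-1\\1&0\end{psmallmatrix}$ as well, with sign constraints on triples (Lemma~\ref{propes}). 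So the integrable structures are \emph{not} the ones coming from a coordinate pairing; they come from the ambient unitary structure, exactly as in your alternative description via $U(l)/U(l-d+1)$. Replace the pairing construction by this one and your plan goes through.
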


These complex flag manifolds are realized as manifolds of flags $\left( V_{1}\subset
\cdots \subset V_{k}\right) $ of subspaces of $\mathbb{R}^{2l}$ that are
isotropic with respect to the standard symplectic form of $\mathbb{R}^{2l}$. Moreover $\F_\Theta$ is finitely covered by $U(l)/U(l-d)$ and the complex structures on $\F_\Theta$ can be lifted to this covering space.
 
To prove the results above we mainly use the isotropy decomposition of $T_{b_\Theta}\F_\Theta$, the tangent space of the flag a the
origin $b_\Theta$. In \cite{patrao2015isotropy} there are described the $K_\Theta$-invariant and irreducible components of this representation obtaining a decomposition
\begin{equation*}
T_{b_{\Theta }}\mathbb{F}_{\Theta }=V_{1}\oplus \ldots \oplus V_{k}.
\end{equation*}%
This decomposition is essential to find $K$-invariant geometries on $\mathbb{F}_{\Theta }$. It is well known that the compact isotropy group is a product $K_\Theta=M(K_\Theta)_0$ where $M$ is the isotropy of the maximal flag and $(K_\Theta)_0$ the connected component of the identity. An almost complex structure commutes with the isotropy representation of $K_\Theta$ if and only if it commutes with the $M$ and $(K_\Theta)_0$ representations on the  tangent space. This allows us to split the proofs  in two stages: study $M$-invariance on the one hand, and the condition of commutativity with $\ad_X$ for all $X\in \mk_\Theta = \mathrm{Lie}(K_\Theta)$, on the other hand.

A necessary and sufficient condition for a real flag to admit $M$-invariant almost
complex structures is that every $M$-equivalence class on $\Pi ^{+}\setminus
\langle \Theta \rangle^+ $ has an even number of elements. Two roots $\alpha $
e $\beta $ lie in the same $M$-equivalence class if the representations of $%
M $ on $\mathfrak{g}_{\alpha }$ and $\mathfrak{g}_{\beta }$ are equivalent.
This condition is necessary for $\F_\Theta$ to admit $K_\Theta$ invariant almost complex structures, so by inspection of these equivalence classes we discard many flags manifolds. For the remaining cases
we focus on the $\mk_\Theta$ representation on $T_{b_\Theta}\F_\Theta$. We should remark that in all cases we give the almost complex structures explicitly, in a constructive way. Integrability is proved by computing the Nijenhuis tensor.

It is worth stressing a main difference in the isotropy representation of $%
K_{\Theta}$ between the real case and the complex case. In the real flag,
occur cases where two $K_{\Theta}$-invariant and irreducible components are
equivalent. In the complex case this fact does not occur. Consequently, on
the complex case, the $K_{\Theta}$-invariant and irreducible components, in
the isotropy representation of $\mathbb{F}_{\Theta}$, are invariant by
almost complex structures. On the real flag, occur cases where $JV_i=V_j$,
for $V_i$ and $V_j$ equivalent $K_{\Theta}$-invariant and irreducible
components.

This work is organized of following manner. In Section 2 we fix  notations and present the first results on existence of $M$-invariant complex structures. We give necessary and sufficient conditions for a flag manifold to admit such structure. In the case of a maximal flag, that is $\Theta=\emptyset$, this is all we need to pursue our study since $K_\Theta=M$. Section 3 focuses in this case. Section 4 deals with intermediate flags, that is $\Theta\neq \emptyset$. We only consider those intermediate flags verifying the necessary condition of Section 2. The full  comprehension of the isotropy representation of $K_\Theta$ is needed, so we fully describe it for the cases under study. The propositions in Sections 3 and 4 account to Theorems \ref{teo1} and \ref{teo2} above.

\section{Notation and preliminary results}

We refer to \cite{sangrupos,knapp2013lie} for further developments of the concepts in this section. We assume throughout the paper that $\mathfrak{g}$ is the split real form of a complex simple Lie algebra $\mathfrak{g}_{\mathbb{C}}$.
If $\mathfrak{g}=\mathfrak{k}\oplus \mathfrak{a}%
\oplus \mathfrak{n}$ is an Iwasawa decomposition then $\mathfrak{a}$ is a
Cartan subalgebra. Denote $\Pi$ the set of roots of $\mgg$ associated to $\ma$. If $\alpha \in \mathfrak{a}^{\ast }$ is a root then we write 
\begin{equation*}
\mathfrak{g}_{\alpha }=\{X\in \mathfrak{g}:\mathrm{ad}\left( H\right)
X=\alpha \left( H\right) X,~H\in \mathfrak{a}\}
\end{equation*}%
for the corresponding root space, which is one-dimensional since $\mg$ is split. Let $\Pi^+$ be a set of positive roots and $\Sigma$ the corresponding positive simple roots.

The set of parabolic Lie subalgebras of $\mg$ is parametrized by the subsets of simple roots $\Sigma$. Given $\Theta\subset \Sigma$, the corresponding parabolic subalgebra is given by 
$$\mpp_\Theta=\ma\oplus \sum_{\alpha \in \Pi^+} \mg_\alpha \oplus \sum_{\alpha\in \lela\Theta\rira^-}\mg_\alpha=\ma \oplus \sum_{\alpha \in \lela\Theta\rira^+\cup \lela\Theta\rira^-} \mg_\alpha \oplus \sum_{\alpha\in \Pi^+\bs\lela\Theta\rira^+}\mg_\alpha$$ where $\lela\Theta\rira^\pm$ is the set of positive/negative roots generated by $\Theta$. 

Denote by $G$ the group of inner automorphisms of $\mg$, which is connected and generated by $\exp \ad(\mgg)$ inside $\GL(\mg)$. Let $K$ be the maximal compact subgroup of $G$, then $K$ is generated by $\ad(\mk)$. The standard parabolic subgroup $P_\Theta$ of $G$ is the normalizer of $\mpp_\Theta$ in $G$. The associated flag manifold is defined by $\F_\Theta=G/P_\Theta$. The compact subgroup $K$ acts transitively on $\F_\Theta$ so we obtain $\F_\Theta=K/K_\Theta$ where $K_\Theta=K\cap P_\Theta$. Fixing an origin $b_\Theta$ in $\F_\Theta$ we identify the tangent space $T_{b_\Theta}\F_\Theta$ with the nilpotent Lie algebra 
$$\mn_\Theta^-= \sum_{\alpha\in \Pi^-\bs\lela\Theta\rira^-}\mg_\alpha.$$

In $\mn^-$, the isotropy representation of $K_\Theta$ on  $T_{b_\Theta}\F_\Theta$ is just the adjoint representation, since $\mn^-_\Theta$ is normalized by $K_\Theta$. The Lie algebra $\mk_\Theta$ of $K_\Theta$ is $$\mk_\Theta=\sum_{\alpha \in \lela\Theta\rira^+\cup \lela\Theta\rira^-} (\mg_\alpha\oplus\mgg_{-\alpha})\cap \mk.$$ Compactness of $K$ implies that $\mk_\Theta$ admits a reductive complement $\mk_\Theta$ so that $\mk=\mk_\Theta\oplus \mm_\Theta$ and $T_{b_\Theta}\F_\Theta$ is identified also with $\mm_\Theta$. The map $X_\alpha\lra X_{\alpha}-X_{-\alpha}$ for $\alpha \in \Pi^-\bs \lela\Theta\rira^-$ is a $K_\Theta$ invariant map from $\mn_\Theta^-$ to $\mm_\Theta$. Along the paper we will call isotropy representation either the representation of $K_\Theta$ on $\mn_\Theta^-$ or on $\mm_\Theta$, without making any difference or special mention. In some cases we will even use $\mn_\Theta^+$ instead of $\mn_\Theta^-$.

Let $M$ be the centralizer of $\ma$ in $K$. Then $K_\Theta=M\cdot (K_\Theta)_0$ where $(K_\Theta)_0$ is the connected component of the identity of $K_\Theta$. Thus $M$ acts on $T_{b_{\Theta }}\mathbb{F}_{\Theta }$ by restricting the isotropy representation of $K_\Theta$. The group $M$ is finite and acts on $\mn_\Theta^-$ leaving each root space $\mg_\alpha$ invariant. Moreover if $m\in M$ and $X\in \mgg_\alpha$ then $\mathrm{Ad}(m)X=\pm X$. Two roots $\alpha$ and $\beta$ are called $M$-equivalent, which we will
denote by $\alpha\sim_M \beta$, if the representations of $M$ on the root
spaces $\mg_\alpha$ and $\mg_\beta$ are equivalent. The $M$-equivalence classes were described in \cite{patrao2015isotropy}.

When $\Theta=\emptyset$, we drop all the sub indexes $\Theta$. The associated flag manifold is the maximal flag $\F=K/M$ and the tangent space at the origin $b$ will be identified with $\mn^-$.\smallskip


Let $U$ be a group of linear maps of the vector space $V$. A subspace $W\subset U$ is $U$-invariant if $ux\in W$ for all $x\in W$. A complex structure on $V$ is endomorphism $J:V\lra V$ such that $J^2=-1$ and it is said to be $U$-invariant if $uJ=Ju$ for all $u\in U$. We shall prove two technical results.

\begin{lemma}
\label{lem1} Let $W\subset V$ be a $U$-invariant space. Then the following statements are true:

\begin{enumerate}
\item $JW$ is $U$-invariant as well.
\item $W$ is irreducible if and only if $JW$ is irreducible.
\item The representations of $U$ on $W$ and $JW$ are equivalent.
\item\label{Wirred} If $W$ is irreducible then either $W\cap JW=\{0\}$ or $JW=W$.
\item If $\dim W=1$ then $W\cap JW=\{0\}$.
\end{enumerate}
\end{lemma}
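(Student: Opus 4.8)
The plan is to prove the five statements in order, since each is essentially a short formal consequence of the definitions of invariance, irreducibility, and equivalence, together with the fact that $J$ is a $U$-equivariant isomorphism.

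For item (1), I would argue that since $J$ commutes with every $u\in U$ and $W$ is $U$-invariant, for any $x\in W$ we have $u(Jx) = J(ux) \in JW$, so $JW$ is $U$-invariant. For item (2), I would observe that $J\colon W\to JW$ is a $U$-equivariant linear isomorphism (it is injective because $J^2=-1$ forces $J$ to be invertible), hence it carries $U$-invariant subspaces of $W$ bijectively onto $U$-invariant subspaces of $JW$; therefore $W$ has a proper nonzero invariant subspace if and only if $JW$ does, which is exactly the equivalence of irreducibility. Item (3) is immediate from the same remark: $J|_W\colon W\to JW$ is precisely an isomorphism of $U$-representations, so the two representations are equivalent. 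None of these steps presents any real obstacle; the only thing to be careful about is noting explicitly that $J$ is invertible so that $J|_W$ is an isomorphism and not merely a morphism.

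For item (4), suppose $W$ is irreducible and $W\cap JW\neq\{0\}$. By item (1), $JW$ is $U$-invariant, hence $W\cap JW$ is a nonzero $U$-invariant subspace of the irreducible space $W$, forcing $W\cap JW = W$, i.e. $W\subseteq JW$. Applying $J$ and using $J^2=-1$ (so $JW$ is also irreducible by item (2), and $J(JW)=W$) gives $JW\subseteq W$ by the symmetric argument, hence $JW=W$. Alternatively one can simply note $\dim W = \dim JW$ (as $J$ is an isomorphism) together with $W\subseteq JW$ to conclude $W=JW$. For item (5), if $\dim W = 1$ then $W$ is automatically irreducible, so by item (4) either $W\cap JW=\{0\}$ or $JW=W$; but $JW=W$ is impossible because a one-dimensional real subspace cannot be $J$-invariant — if $JW=W$ then $J$ restricts to an endomorphism of a one-dimensional real space, hence acts as a scalar $c$, and $J^2=-1$ would give $c^2=-1$ with $c\in\R$, a contradiction. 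Hence $W\cap JW=\{0\}$.

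The main (and only) point requiring attention is the reality argument in item (5): the conclusion genuinely uses that $V$ is a real vector space and $J$ a genuine complex structure (square $-1$, no real eigenvalues), so the dimension-one case behaves differently from what one might expect over $\C$. Everything else is routine diagram-chasing with equivariant isomorphisms, and I would present it compactly in that order.
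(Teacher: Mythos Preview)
Your proof is correct and follows essentially the same approach as the paper: both use the $U$-equivariance of $J$ directly for items (1)--(3), the irreducibility of $W$ applied to the invariant subspace $W\cap JW$ for item (4), and the absence of real eigenvalues of $J$ for item (5). Your treatment of items (4) and (5) is slightly more explicit than the paper's (e.g., you justify the reverse inclusion $JW\subseteq W$ via a dimension count, and spell out the scalar contradiction in item (5)), but the arguments are the same in substance.
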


\begin{proof}
Take $u\in U$ and $x\in W$. Then, $uJx=Jux\in JW$ showing that $JW$ is $U$%
-invariant.

Suppose that $W$ is irreducible and let $A\subset JW$ be a $U$-invariant
subspace. Then $J^{-1}A=JA\subset W$ is also $U$-invariant. Hence, $JA=W$ or 
$JA=\{0\}$, which implies that $A=JW$ or $A=\{0\}$. Thus $JW$ is irreducible.

As $J$ commutes with the elements of $U$, the map $J:W\rightarrow JW$
intertwines the representations on $W$ and $JW$ so that they are equivalent.
Since $W\cap JW\subset W$ is $U$-invariant and $W$ is irreducible we get
item 4. Finally $W\cap JW=\{0\}$ if $\dim W=1$  because the eigenvalues of $%
J$ are $\pm i$ hence $W$ is not invariant by $J$.
\end{proof}

\begin{lemma}\label{lem2} Let $W_i$, $i=1,2$ be $U$-invariant and irreducible subspaces of $V$ such that $W_1\cap W_2=0$ and the representations of $U$ on $W_1$ is not equivalent to that on $W_2$. If $V=W_1\oplus W_2\oplus W$ for some complementary subspace $W$ and $J$ is a $U$-invariant complex structure, then $Jw_1\in W_1\oplus W$ for all $w_1\in W_1$.
\end{lemma}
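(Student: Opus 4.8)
The plan is to decompose $Jw_1$ according to the direct sum $V = W_1 \oplus W_2 \oplus W$ and then show the $W_2$-component vanishes, using the hypothesis that $W_1$ and $W_2$ carry inequivalent irreducible representations of $U$. First I would define the projection $\pi_2 \colon V \to W_2$ along $W_1 \oplus W$; since $W_2$ is $U$-invariant but $W_1$ and $W$ need not be (only $W_1$ is assumed invariant), I instead work with the map $\phi \colon W_1 \to W_2$ obtained by composing $J|_{W_1}$ with $\pi_2$, i.e.\ $\phi(w_1) = \pi_2(Jw_1)$. The key point is that $\phi$ is $U$-equivariant: for $u \in U$ and $w_1 \in W_1$, we have $J(uw_1) = u(Jw_1)$ because $J$ is $U$-invariant, and applying $\pi_2$ and using that $\pi_2$ commutes with the $U$-action (which holds because $W_2$ is $U$-invariant and, crucially, because $u$ maps $W_1$ into $W_1$ so the $W_2$-component of $uw_1$ is zero) gives $\phi(uw_1) = u\,\phi(w_1)$. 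Here I should be a little careful: to conclude $\pi_2 \circ u = u \circ \pi_2$ on $W_1$ it suffices that $u(W_1) \subseteq W_1$ and $u(W_2) \subseteq W_2$, both of which hold; the behaviour of $u$ on $W$ is irrelevant since we only feed elements of $W_1$ into the map.

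Next I would invoke Schur's lemma. The map $\phi \colon W_1 \to W_2$ is a morphism of $U$-representations between two irreducible representations that are not equivalent, hence $\phi = 0$. Therefore $\pi_2(Jw_1) = 0$ for every $w_1 \in W_1$, which means exactly that $Jw_1 \in W_1 \oplus W$, as claimed.

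The only genuine subtlety — and the step I expect to require the most care in writing — is the equivariance of $\phi$, specifically justifying that the projection onto $W_2$ intertwines the $U$-action when restricted to $W_1$. This needs the two invariance facts $u(W_1)\subseteq W_1$ and $u(W_2)\subseteq W_2$ but, notably, not invariance of $W$; writing out $uw_1 = a + b + c$ with $a \in W_1$, $b \in W_2$, $c \in W$, the invariance of $W_1$ forces $b = c = 0$, so $\pi_2(u w_1) = 0 = u\,\pi_2(w_1)$ trivially on $W_1$, and separately $\pi_2(J u w_1) = \pi_2(u J w_1)$; combining with $U$-invariance of $W_2$ applied to the element $Jw_1$ written in components finishes it. Everything else is a direct application of Lemma~\ref{lem1} (for the irreducibility of the relevant pieces) and Schur's lemma, so the proof is short.
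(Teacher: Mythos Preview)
Your approach is exactly the paper's: form $\phi = \pi_2 \circ J|_{W_1} \colon W_1 \to W_2$, argue it is a $U$-morphism between inequivalent irreducibles, and conclude $\phi = 0$ by Schur. The paper's proof is two sentences to this effect and does not pause over the equivariance of the projection.

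You correctly flag a subtlety and then argue past it incorrectly. The equivariance you need is $\pi_2(u\,x) = u\,\pi_2(x)$ for $x = Jw_1$, \emph{not} for $x = w_1$; the element $Jw_1$ will in general have a nonzero $W$-component $c$, and if $W$ is not $U$-invariant then $uc$ can acquire a nonzero $W_2$-component, breaking the identity. Your closing line (``$U$-invariance of $W_2$ applied to $Jw_1$ written in components'') controls only the $W_2$-component of $Jw_1$, not its $W$-component. In fact the lemma as stated is false: take $V=\mathbb{R}^4$ with $U=\{\pm 1\}$ acting by $-1$ on $e_1,e_3$ and trivially on $e_2,e_4$; set $W_1=\langle e_1\rangle$, $W_2=\langle e_2\rangle$ (inequivalent irreducibles), define $J$ by $Je_1=e_3$, $Je_3=-e_1$, $Je_2=e_4$, $Je_4=-e_2$ (a $U$-invariant complex structure), and choose $W=\langle e_2+e_3,\,e_4\rangle$. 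Then $Je_1=e_3\notin W_1\oplus W$. The fix is to add the hypothesis that $W$ is $U$-invariant --- which holds in every application of the lemma in the paper --- after which both your argument and the paper's go through without difficulty.
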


\begin{proof}
Consider $P:V\lra W_2$ the projection map with respect to the decomposition above. The map $P\circ J:W_1\lra W_2$ is $U$-invariant and bijective if non-zero, since its domain and target spaces are irreducible. Thus it is an equivalence between the representations of $U$, if non-zero. Therefore, $P\circ J\equiv 0$ and the result follows.
\end{proof}
Under the of the lemma above, in the particular case of $V=W_1\oplus W_2$ we have $JW_i=W_i$, $i=1,2$.

\smallskip
From the general theory of invariant tensors on homogeneous manifolds we know that $K$-invariant almost complex structure on the flag manifold $\F_\Theta=K/K_\Theta$ are in one to one correspondence with $K_\Theta$-invariant complex structures $J:T_{b_{\Theta }}\mathbb{F}_{\Theta
}\rightarrow T_{b_{\Theta }}\mathbb{F}_{\Theta }$. Recall that $ T_{b_{\Theta }}\mathbb{F}_{\Theta }$ identifies with $\mn_\Theta^-$ (or $\mm_\Theta$) and this identification preserves the $K_\Theta$ representation. So $K$-invariant almost complex structures on $\F_\Theta$ also correspond to $K_\Theta$-invariant complex structures on $\mn_\Theta^-$. 

Let $J:\mn_\Theta^-\lra \mn_\Theta^-$ be a complex structure and assume it is only $M$-invariant. Since $K_\Theta=M(K_\Theta)_0$ we have that $J$ is also  $K_\Theta$-invariant if and only if $J$ commutes with the elements in $(K_\Theta)_0$, or equivalently, $\ad_XJ=J\ad_X$ for all $X\in \mk_\Theta$ (because of connectedness).

\begin{proposition}
\label{propminvar} Let $\mathbb{F}_{\Theta }$ be a real flag manifold
associated to a split real form. Then a necessary and sufficient condition for the existence
of a $M$-invariant complex structure $J:T_{b_{\Theta }}\mathbb{F}_{\Theta
}\rightarrow T_{b_{\Theta }}\mathbb{F}_{\Theta }$ is that the number of
elements in each $M$-equivalence class $\left[ \alpha \right] $ in $\Pi
^{-}\setminus \langle \Theta \rangle ^-$ is even.

In this case the $M$-invariant complex structures are given by direct sums
of invariant structures on the subspaces $V_{\left[ \alpha \right]
}=\sum_{\beta \sim _{M}\alpha }\mathfrak{g}_{\beta }\subset \mathfrak{n}%
_{\Theta }^{-}$. In a subspace $V_{\left[ \alpha \right] }$ the set of $M$%
-invariant structures is parametrized by $\mathrm{Gl}(d,\mathbb{R})/\mathrm{%
Gl}(d/2,\mathbb{C})$, where $d=\dim V_{\left[ \alpha \right] }$.
\end{proposition}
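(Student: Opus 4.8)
The plan is to reduce the statement to an elementary fact about the $M$-isotypic decomposition of the tangent space, exploiting that in the split case every root space is a line. First I would record that, since $\mg$ is split, $\dim\mg_\beta=1$, so $M$ acts on $\mg_\beta$ through a character $\chi_\beta\colon M\to\{\pm1\}$ (this is the sign $\mathrm{Ad}(m)X=\pm X$ noted above, and it is a homomorphism because signs multiply), and two roots are $M$-equivalent exactly when $\chi_\alpha=\chi_\beta$, since equivalence of one-dimensional representations means equality of characters. Hence the decomposition of $\mn_\Theta^-$ into $M$-isotypic components is precisely
\[
\mn_\Theta^-=\bigoplus_{[\alpha]}V_{[\alpha]},\qquad
V_{[\alpha]}=\sum_{\substack{\beta\in\Pi^-\setminus\langle\Theta\rangle^-\\ \beta\sim_M\alpha}}\mg_\beta ,
\]
the sum taken over the distinct $M$-equivalence classes in $\Pi^-\setminus\langle\Theta\rangle^-$, and $\dim V_{[\alpha]}=\#[\alpha]$. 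The observation that drives the whole proof is that $M$ acts on $V_{[\alpha]}$ by the single scalar $\chi_\alpha(m)=\pm1$, so that \emph{every} $\R$-linear endomorphism of $V_{[\alpha]}$ is $M$-equivariant.

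Next I would show that any $M$-invariant complex structure $J$ on $\mn_\Theta^-$ is block diagonal for this decomposition; this is the concrete form of Lemma \ref{lem2}. Given $w\in V_{[\alpha]}$, write $Jw=\sum_{[\beta]}v_{[\beta]}$ with $v_{[\beta]}\in V_{[\beta]}$; applying $m\in M$ and using $m w=\chi_\alpha(m)w$ yields $\chi_\alpha(m)\,Jw=\sum_{[\beta]}\chi_\beta(m)v_{[\beta]}$, hence $(\chi_\beta(m)-\chi_\alpha(m))v_{[\beta]}=0$ for all $m\in M$, and when $[\beta]\neq[\alpha]$ the characters differ at some $m$, forcing $v_{[\beta]}=0$. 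Thus $JV_{[\alpha]}\subseteq V_{[\alpha]}$, with equality because $J$ is invertible, so $J|_{V_{[\alpha]}}$ is a complex structure on $V_{[\alpha]}$. Conversely, by the scalar observation any direct sum of complex structures chosen on the $V_{[\alpha]}$ is automatically $M$-invariant and squares to $-1$. Therefore $M$-invariant complex structures on $\mn_\Theta^-$ correspond bijectively to families of (arbitrary) complex structures, one on each $V_{[\alpha]}$.

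It then only remains to decide when $V_{[\alpha]}$ admits a complex structure: a real vector space does iff its dimension is even, so existence of an $M$-invariant $J$ is equivalent to $\#[\alpha]=\dim V_{[\alpha]}$ being even for every class $[\alpha]$, which is the asserted criterion. For the count I would invoke the classical fact that $\mathrm{GL}(d,\R)$ acts transitively by conjugation on the set of complex structures of $\R^d$ (with $d$ even), with stabilizer of the standard structure the group of complex-linear maps $\mathrm{GL}(d/2,\C)$, so that this set is $\mathrm{GL}(d,\R)/\mathrm{GL}(d/2,\C)$, applied here with $d=\dim V_{[\alpha]}$. There is no genuine obstacle: the argument hinges entirely on the remark that $M$ acts by a scalar on each isotypic piece, which simultaneously forces a candidate $J$ to be block diagonal and kills every further constraint inside a block; the only point needing mild care is tracking which roots lie in $\Pi^-\setminus\langle\Theta\rangle^-$ when forming the classes, so that the $V_{[\alpha]}$ genuinely exhaust $\mn_\Theta^-$.
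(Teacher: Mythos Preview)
Your proposal is correct and follows essentially the same approach as the paper: both arguments rest on the observation that $M$ acts by the single scalar $\pm1$ on each isotypic block $V_{[\alpha]}$, which forces any $M$-invariant $J$ to be block diagonal and simultaneously makes every endomorphism of a block automatically $M$-equivariant. The only cosmetic difference is that the paper routes the block-diagonality step through its Lemmas~\ref{lem1} and~\ref{lem2}, whereas you give the direct character computation; the content is the same.
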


\begin{prova}
If $\alpha \in \Pi ^{-}\backslash \langle \Theta \rangle ^-$ then $\mathfrak{g}%
_{\alpha }\in \mathfrak{n}_{\Theta }^{-}$ and $\dim \mathfrak{g}_{\alpha }=1$
(because $\mathfrak{g}$ is a split real form). The subspace $J\mathfrak{g}%
_{\alpha }\subset \mathfrak{n}_{\Theta }^{-}$ is different of $\mathfrak{g}%
_{\alpha }$ by 5. in Lemma \ref{lem1} and the representation of $M$ in $J%
\mathfrak{g}_{\alpha }$ is equivalent to representation in $\mathfrak{g}%
_{\alpha }$. Lemma \ref{lem2} implies that $J\mathfrak{g}_{\alpha }$ is contained in the
subspace $V_{\left[ \alpha \right] }=\sum_{\beta \sim _{M}\alpha }\mathfrak{g%
}_{\beta }$. Applying the same argument to the roots $\beta $ that are $M$%
-equivalent to $\alpha $, we obtain $JV_{\alpha }=V_{\alpha }$. As $J^{2}=-1$, it follows that $\dim V_{\alpha }$ is even and, hence, the
number of roots $M$-equivalent to $\alpha $ is even. This proves that the condition is necessary.

To see the sufficiency take a $M$-equivalent class $[\alpha ]$ so that by
assumption the subspace $V_{[\alpha ]}=\sum_{\beta \sim _{M}\alpha }%
\mathfrak{g}_{\beta }$ is even dimensional. Given $m\in M$ we have $\mathrm{Ad}\left( m\right) X=\pm X$
if $X$ belongs to a root space $X\in \mathfrak{g}_{\beta }$. In this
equality the sign does not change when $\beta $ runs through a $M$%
-equivalence class. It follows that $\mathrm{Ad}\left( m\right) =\pm 1$ on $V_{[\alpha ]}$. Hence any complex structure on $V_{[\alpha ]}$ is $%
M $-invariant. Taking direct sums of complex structures on the several $V_{%
\left[ \alpha \right] }$ we get $M$-invariant complex structures on $%
T_{b_{\Theta }}\mathbb{F}_{\Theta }\simeq \mathfrak{n}_{\Theta }^{-}$.

Finally the set of complex structures in a $d$-dimensional real space ($d$
even) is parametrized by $\mathrm{Gl}(d,\mathbb{R})/\mathrm{Gl}(d/2,\mathbb{C%
})$.
\end{prova}

We use the results in \cite{patrao2015isotropy} to present in Table \ref{table.Minv} all possible subsets $\Theta\subset \Sigma$ for which the $M$-equivalence classes in $\Pi ^{-}\setminus \langle
\Theta \rangle^-$ have an even amount of elements. Even though we do not give the explicit computations to construct this table, we present the $M$-equivalence classes for some cases in the followings sections.

\begin{table}[h]
\centering
\begin{tabular}{|c|c|}
\hline
Type & $\Theta$\\
\hline
$A_3$& $\emptyset$\\
\hline
$B_2$ & $\emptyset$\\
 \hline
$B_3$ & $\{\lambda_1-\lambda_2,\lambda_2-\lambda_3\}$\\
\hline
$C_4$ & $\emptyset$,$\{\lambda_1-\lambda_2,\lambda_3-\lambda_4\}$,$\{\lambda_3-\lambda_4,2\lambda_4\}$\\
 \hline
$C_l$, $l\neq 4$ & $\emptyset$ only for $l$ even,  \\
&$\{\lambda_d-\lambda_{d+1},\cdots, \lambda_{l-1}-\lambda_{l},2\lambda_l\}$,  $1< d\leq l-1$, $d$ odd, for all $l$ \\
\hline
$D_4$ & $\emptyset$, $\{ \lambda_{1}-\lambda_{2},\lambda_{3}-\lambda_{4}\}$, $\{ \lambda_{1}-\lambda_{2},\lambda_{3}+\lambda_{4}\}$,\\
      &$\{ \lambda_{3}-\lambda_{4},\lambda_{3}+\lambda_{4}\}$, $\{\lambda_1-\lambda_{2}, \lambda_{2}-\lambda_{3},\lambda_{3}-\lambda_{4}\}$\\
      &  $\{\lambda_1-\lambda_{2}, \lambda_{2}-\lambda_{3},\lambda_{3}+\lambda_{4}\}$, $\{\lambda_2-\lambda_{3}, \lambda_{3}-\lambda_{4},\lambda_{3}+\lambda_{4}\}$\\
      \hline
$D_l$, $l\geq 5$ & $\emptyset$, $\{\lambda_d-\lambda_{d+1},\cdots, \lambda_{l-1}-\lambda_{l},\lambda_{l-1}+\lambda_{l}\}$, $1<d\leq l-1$.\\
\hline
$G_2$& $\emptyset$\\
\hline
\end{tabular}
\caption{$M$-equivalence classes in $\Pi ^{-}\setminus \langle
\Theta \rangle^- $ with even elements}
\label{table.Minv}
\end{table}

Complex structures on $\F_\Theta$ which are invariant under $K$ are induced by $K_\Theta$-invariant complex structures on the tangent space and, in particular, are $M$-invariant. Hence Proposition \ref{propminvar} and a simple inspection of Table \ref{table.Minv} give the following result.
\begin{proposition}
\label{corminvar} Let $\mathbb{F}_{\Theta }$ be a real flag manifold
associated to a split real form. If $\mathbb{F}_{\Theta }$ admits a $K$%
-invariant almost complex structure, then $\Theta$ is in Table \ref{table.Minv}.
\end{proposition}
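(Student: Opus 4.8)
The plan is to chain together three ingredients already in place: the dictionary between invariant almost complex structures and algebraic data, Proposition \ref{propminvar}, and the classification recorded in Table \ref{table.Minv}. First I would recall that, by the general theory of invariant tensors on homogeneous spaces quoted above, a $K$-invariant almost complex structure on $\F_\Theta=K/K_\Theta$ is the same datum as a $K_\Theta$-invariant complex structure $J:T_{b_\Theta}\F_\Theta\rightarrow T_{b_\Theta}\F_\Theta$, and $T_{b_\Theta}\F_\Theta$ is identified with $\mn_\Theta^-$ in a way that preserves the $K_\Theta$-representation. So, assuming $\F_\Theta$ admits a $K$-invariant almost complex structure, we obtain a $K_\Theta$-invariant complex structure $J$ on $\mn_\Theta^-$.

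Next, since $K_\Theta=M\,(K_\Theta)_0$, every $K_\Theta$-equivariant endomorphism of $\mn_\Theta^-$ is in particular $M$-equivariant; hence $J$ is an $M$-invariant complex structure on $\mn_\Theta^-$. Proposition \ref{propminvar} now applies: the mere existence of such a $J$ forces the number of roots in each $M$-equivalence class $[\alpha]$ contained in $\Pi^-\setminus\langle\Theta\rangle^-$ to be even.

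Finally I would invoke the description of the $M$-equivalence classes for the split real forms obtained in \cite{patrao2015isotropy}: going through each complex simple Lie algebra and each subset $\Theta\subset\Sigma$, the pairs for which all $M$-equivalence classes in $\Pi^-\setminus\langle\Theta\rangle^-$ have an even number of elements are precisely those listed in Table \ref{table.Minv}. Consequently $\Theta$ must occur in that table, which is the assertion. The only substantive work here is the case-by-case verification behind Table \ref{table.Minv} rather than the short logical chain above; since that verification rests on the computations of \cite{patrao2015isotropy}, I would simply cite it here and exhibit representative $M$-equivalence classes for the surviving cases in the sections that follow.
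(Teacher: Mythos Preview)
Your proposal is correct and follows exactly the same logical chain as the paper's own argument: $K$-invariance on $\F_\Theta$ gives $K_\Theta$-invariance on $\mn_\Theta^-$, hence $M$-invariance, hence by Proposition~\ref{propminvar} every $M$-class in $\Pi^-\setminus\langle\Theta\rangle^-$ has even cardinality, and Table~\ref{table.Minv} records precisely those $\Theta$. The paper states this in one sentence and likewise defers the case-by-case verification behind the table to \cite{patrao2015isotropy}.
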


An invariant complex structure $J:\mn_\Theta^-\lra \mn_\Theta^-$ induced is integrable if the Nijenhuis tensor vanishes, that is if
$$N_J(X,Y):=[JX,JY]-[X,Y]-J[JX,Y]-J[X,JY]=0,\; \mbox{ for all }X,Y \in \mn_\Theta^-. $$

\section{$K$-invariant complex structures on maximal flags}
For a maximal flag manifold the isotropy subgroup $\K_\Theta$ is the centralizer of $\ma$ inside $K$, that is, $\K_\Theta=M$. Hence Proposition \ref{propminvar} solves the question of existence of almost complex structures, remaining only integrability to be solved.  The main result of this section is the following.

\begin{proposition}\label{cormaxflag}
The maximal real flag $\F$ associated to a split real form admits a $K$-invariant almost complex structure if and only if $\F$ is of type $A_3$, $B_2$,  $G_2$, $C_l$ for  even $l$ and $D_l$ for $l\geq 4$. None of these structures is integrable.
\end{proposition}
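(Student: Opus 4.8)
\medskip
\noindent\emph{Plan.} The proof splits into existence and non-integrability. For existence there is nothing new to do: for a maximal flag $\Theta=\emptyset$, so $K_\Theta=M$, and a $K$-invariant almost complex structure on $\F$ is the same as an $M$-invariant complex structure $J$ on $\mn^-$; by Proposition \ref{propminvar} such a $J$ exists if and only if every $M$-equivalence class in $\Pi^-$ is even, and Table \ref{table.Minv} (the rows in which $\Theta=\emptyset$ occurs) shows this holds exactly for types $A_3$, $B_2$, $G_2$, $C_l$ with $l$ even, and $D_l$ with $l\geq4$. It remains to prove that none of these structures is integrable.

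For that I would proceed as follows. Fix such an $\F$ and an invariant $J$. By Proposition \ref{propminvar}, $J$ preserves each $V_{[\alpha]}=\sum_{\beta\sim_M\alpha}\mg_{-\beta}$ and restricts there to an arbitrary complex structure; on a size-two class $\{\beta,\gamma\}$, in the basis of root vectors $\{X_{-\beta},X_{-\gamma}\}$ the matrix of $J|_{V_{[\alpha]}}$ is $\left(\begin{smallmatrix}a & c\\ b & d\end{smallmatrix}\right)$ with $a+d=0$ and $ad-bc=1$ (so $bc=-1-a^2$ and, in particular, $b,c\neq0$). Using $[\mg_\mu,\mg_\nu]\subseteq\mg_{\mu+\nu}$, I would then, in each type, pick one or two pairs $X=X_{-\beta}$, $Y=X_{-\gamma}$ with $\beta+\gamma\in\Pi^+$ chosen so that only a handful of terms of $N_J(X,Y)=[JX,JY]-[X,Y]-J[JX,Y]-J[X,JY]$ survive, compute those components in the root basis, and show the equations ``$N_J(X,Y)=0$'' have no solution in the entries of $J$.

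Concretely I expect three patterns to cover all the cases. (i) \emph{$C_l$ with $l$ even} (this includes $B_2=C_2$ and $C_4$): $J$ preserves the class of long roots $\{2\lambda_1,\dots,2\lambda_l\}$ and the size-two class $\{\lambda_1-\lambda_2,\lambda_1+\lambda_2\}$; taking $X=X_{-2\lambda_2}$ and $Y=X_{-(\lambda_1-\lambda_2)}$, the only surviving bracket is $[X,Y]\propto X_{-(\lambda_1+\lambda_2)}$ together with its $J$-images, so $N_J(X,Y)$ lies in the size-two block, and vanishing of one coordinate forces the $X_{-2\lambda_2}$-component of $JX$ to equal $-a$ while the other coordinate then becomes a nonzero multiple of $1+a^2>0$ --- impossible; this is uniform in $l$. (ii) \emph{$A_3$, and $D_l$ with $l\geq4$}: for $A_3$, realizing $\mn^-$ as the strictly lower-triangular $4\times4$ matrices (matrix units $E_{ij}$) with $M$-classes $\{E_{21},E_{43}\}$, $\{E_{31},E_{42}\}$, $\{E_{41},E_{32}\}$, the pair $(E_{21},E_{42})$ forces via $N_J=0$ several relations among the entries of $J$ on the three blocks (in particular $(1,1)$-entries of two blocks coincide and a product of off-diagonal entries equals $-1-a^2$), and then $N_J(E_{21},E_{32})$ has a component equal to $2(1+a^2)\neq0$; for $D_l$ the structure and argument are analogous since $D_3\cong A_3$, and for $l\geq5$ one may localize to the closed subsystem of type $D_3$ on three of the coordinates, whose negative root spaces form a union of $M$-classes, hence a $J$-invariant subalgebra on which $N_J$ coincides with the Nijenhuis tensor of an $A_3$-type structure, while $D_4$ is settled by the same direct computation. (iii) \emph{$G_2$}: a rank-two case dispatched by a direct Nijenhuis computation on one adapted pair of root vectors.

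The hard part is not the arithmetic but two bookkeeping issues: correctly identifying the $M$-equivalence classes in each type (which I take from \cite{patrao2015isotropy} and which dictate the admissible block form of $J$, hence which brackets can appear in $N_J$), and making the argument independent of $l$ for the families $C_l$ and $D_l$ --- handled by choosing the adapted pairs, respectively the auxiliary subalgebra, supported on a bounded number of roots, so that the infinite families collapse to finite computations. Once the block form of $J$ and the relevant structure constants are written down, the Nijenhuis computations are short, and the obstruction invariably reduces to the impossibility of $1+x^2=0$ for a real number $x$.
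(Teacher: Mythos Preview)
Your existence argument is identical to the paper's. Your non-integrability strategy is also the paper's: case-by-case Nijenhuis computations on well-chosen pairs of root vectors. Your reduction of $D_l$ ($l\geq5$) to a $D_3\cong A_3$ subalgebra is a genuinely different and elegant shortcut: since for $l\geq5$ each $M$-class is the pair $\{\lambda_i\pm\lambda_j\}$, the span of the negative root spaces for $i,j\in\{1,2,3\}$ is a $J$-invariant subalgebra isomorphic (as a nilpotent Lie algebra with a block-preserving $J$) to the $A_3$ situation, so the $A_3$ obstruction transfers. The paper instead does a direct $D_l$ computation using three roots and a Jacobi identity among structure constants; your route avoids that bookkeeping at the price of invoking the $A_3$ case as a lemma.

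There is, however, a real gap in your pattern (i). In $C_4$ the $M$-equivalence class of a short root is \emph{not} $\{\lambda_1-\lambda_2,\lambda_1+\lambda_2\}$; it is the four-element set $\{\lambda_1\pm\lambda_2,\lambda_3\pm\lambda_4\}$ (and similarly for the other short-root classes). So your ``size-two block'' form of $J$ on $\mg_{-(\lambda_1-\lambda_2)}\oplus\mg_{-(\lambda_1+\lambda_2)}$ is not available, and the clean contradiction ``one coordinate forces $b_2=-a$, then the other equals $1+a^2$'' does not go through as stated: $JX_{-(\lambda_1-\lambda_2)}$ has components along $X_{-(\lambda_3\pm\lambda_4)}$, and $[JX,JY]$ picks up an extra term $b_4\cdot(\cdot)\,X_{-(\lambda_3+\lambda_4)}$, so $N_J(X,Y)=0$ only yields a system of relations among the $4\times4$ block entries rather than an immediate $1+a^2=0$. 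The paper treats $C_4$ separately for exactly this reason, with a longer argument that first forces many off-diagonal entries of the $4\times4$ blocks to vanish (via several $N_J$-equations with long roots) and only then reaches a contradiction. Your uniform $C_l$ argument is fine for $l=2$ and $l\geq6$, but $C_4$ needs its own treatment; likewise your acknowledgment that $D_4$ is a separate direct computation is correct, since its short-root $M$-classes are also four-dimensional.
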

\begin{proof} 
By Proposition \ref{propminvar}, a maximal flag $\F $ admits an $M$-invariant almost complex structure if and only if it appears in Table \ref{table.Minv}. 

Recall that an almost complex structure $J:\mn^-\lra \mn^-$ is a sum of almost complex structures $J_{[\alpha]}:V_{[\alpha]}\lra V_{[\alpha]}$ for $\alpha\in \Pi^-$ gives an $M$-invariant almost complex structure in $\F$. We address integrability of these structures by fixing one of these $J:\mn^-\lra \mn^-$ and we study case by case.  

Notice that if $V_{[\alpha]}$ is two dimensional with basis $\mathcal B$, then the matrix of $J_{[\alpha]}$ in $\mathcal B$ is
\begin{equation}\label{eq.jota}
\left(\begin{matrix}
a&\frac{-(1+a^2)}{c}\\
c&-a
\end{matrix}\right),\mbox{ with} \, a,c\in \R, \,c\neq 0.
\end{equation}

\begin{itemize}
\item Case $A_3$.
The $M$-equivalence classes of negative roots are: 
\begin{equation*}
\{\lambda_2-\lambda_1,\lambda_4-\lambda_3\},\
\{\lambda_3-\lambda_1,\lambda_4-\lambda_2\}\ \mbox e \mbox \ %
\{\lambda_4-\lambda_1,\lambda_3-\lambda_2\}.
\end{equation*}

Thus for $i=2,3,4$, $\dim V_{[\l_{i}-\l_1]}=2$  and it is spanned by $\{E_{i1},E_{st}\}$ with $s>t$, $\{s,t\}\cap \{i,1\}=\emptyset$ and $\{s,t\}\cup \{i,1\}=\{1,\ldots,4\}$; here $E_{jk}$ is the $4\times 4$ matrix with 1 in the $jk$ entry and zero elsewhere. For $i=2,3,4$, let $a_i,c_i\in \R$ such that $J|_{V_{[\l_{i}-\l_1]}}$ in this basis has the following form
$$\left(\begin{matrix}
a_i&\frac{-(1+a_i^2)}{c_i}\\
c_i&-a_i
\end{matrix}\right),\quad c_i\neq 0.$$

Explicit computations give
\begin{eqnarray*}
N_J(E_{21},E_{31}) &=&(c_3-c_2)c_4E_{32}+(c_2a_3-a_2c_3+a_4(c_3-c_2))E_{41},\\
N_J(E_{21},E_{41}) &=&c_4(a_3-a_2)E_{31}+c_4(c_2+c_3) E_{42}.
\end{eqnarray*} These two equations cannot be zero simultaneously since $c_i\neq 0$. Thus the Nijenhuis does not vanish and $J$ is not integrable.

\item Case $B_2$. The $M$-equivalence classes of negative roots are 
\begin{equation*}
\{\lambda_2-\lambda_1,-\lambda_2-\lambda_1 \}\ \mbox e \mbox \ %
\{-\lambda_1,-\lambda_2\}.
\end{equation*}

Let $X_{21}$, $Y_{21}$, $X_{1}$ and $X_{2}$ be elements of a Weyl basis generating $\mathfrak{g}_{\lambda_2-\lambda_1}$, $\mathfrak{g}%
_{-\lambda_2-\lambda_1}$, $\mathfrak{g}_{-\lambda_1}$ and $\mathfrak{g}%
_{-\lambda_2}$, respectively.
Thus $J$ verifies
\begin{equation*}
\begin{array}{ll}
JX_{21} =a_{21}X_{21}+c_{21}Y_{21},  & JX_{1} =a_1X_1+c_{1}X_{2}, \\ 
JY_{21}=-(1+a_{21}^2)X_{21}/c_{21}-a_{21}Y_{21}, &JX_{2}= -(1+a_1^2)X_{1}/c_{1}-a_1X_2,
\end{array} 
\end{equation*}
 with $c_1,c_{21}\neq 0$. 
 
 Let $m=m_{\lambda_2-\lambda_1,-\lambda_2}\neq 0$  be the corresponding coefficient in the Weyl basis, that is, $[X_{21},X_2]=m X_{1}$. Then
\begin{equation*}
\begin{array}{ccl}
N_J(X_{21},X_{1}) & = & [JX_{21},JX_{1}]- [X_{21},X_{1}]-J[X_{21},JX_{1}]
-J[JX_{21},X_{1}] \\ 
& = & -m c_{1}^2X_{2} +mc_1(a_{21}-a_1)X_1 
\end{array}%
\end{equation*}
which is never zero since $mc_1^2\neq 0$. Therefore $J$ is not integrable.

\item Case $C_4$.  The $M$-equivalence classes are: 
\begin{eqnarray*}&
\{\pm\lambda_2-\lambda_1,\pm\lambda_4-\lambda_3\},
\{\pm\lambda_3-\lambda_1,\pm\lambda_4-\lambda_2\},
\{\pm\lambda_4-\lambda_1,\pm\lambda_3-\lambda_2\},&\\
&\{-2\lambda_i:\ i=1,\ldots, 4\}.&
\end{eqnarray*}
Notice that $\dim V_{[2\l_1]}=\dim V_{[\l_{i}-\l_1]}=4$ for $i=2,3,4$.
Let $(a_{ij})_{ij}$,  $(b_{ij})_{ij}$, $(c_{ij})_{ij}$ be the matrices corresponding to $J|_{V_{[\l_2-\l_1]}}$, $J|_{V_{[\l_3-\l_1]}}$, $J|_{V_{[\l_4-\l_1]}}$ respectively in a Weyl basis of $\mn^-$.

Then
$N_J(X_{-\l_2-\l_1},X_{-2\l_2})=0$ and $N_J(X_{-\l_4-\l_3},X_{-2\l_4})=0$ imply $a_{12}=a_{34}=0$ and moreover $a_{14}^2+a_{24}^2\neq 0$ because otherwise $X_{-\l_4-\l_3}$ would be an eigenvector of $J$.  Analogously we obtain $b_{12}=b_{34}=c_{12}=c_{34}=0$ and $b_{14}^2+b_{24}^2\neq 0$, $c_{14}^2+c_{24}^2\neq 0$.

With these conditions, $N_J(X_{-\l_2-\l_1},X_{-2\l_4})=0$ imply $a_{32}=0$ and $a_{42}\neq0$. Similar computations give $b_{32}=c_{32}=0$ and $b_{42}\neq 0$, $c_{42}\neq 0$. Now $J^2=-1$ imply $a_{14}=b_{14}=c_{14}=0$.

All this account to $N_J(X_{\l_2-\l_1},X_{-\l_3-\l_1})=0$ and $N_J(X_{\l_2-\l_1},X_{-\l_4-\l_1})=0$ only if, respectively,  $a_{31}=c_{42}$ and $a_{31}=-c_{42}$. This  clearly cannot hold since $c_{42}\neq 0$.

\item Case $C_l$, $l$ even and $l\geq 6$. The $M$-equivalence classes are 
\begin{equation*}
\{\pm\lambda_s-
\lambda_i\}, \;1\leq i<s\leq l,\ \mbox{ and }\ \{2\lambda_1,
\ldots,2\lambda_l\}.
\end{equation*}

Let $X_{si}$, $Y_{si}$ and $X_{j}$ be the
generators of the roots spaces $\mathfrak{g}_{\lambda_s-\lambda_i}$, $\mathfrak{g%
}_{-\lambda_s-\lambda_i}$ and $\mathfrak{g}_{-2\lambda_j}$,  respectively, corresponding to a Weyl basis. In this case we have $\dim V_{[\l_{s}-\l_i]}=2$ while $\dim V_{[2\l_1]}=l$, even. Thus $JX_{1} =\sum_{j=1}^l b_j X_{j}$ and for $s=1, \ldots,l$ we have
\begin{equation*}
JX_{s1} =a_{s1}X_{s1}+c_{s1}Y_{s1},  \quad
JY_{s1}=-\frac{(1+a_{s1}^2)}{c_{s1}}X_{s1}-a_{s1}Y_{s1}, \quad c_{s1}\neq 0 .
\end{equation*}

We compute the Nijenhuis tensor on the vectors $X_1$ and $X_{s1}$, for $s=2,\ldots,l$. Denote $m=m_{\l_s-\l_1,-2\l_s}\neq 0$, then we get 
\begin{eqnarray*}
N_J(X_{s1},X_{1}) & = & [JX_{s1},JX_{1}]- [X_{s1},X_{1}]-J[X_{s1},JX_{1}]
-J[JX_{s1},X_{1}] \\ 
&=& [a_{s1}X_{s1}+c_{s1}Y_{s1},\sum_{j=1}^l b_j X_{j}]-b_s m JY_{s1}\\
&=& a_{s1}b_s m Y_{s1}-b_s m (-\frac{(1+a_{s1}^2)}{c_{s1}}X_{s1}-a_{s1}Y_{s1})\\
&=&b_s m \frac{(1+a_{s1}^2)}{c_{s1}} X_{s1}+a_{s1}(b_sm+1) Y_{s1}.
\end{eqnarray*}
Hence $N_J(X_{s1},X_{1})=0$ if and only if $b_{s}m=0$. Thus $J$ integrable implies $b_s=0$ for $s=2,\ldots,l$. and therefore $JX_1=b_1X_1$, which contradicts the fact that $J^2=-1$. Thus $J$ is not integrable.

\item Case $D_4$. The $M$-equivalence classes are
\begin{equation*}
\{\pm\lambda_2-\lambda_1,\pm\lambda_4-\lambda_3\},
\{\pm\lambda_3-\lambda_1,\pm\lambda_4-\lambda_1\},
\{\pm\lambda_4-\lambda_1,\pm\lambda_3-\lambda_2\}.
\end{equation*}
Clearly, $\dim V_{[\l_{i}-\l_1]}=4$ for $i=2,3,4$. We proceed as in the $C_4$ case. Let $(a_{ij})_{ij}$,  $(b_{ij})_{ij}$, $(c_{ij})_{ij}$ be the matrices corresponding to $J|_{V_{[\l_2-\l_1]}}$, $J|_{V_{[\l_3-\l_1]}}$, $J|_{V_{[\l_4-\l_1]}}$, respectively, in a Weyl basis of $\mn^-$.

By imposing $N_J(X_\gamma,X_\delta)=0$ for $\gamma\in [\l_3-\l_1]$ and $\delta \in [\l_4-\l_1]$ we obtain that 
the matrix of $J|_{V_{[\l_4-\l_1]}}$ in the Weyl basis is
$$\left(\begin{matrix}
-b_{44} & -b_{34} & b_{24} & b_{14}\\
-b_{43} & -b_{33} & b_{23} & b_{13}\\
 b_{42} & b_{32} &-b_{22} &-b_{12}\\
 b_{41} & b_{31} &-b_{21} &-b_{11}
\end{matrix}\right).$$
With this, $N_J(X_{\l_2-\l_1},X_{-\l_4-\l_1})=0$, $N_J(X_{-\l_4-\l_3},X_{-\l_3-\l_1})=0$  and $N_J(X_{\l_4-\l_3},X_{-\l_3-\l_1})=0$ imply $b_{12}b_{32}=0$, $b_{12}b_{42}= 0$ and $b_{32}b_{42}= 0$. But we know that $a_{12}^2+a_{32}^2+a_{42}^2\neq 0$ since $X_{-\l_2-\l_1}$ is not an eigenvector. So we conclude that only one of $b_{12},b_{32},b_{42}$ is not zero. In each of the three cases we obtain $a_{12}=a_{32}=a_{42}=0$ if $N_J$ vanishes, which cannot happen since $X_{-\l_2-\l_1}$ is not an eigenvector of $J$.

\item Case $D_l$, $l\geq 5$. The $M$-equivalence classes are:
\begin{equation*}
\{\pm\lambda_j-\lambda_i\}, \quad 1\leq i<j\leq l.
\end{equation*}
For $1\leq i<j\leq l$, we have $\dim V_{[\l_{j}-\l_i]}=2$; let $X_{ij}$ be a generator of $\mgg_{\l_i-\l_j}$ and let $Y_{ij}$ be a generator of $\mgg_{\l_i+\l_j}$. Thus $V_{[\l_j-\l_i]}$ is spanned by $\{X_{ij},Y_{ij}\}$ and $J$ in this basis has a matrix of the form
\begin{equation*}
\left(\begin{matrix}
a_{ij}&\frac{-(1+a_{ij}^2)}{c_{ij}}\\
c_{ij}&-a_{ij}
\end{matrix}\right),\mbox{ where } c_{ij}\neq 0.
\end{equation*}

Conditions $N_J(X_{13},X_{23})=0$  and  $N_J(X_{12},X_{23})=0$ imply
\begin{equation}\label{eq.igualcm}
\frac{m_{\l_1-\l_2,\l_2+\l_3}}{m_{\l_1-\l_2,\l_2-\l_3}}=\frac{c_{13}}{c_{23}}=-\frac{m_{\l_1-\l_3,\l_2+\l_3}}{m_{\l_1+\l_3,\l_2-\l_3}}.
\end{equation}
Now using Jacobi identity, we have 
\begin{equation*}
\begin{array}{lll}
0 & = & 
[Y_{23},[X_{12},X_{23}]]-[[Y_{23},X_{12}],X_{23}]-[X_{12},[Y_{23},X_{23}]]
\\ 
& = & m_{\lambda_1-\lambda_2,\lambda_2-\lambda_3}[Y_{23},X_{13}]+m_{%
\lambda_2 +\lambda_3,\lambda_1-\lambda_2}[X_{23},Y_{13}] \\ 
& = & \left(
m_{\lambda_1-\lambda_2,\lambda_2-\lambda_3}m_{\lambda_2+\lambda_3,\lambda_1-%
\lambda_3}+m_{\lambda_2+\lambda_3,\lambda_1-\lambda_2}m_{\lambda_2-%
\lambda_3,\lambda_1+\lambda_3}\right)Y_{12}.%
\end{array}%
\end{equation*}
Thus 
\begin{eqnarray*}
m_{\lambda_1-\lambda_2,\lambda_2-\lambda_3}m_{\lambda_2+\lambda_3,\lambda_1-%
\lambda_3}&=&-m_{\lambda_2+\lambda_3,\lambda_1-\lambda_2}m_{\lambda_2-%
\lambda_3,\lambda_1+\lambda_3}\\
&=&-m_{\lambda_1-\lambda_2,\lambda_2+\lambda_3}m_{\lambda_1+\lambda_3,%
\lambda_2-\lambda_3},
\end{eqnarray*} and therefore
\begin{equation}
 \frac{m_{\l_1-\l_2,\l_2+\l_3}}{m_{\l_1-\l_2,\l_2-\l_3}}=\frac{m_{\l_1-\l_3,\l_2+\l_3}}{m_{\l_1+\l_3,\l_2-\l_3}}.
\end{equation}
This equation clearly contradicts \eqref{eq.igualcm} and hence $J$ is not integrable.

\item Case $G_2$. The $M$-equivalence classes are
\begin{equation*}
\{-\l_1, -2\l_2-\l_1\},\ \{-
\l_2-\l_1,-3\l_2-\l_1\},\ \{ -\l_2,-3\l_2-2\l_1\}.
\end{equation*}

For $(i,j)\in\{(1,0),(0,1),(1,1)\}$, $\dim V_{[-i\l_1-j\l_2]}=2$.
In  a Weyl basis of $\mn^-$ we have that the matrix of $J|_{V_{[-i\l_1-j\l_2]}}$ has the form
\begin{equation*}
\left(\begin{matrix}
a_{ij}&\frac{-(1+a_{ij}^2)}{c_{ij}}\\
c_{ij}&-a_{ij}
\end{matrix}\right),\mbox{ where } c_{ij}\neq 0.
\end{equation*}
Denote $m=m_{-(\l_1+\l_2),-\l_2}$ then
\begin{eqnarray*}
N_J(X_{-\l_1-\l_2},X_{-\l_2}) & = &m(a_{11}a_{01}-1)X_{-\l_1-2\l_2}-m(a_{11}+a_{01})JX_{-\l_1-2\l_2}\\
&=&m\left((a_{11}a_{01}-1)+a_{10}(a_{11}+a_{01})\right)X_{-\l_1-2\l_2}\\
&&\qquad+m(a_{11}+a_{01})\frac{1+a_{10}^2}{c_{10}}X_{-\l_1}.
\end{eqnarray*}
Thus
$$N_J(X_{-\l_1-\l_2},X_{-\l_2})=0 \;\Leftrightarrow\;
a_{01}=-a_{11}\,\mbox{ and } \, a_{11}a_{01}=1,$$   and $J$ is not integrable.
\end{itemize}
\end{proof}

\section{$K$-Invariant complex structures on intermediate flags}

In this section we study existence of invariant almost complex structures on intermediate flags $\F_\Theta$, and their integrability. We obtain the classification of the flags admitting $K$-invariant complex structures, only some of type $C_l$ do, and also we describe the complex structures explicitly.

Proposition \ref{corminvar} states that if $\F_\Theta=K/K_\Theta$ with $\Theta\neq \emptyset$ admits a $K$-invariant almost complex structure, then $\F_\Theta$ is one of the following:
\begin{itemize}
\item of type $B_3$ and $\Theta=\{\lambda_1-\lambda_2, \lambda_2-\lambda_3\}$;
\item of type $C_l$ with $l=4$ and $\Theta=\{\lambda_1-\lambda_2,\lambda_3-\l_4\}$ or $\Theta=\{\lambda_3-\lambda_4,2\lambda_4\}$; or $l\neq 4$ and $\Theta=\{\lambda_{d}-\lambda_{d+1},\ldots,\lambda_{l-1}-\lambda_l,2\lambda_l\}$ for $d>1$, $d$ odd.
\item of type $D_l$ with $l=4$ and $\Theta$ being one of: $\{ \lambda_{1}-\lambda_{2},\lambda_{3}-\lambda_{4}\}$, $\{ \lambda_{1}-\lambda_{2},\lambda_{3}+\lambda_{4}\}$, $\{ \lambda_{3}-\lambda_{4},\lambda_{3}+\lambda_{4}\}$, $\{\lambda_1-\lambda_{2}, \lambda_{2}-\lambda_{3},\lambda_{3}-\lambda_{4}\}$, $\{\lambda_1-\lambda_{2}, \lambda_{2}-\lambda_{3},\lambda_{3}+\lambda_{4}\}$, $\{\lambda_2-\lambda_{3}, \lambda_{3}-\lambda_{4},\lambda_{3}+\lambda_{4}\}$; or $l\geq 5$ and $\Theta=\{\lambda_d-\lambda_{d+1},\cdots, \lambda_{l-1}-\lambda_{l},\lambda_{l-1}+\lambda_{l}\}$ for $1<d\leq l-1$.
\end{itemize}

We analyse the cases $B$, $C$ and $D$ separately in the next subsections. We need to treat them separately since the isotropy representations differ significantly. Nevertheless the techniques applied follow the request of necessary conditions, we shall describe below.

Recall that $K$-invariant almost complex structures on $\F_\Theta$ are in one to one correspondence with $K_\Theta$-invariant maps $J:\mn_\Theta^-\lra \mn_\Theta^-$ such that $J^2=-1$. 

Assume $J:\mn_\Theta^-\lra \mn_\Theta^-$ is $K_\Theta$-invariant and $J^2=-1$. Then $J$ is necessarily $M$-invariant since $M\subset K_\Theta=M(K_\Theta)_0$, hence by Proposition \ref{propminvar} we have 
\begin{equation}
JV_{[\alpha]}=V_{[\alpha]}\mbox{ for each }\alpha\in \Pi^-\bs \lela\Theta\rira^-. \label{Valphainv}
\end{equation} 
In addition, $J$ is also $(K_\Theta)_0$ invariant and therefore \begin{eqnarray}
&\ad_X J=J\ad_X \mbox{ for all }X\in \mk_\Theta.&\label{kthetainv}
\end{eqnarray} 

Assume $\mn_\Theta^-=W_1\oplus \cdots\oplus W_s$ is a decomposition on  $K_\Theta$-invariant and irreducible subspaces. If the representation on $W_i$ is not equivalent to the representation on any other $W_j$, $j\neq i$ then $JW_i=W_i$ because of Lemma \ref{lem2}. Notice that if this is the case $W_i$ is even dimensional. To the contrary, if $JW_i=W_j$ for some $i\neq j$, then the $K_\Theta$ representation on these subspaces are equivalent, and $J$ gives such a equivalence.

To address the non-existence of almost complex structures, we prove that some of necessary conditions above cannot hold simultaneously. For the cases where an invariant almost complex structure does exists, we use these necessary conditions to build them explicitly. Notice that, for instance, if $J:\mn_\Theta^-\lra \mn_\Theta^-$ with $J^2=-1$ satisfying \eqref{Valphainv} and \eqref{kthetainv} is $K_\Theta$ invariant.

We remark that the conditions related to the $K_\Theta$ and $\mk_\Theta$ representation on $\mn_\Theta^-$ are dealt through a description of $\mgg$ as a matrix Lie algebra. Integrability of the almost complex structure is established by computing the Nijenhuis tensor, as in the maximal flag case.

\subsection{Flags of $B_3=\so(3,4)$.}

The set of simple roots is $\Sigma=\{\lambda_1-\lambda_2,\lambda_2-\lambda_3,\lambda_3\}$, and we take $\Theta=\{\lambda_1-\lambda_2, \lambda_2-\lambda_3\}$ obtaining $\langle \Theta \rangle=\pm \{\lambda_1-\lambda_2, \lambda_2-\lambda_3,
\lambda_1-\lambda_3\}$. Notice that the flag is a  six dimensional manifold. The $M$-equivalence classes outside of $\Theta$ are: $\{\lambda_1+\lambda_2,
\lambda_3\}$, $\{\lambda_1+\lambda_3, \lambda_2\}$ and $\{\lambda_2+\lambda_3, \lambda_1\}$. The compact subgroup $\left( K_{\Theta}\right)_0$ is isomorphic to $\mbox{SO}\mbox(3)$.

We consider the realization of $B_{3}=\mathfrak{so}(3,4)$ in real
matrices of the type 
\begin{equation*}
\left( 
\begin{array}{ccc}
0 & \beta & \gamma \\ 
-\gamma ^{T} & A & B \\ 
-\beta ^{T} & C & -A^{T}%
\end{array}%
\right) ,
\end{equation*}%
with $A,B,C$ are $3\times 3$ matrices, $\beta ,\gamma $ $1\times 3$ matrices  and 
$B+B^{T}=C+C^{T}=0$. Then, $\left( K_{\Theta }\right) _{0}$ (respectively $M$%
) is given by matrices of the form
\begin{equation*}
\left( 
\begin{array}{ccc}
1 & 0 & 0 \\ 
0 & g & 0 \\ 
0 & 0 & g%
\end{array}%
\right) ,
\end{equation*}%
with $g\in \mbox{SO}\mbox(3)$ (respectively $g$ diagonal with entries $\pm 1$ and an even amount of $-1$ entries). The root space corresponding to the short root $\lambda _{1}$ is given by
matrices where the components $A,B,C$ and $\beta $  vanish and $\gamma 
$ is a multiple of $e_{1}=(1,0,0)$. The same holds for the roots $\lambda _{2}$
and $\lambda _{3}$ with $e_{2}=(0,1,0)$ and $e_{3}=(0,0,1)$, respectively.
The root spaces corresponding to $\lambda _{i}+\lambda _{j}$ have $B$ as unique non-vanishing component and it has the following form, depending on the long root:
\begin{eqnarray*}
&\lambda _{1}+\lambda _{2}:B=\left( 
\begin{array}{ccc}
0 & -1 & 0 \\ 
1 & 0 & 0 \\ 
0 & 0 & 0%
\end{array}%
\right) \ \ \ \lambda _{1}+\lambda _{3}:B=\left( 
\begin{array}{ccc}
0 & 0 & -1 \\ 
0 & 0 & 0 \\ 
1 & 0 & 0%
\end{array}%
\right)& \\
& \lambda _{2}+\lambda _{3}:B=\left( 
\begin{array}{ccc}
0 & 0 & 0 \\ 
0 & 0 & -1 \\ 
0 & 1 & 0%
\end{array}%
\right) .&
\end{eqnarray*}%

The subspaces $V_{c}=\sum_{i}\mathfrak{g}_{\lambda _{i}}$ and $V_{l}=\sum_{i,j}%
\mathfrak{g}_{\lambda _{i}+\lambda _{j}}$ are both invariant subspaces under the adjoint
representation of $K_{\Theta }=M\cdot\mbox{SO}\mbox(3)$. The representation of
the $\mathrm{SO}\left( 3\right) $ on $V_{c}$ is isomorphic to canonical
representation on $\mathbb{R}^{3}$, while the representation on $V_{l}$ is
the adjoint representation. These two representations of $\mbox{SO}\mbox(3)$
are isomorphic. In fact, an isomorphism is constructed via the identification of $%
\mathbb{R}^{3}$ with the imaginary quaternions $\mathbb{H}$: if $p,q\in 
\mathbb{H}$ then $\ad(q)p=[q,p]\in \mathrm{Im}\ \mathbb{H}$ and $\ad(q)\in \mathfrak{so}(3)$ that commutes with the representations of the $\mbox{SO}\mbox(3)$.
This isomorphism also commutes with the representations of $M$. Indeed, considering the basis $\{e_{1},e_{2},e_{3}\}=\{i,j,k\}\in \mathbb{R}^{3}=\mathrm{Im}\ \mathbb{H}$, we have
\begin{eqnarray*}
&\mathrm{ad}(i)=\left( 
\begin{array}{ccc}
0 & 0 & 0 \\ 
0 & 0 & -2 \\ 
0 & 2 & 0%
\end{array}%
\right), \; \mathrm{ad}(j)=\left( 
\begin{array}{ccc}
0 & 0 & 2 \\ 
0 & 0 & 0 \\ 
-2 & 0 & 0%
\end{array}%
\right) &\\
& \mbox{and}\;\;  \mathrm{ad}(k)=\left( 
\begin{array}{ccc}
0 & -2 & 0 \\ 
2 & 0 & 0 \\ 
0 & 0 & 0%
\end{array}%
\right) .&
\end{eqnarray*}%

The isomorphism $P: V_c\rightarrow V_l$ takes the root spaces $\mathfrak{g}_{\lambda
_{1}}$, $\mathfrak{g}_{\lambda _{2}}$ and $\mathfrak{g}_{\lambda _{3}}$ to
the root spaces $\mathfrak{g}_{\lambda _{2}+\lambda _{3}}$, $\mathfrak{g}%
_{\lambda _{1}+\lambda _{3}}$ and $\mathfrak{g}_{\lambda _{1}+\lambda _{2}}$%
, respectively. In addition, it commutes with
the representation of $(K_{\Theta})_0$ and with the representations of $M$.
Therefore, $P: V_c\rightarrow V_l$ commutes
with the representation of $K_{\Theta}$.

\begin{proposition} The flag manifold $\F_\Theta$ of $B_3$ with $\Theta=\{\l_1-\l_2,\l_2-\l_3\}$ admits $K$-invariant almost complex structures and each of them is given by $J_a$ for some $a \neq 0$ where $ J_a:\mn_\Theta^+\lra \mn_\Theta^+$ is defined by
$$ J_a(X)=aP(X) \mbox{ if }X \in V_c,\quad J_a(X)=-aP^{-1}(X) \mbox{ if }X\in V_l.$$
 These structures are not integrable.
\end{proposition}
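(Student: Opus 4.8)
The plan is to determine all $K_\Theta$–invariant complex structures on $\mathfrak n_\Theta^+=V_c\oplus V_l$ by Schur's Lemma, and then to compute the Nijenhuis tensor. Since $(K_\Theta)_0\cong\mathrm{SO}(3)$ acts on each of $V_c$ and $V_l$ by the standard representation on $\R^3$ — which is absolutely irreducible — and the fixed isomorphism $P\colon V_c\to V_l$ intertwines the actions of both $(K_\Theta)_0$ and $M$, Schur's Lemma gives $\End_{K_\Theta}(V_c)=\End_{K_\Theta}(V_l)=\R\,\mathrm{Id}$, $\Hom_{K_\Theta}(V_c,V_l)=\R\,P$ and $\Hom_{K_\Theta}(V_l,V_c)=\R\,P^{-1}$. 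Hence, writing an invariant $J$ in block form with respect to $\mathfrak n_\Theta^+=V_c\oplus V_l$, its four blocks are scalar multiples of $\mathrm{Id}$, $P$, $P^{-1}$ and $\mathrm{Id}$ respectively, so $J$ is encoded by four real numbers. Item \ref{Wirred} of Lemma \ref{lem1} rules out $JV_c=V_c$ (the three–dimensional $V_c$ admits no complex structure), so the two off–diagonal blocks are not both zero; imposing $J^2=-\mathrm{Id}$ on these four scalars then forces $J$ to be one of the maps $J_a$, $a\neq0$, of the statement. Conversely each $J_a$ commutes with the $K_\Theta$–action because $P$ does, and satisfies $J_a^2=-\mathrm{Id}$, so it is a $K_\Theta$–invariant complex structure and thus defines a $K$–invariant almost complex structure on $\F_\Theta$; this proves existence.

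For non–integrability I would exploit the bracket structure of $\mathfrak n_\Theta^+$. None of $\lambda_i+\lambda_j+\lambda_k$, $2\lambda_i+\lambda_j$, $\lambda_i+\lambda_j+\lambda_k+\lambda_l$ is a root of $B_3$, so $[V_c,V_l]=0$ and $[V_l,V_l]=0$: the algebra $\mathfrak n_\Theta^+$ is two–step nilpotent with centre $V_l$. On the other hand, for $i\neq j$ the root $\lambda_i+\lambda_j$ is a (long) root and, since $\mathfrak g$ is a split real form, $[\mathfrak g_{\lambda_i},\mathfrak g_{\lambda_j}]=\mathfrak g_{\lambda_i+\lambda_j}\subset V_l$ is nonzero. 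Because $J_a$ interchanges $V_c$ and $V_l$, for $X,Y\in V_c$ each of $[J_aX,J_aY]$, $[J_aX,Y]$, $[X,J_aY]$ is a bracket with an element of $V_l$ and hence vanishes, which leaves
\[
N_{J_a}(X,Y)=-[X,Y].
\]
Choosing $X\in\mathfrak g_{\lambda_1}$ and $Y\in\mathfrak g_{\lambda_2}$ gives $N_{J_a}(X,Y)=-[X,Y]\in\mathfrak g_{\lambda_1+\lambda_2}\setminus\{0\}$, so the Nijenhuis tensor is not identically zero and $J_a$ is not integrable. This holds for every $a\neq0$; in fact one checks in the same way that no $K_\Theta$–invariant almost complex structure on this flag is integrable.

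I do not expect a genuine obstacle here. The substantive part is the Schur reduction together with the little scalar computation forced by $J^2=-\mathrm{Id}$, which must be organized carefully so as not to lose the parameter $a$; after that, non–integrability drops out of a single bracket identity. The one point that requires attention is the verification that the relevant sums of two, three or four of the $\lambda_i$'s are not roots of $B_3$ — this is exactly what makes $V_l$ the abelian centre of $\mathfrak n_\Theta^+$, and thereby reduces $N_{J_a}$ on $V_c\times V_c$ to $-[\,\cdot\,,\,\cdot\,]$ — and this is immediate once the root system of $B_3$ is written down.
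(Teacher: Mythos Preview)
Your approach matches the paper's: Schur on $V_c\oplus V_l$, then Nijenhuis via the two-step structure of $\mathfrak n_\Theta^+$. But the claim that ``imposing $J^2=-\mathrm{Id}$ on these four scalars then forces $J$ to be one of the maps $J_a$'' does not hold, and this is a genuine gap. Writing $J=\begin{pmatrix}\alpha\,\mathrm{Id}&\beta P^{-1}\\ \gamma P&\delta\,\mathrm{Id}\end{pmatrix}$, the condition $J^2=-\mathrm{Id}$ yields $\beta,\gamma\neq 0$, $\delta=-\alpha$ and $\beta\gamma=-(1+\alpha^2)$: a \emph{two}-parameter family. Nothing forces the diagonal blocks to vanish; Lemma~\ref{lem1}(\ref{Wirred}) only gives $V_c\cap JV_c=\{0\}$, and because $V_c$ and $V_l$ are \emph{equivalent} $K_\Theta$-modules there are infinitely many invariant $3$-planes, so $JV_c$ need not equal $V_l$. (The paper's own proof makes the same leap when it asserts that $J$ ``interchanges $V_c$ with $V_l$''. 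Note also that $J_a$ as literally defined in the statement satisfies $J_a^2=-a^2\,\mathrm{Id}$, so only $a=\pm1$ are complex structures; presumably $-a^{-1}P^{-1}$ on $V_l$ was intended.)

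Your non-integrability argument relies on $JX,JY\in V_l$ for $X,Y\in V_c$, hence only covers the subfamily $\alpha=0$. For general $\alpha$ one computes instead
\[
N_J(X,Y)=(3\alpha^2-1)\,[X,Y]\;-\;2\alpha\beta\,P^{-1}[X,Y],
\]
and since the two summands lie in $V_l$ and $V_c$ respectively (and $\beta\neq0$), this is still nonzero whenever $[X,Y]\neq0$. So the conclusion survives for the full family, but the computation is not ``the same way'' as you claim.
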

\begin{proof}
We have $\mathfrak{n}_{\Theta}^+=V_c\oplus V_l$ as $K_\Theta$-invariant irreducible subspaces and because of the reasoning above, $J_a$  is indeed invariant by $K_{\Theta}$. Thus, there is a one-parameter family of invariant almost complex structures on $\mathbb{F}_{\Theta}$.

Furthermore, a $K_\Theta$-invariant complex structure $J$ on $\mn_\Theta^+$ is of this form. 
In fact, any $K_\Theta$-invariant complex structure $J:\mn_\Theta^+\lra \mn_\Theta^+$ interchanges $V_c$ with $V_l$ by \ref{Wirred}. in Lemma \ref{lem1}, since these are irreducible  odd dimensional subspaces. Moreover the subspaces $\mgg_{\l_1+\l_2}\oplus \mgg_{\l_3}$, $\mgg_{\l_1+\l_3}\oplus \mgg_{\l_2}$, $\mgg_{\l_2+\l_3}\oplus \mgg_{\l_1}$ are $J$-invariant because of \eqref{Valphainv}. 
The fact that $\ad_XJ=J\ad_X$ for all $X\in \mk_\Theta$ implies that $J$ is actually a multiple of $P$.

These structures are never integrable. In fact, $[V_c,V_c]= V_l$ and $[V_l,\mn_\Theta^+]=0$. Thus, for $X,Y\in V_c$ we have $J_aX, J_aY \in V_l$ and therefore $N_{J_a}(X,Y)=-[X,Y]$. Hence $N_J$ never vanishes.
\end{proof}

\begin{observation}
This flag $\F_\Theta$ of type $B_3$ and $\Theta=\{\lambda_1-\lambda_2, \lambda_2-\lambda_3\}$ is the Grassmannian of three dimensional isotropic
subspaces of $\mathbb{R}^{7}$, that is, three dimension subspaces in
which the quadratic form matrix%
\begin{equation*}
\left( 
\begin{array}{ccc}
1 & 0 & 0 \\ 
0 & 0 & 1_{3\times 3} \\ 
0 & 1_{3\times 3} & 0%
\end{array}%
\right)
\end{equation*}%
vanishes. The proposition above gives a family of $K$-invariant almost complex structures on this flag which is
parametrized by $\mathbb{R}\backslash \{0\}$.
\end{observation}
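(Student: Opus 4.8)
The plan is to establish three things: (i) each $J_a$ with $a\neq0$ is a genuine $K_\Theta$-invariant complex structure on $\mn_\Theta^+$; (ii) every $K_\Theta$-invariant complex structure arises this way; and (iii) none of them is integrable. For (i), I would first recall from the preceding discussion that $\mn_\Theta^+=V_c\oplus V_l$ is a decomposition into $K_\Theta$-invariant irreducible subspaces, and that the map $P:V_c\to V_l$ constructed via the quaternion identification commutes with the representation of $K_\Theta=M\cdot\mathrm{SO}(3)$. Then $J_a$ is manifestly $K_\Theta$-equivariant because it is built out of $P$ and $P^{-1}$, which are both equivariant; and a direct check gives $J_a^2(X)=aP(-aP^{-1}(X))=-X$ on $V_l$ and symmetrically on $V_c$, so $J_a^2=-1$. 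Hence each $J_a$ corresponds to a $K$-invariant almost complex structure on $\F_\Theta$, giving the claimed one-parameter family.

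For (ii), let $J:\mn_\Theta^+\to\mn_\Theta^+$ be an arbitrary $K_\Theta$-invariant complex structure. Since $V_c$ and $V_l$ are irreducible of odd dimension $3$, item \ref{Wirred} of Lemma \ref{lem1} forbids $J V_c=V_c$ (a $3$-dimensional space cannot be $J$-invariant, as the eigenvalues of $J$ are $\pm i$), so $JV_c\cap V_c=\{0\}$; since $JV_c$ is again a $3$-dimensional $K_\Theta$-invariant subspace of the $6$-dimensional space $V_c\oplus V_l$, it must meet $V_l$, and irreducibility of $V_l$ forces $JV_c=V_l$, hence also $JV_l=V_c$. Thus $J|_{V_c}:V_c\to V_l$ is a $K_\Theta$-equivariant isomorphism. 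Now $P^{-1}\circ J|_{V_c}:V_c\to V_c$ is $K_\Theta$-equivariant; because $V_c$ is $\mathrm{SO}(3)$-irreducible over $\R$ and the canonical representation is of real type, its commutant is the scalars $\R$ (one can also see this concretely: the condition \eqref{Valphainv} that $J$ preserve each $V_{[\alpha]}$ means $J$ sends $\mg_{\lambda_1}$ into $\mg_{\lambda_2+\lambda_3}$ etc., which already pins down $J|_{V_c}$ up to a diagonal, and then $\ad_X J=J\ad_X$ for $X\in\mk_\Theta$ forces that diagonal to be scalar). So $J|_{V_c}=aP$ for some $a\in\R$, and $J^2=-1$ together with $J|_{V_l}=a' P^{-1}$ (same argument on the other side) forces $a'=-a$ and $a\neq0$; that is, $J=J_a$.

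For (iii) I would compute the Nijenhuis tensor on vectors of $V_c$. The key structural fact is $[V_c,V_c]\subset V_l$ and $[V_l,\mn_\Theta^+]=0$, which one reads off from the matrix realization (the long root spaces have only a skew $B$-block, and two such brackets land in the $A$-part, i.e. back among the long-root directions, while brackets involving $V_l$ vanish by nilpotency and degree). Then for $X,Y\in V_c$ we have $J_aX,J_aY\in V_l$, so $[J_aX,J_aY]=0$, and $[X,J_aY]\in[V_c,V_l]=0$, $[J_aX,Y]=0$, leaving $N_{J_a}(X,Y)=-[X,Y]$; since the canonical representation on $V_c$ is nontrivial, $[V_c,V_c]=V_l\neq0$, so $N_{J_a}$ does not vanish identically. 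I expect the main obstacle to be step (ii): one must carefully justify that the commutant of the $K_\Theta$-representation on $V_c$ (equivalently on $V_l$) is exactly $\R$, ruling out a larger family — this is where the oddness of the dimension and the real type of the $\mathrm{SO}(3)$-representation, combined with the $M$-equivariance encoded in \eqref{Valphainv}, all have to be used together; the verification of $J_a^2=-1$ and the Nijenhuis computation are routine by comparison.
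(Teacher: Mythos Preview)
Your argument follows essentially the same route as the paper's proof of the proposition preceding this remark: use Lemma~\ref{lem1} to force $JV_c=V_l$ (odd dimension), then Schur-type reasoning (equivalently the $M$-invariance \eqref{Valphainv} together with $\ad_X J=J\ad_X$) to reduce $J|_{V_c}$ to a scalar multiple of $P$, and finally the bracket relations $[V_c,V_c]=V_l$, $[V_l,\mn_\Theta^+]=0$ to compute $N_{J_a}(X,Y)=-[X,Y]$ on $V_c$.

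One arithmetic slip to fix: with $J_a|_{V_c}=aP$ and $J_a|_{V_l}=-aP^{-1}$ you get $J_a^2=-a^2\,\mathrm{id}$, not $-\mathrm{id}$; the correct companion is $J_a|_{V_l}=-a^{-1}P^{-1}$, and accordingly in your step~(ii) the relation coming from $J^2=-1$ is $a'=-1/a$, not $a'=-a$. (This is in fact a typo in the paper's displayed formula for $J_a$; your Schur argument actually shows the family is genuinely parametrized by $a\in\mathbb{R}\setminus\{0\}$ once this is corrected.) Also, a small wording issue: the long-root spaces sit in the $B$-block, not the $A$-block, so $[V_c,V_c]$ lands in the $B$-part --- but your conclusions $[V_c,V_c]\subset V_l$ and $[V_l,\mn_\Theta^+]=0$ are correct.
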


\subsection{Flags of $C_l=\mathfrak{sp}(l,\mathbb{R})$}

The set of simple roots
is $\Sigma=\{\lambda_1-\lambda_2,\ldots,\lambda_{l-1}-\lambda_l,2\lambda_l\}$. For the analysis of these flags, we separate the case $l=4$ where the $M$-equivalence classes are different from the general case.

\subsubsection{Case $C_l$, $l\neq 4$}
Assume $l\neq 4$ and let $\Theta=\{\lambda_{d+1}-\lambda_{d+2},\ldots,\lambda_{l-1}-\lambda_{l},2\lambda_l\}$ with $d\in\{0,\cdots,l\}$ and $d$ even. Notice that $\Theta$ gives a Dynkin sub diagram $C_p$ of $C_l$ with $p=l-d$, thus $\mathfrak{k}_{\Theta}$ is the maximal compact subalgebra of $\mathfrak{sp}(p,\mathbb{R})$, that is, $\mathfrak{k}
_{\Theta}\simeq \mathfrak{u}(p)$.

The $M$-equivalence classes in $\Pi^+\bs\lela\Theta\rira^+$ are $$\{\lambda_i-\lambda_{j},\lambda_i+\lambda_{j}\},\, 1\leq i\leq d,\ i<j\leq l,\; \mbox{ and }\;\{2\lambda_{1}, \ldots, 2\lambda_{d}\}.$$ For each positive root $\alpha$ denote $\mt_\alpha=(\mathfrak{g}_{\alpha}\oplus \mathfrak{g}_{-\alpha})\cap \mathfrak{k}$. Then  $\mk=\mk_\Theta\oplus \mm_\Theta$ where
$\mk_\Theta$ is the vector space sum of $\mt_\alpha$ where $\alpha$ runs in $ \lela\Theta\rira^+$ and $$\mm_\Theta=\sum_{1\leq i\leq d, i<j\leq l}\mt_{\lambda_i-\lambda_j} \oplus \mt_{2\lambda_1}\oplus\cdots\oplus \mt_{2\lambda_d}  $$
is a reductive complement of $\mk_\Theta$.

The invariant and
irreducible subspaces of $\mm_\Theta$ by the $K_{\Theta}$ action were described in \cite[Section 5.3]{patrao2015isotropy} and we present them below. Define
\begin{eqnarray*}
R&=&\{\lambda_i\pm\lambda_j: 1\leq i<j\leq d\}\cup \{2\lambda_i: 1\leq i\leq d \}.\\
\Pi_i&=&\{\lambda_i\pm\lambda_j: d+1\leq j\leq l\},\quad i=1\ldots,d, 
\end{eqnarray*}
and let $W_R=\sum_{\alpha \in R}\mathfrak{k}_{\alpha}$ and $W_i=\sum_{\alpha \in \Pi_i}\mathfrak{k}_{\alpha}$, $i=1,\ldots,d$. We have 
\begin{equation}\label{mthetadescminv}
\mathfrak{m}_{\Theta}=W_R\oplus \sum_{i=1}^{d}W_i
\end{equation}
and  the subspaces above are $M$-invariant. 

If $\alpha \in R$ and $\beta \in \Theta$, then $\pm\alpha \pm \beta$ is never a root so $[Y,X]=0$ for any $Y\in \mathfrak{k}%
_{\Theta}$ and $X\in W_R$. Thus $\mbox{Ad}\mbox(g)X=X$ for any $g \in (K_{\Theta})_0$, since $(K_{\Theta})_0$ is connected, and therefore $W_R$ is invariant by $\mbox{Ad}\mbox(%
K_{\Theta})$.

Each subspace $W_i$ is $K_\Theta$ invariant and irreducible subspace and the respective representations not equivalent if $i\neq j$ (see \cite[Lemma 5.11]{patrao2015isotropy}).  
We make use of the following isomorphism between the compact algebra $\mathfrak{k}$ and $\mmu(l)$ given by
\begin{equation*}
\left( 
\begin{array}{cc}
A & -B \\ 
B & A%
\end{array}
\right)\longmapsto A+iB, \quad A+A^T=B-B^T=0.\label{isomk}
\end{equation*} The isomorphism takes $\mathfrak{k}_{\Theta}$ in the algebra of anti-hermitian matrices of the form 
\begin{equation}
\mathfrak{k}_{\Theta}: \left( 
\begin{array}{cc}
0 & 0 \\ 
0 & X%
\end{array}
\right),\label{kthetaform}
\end{equation}
being $X$ a $p \times p$ matrix. Moreover $W_R$ corresponds to the matrices of the form 
\begin{equation*}
W_R: \left( 
\begin{array}{cc}
* & 0 \\ 
0 & 0%
\end{array}
\right),
\end{equation*}
whith $d \times d$ upper left block, while the subspace $%
W=\sum_{i=1}^{d}W_i$ corresponds to 
\begin{equation}\label{Wform}
W: \left( 
\begin{array}{cc}
0 & -\overline{C}^T \\ 
C & 0%
\end{array}
\right),
\end{equation}
where $C$ is $d\times p$. A subspace $W_j$ is
given by those matrices $C$ having non vanishing entries in column $j$. The representation of $\mathfrak{k}_{\Theta}$ in $W$ is
given by the adjoint action:
\begin{equation*}
\left[\left( 
\begin{array}{cc}
0 & 0 \\ 
0 & X%
\end{array}
\right), \left( 
\begin{array}{cc}
0 & -\overline{C}^T \\ 
C & 0%
\end{array}
\right)\right]= \left( 
\begin{array}{cc}
0 & \overline{C}^TX \\ 
XC & 0%
\end{array}
\right).
\end{equation*}
Thus $C$ having non-vanishing entries on column $j$ implies the same occurs or $XC$. So the subspaces $W_j$ are, in fact, invariant.

The image of $\mathfrak{k}_{\lambda_j-\lambda_k}$ in $\mmu(l)$ through the isomorphism is generated by the real anti-symmetric matrix $A_{jk}=E_{jk}-E_{kj}$,
while the image of  $\mathfrak{k}_{\lambda_j+\lambda_k}$ is generated by the
imaginary symmetric matrix $S_{jk}=i(E_{jk}+E_{kj})$.

\begin{lemma}\label{aacc} \label{aacd} 
\begin{enumerate}
\item An almost complex structure $J: \mm_\Theta\lra \mm_\Theta$  is $M$-invariant if and only if $J$ leaves invariant each subspace $\mathfrak{k}_{\lambda_i-\lambda_j}\oplus \mathfrak{k}_{\lambda_i+\lambda_j}$ and \linebreak $\mathfrak{k}_{2\lambda_1}\oplus \cdots\oplus 
\mathfrak{k}_{2\lambda_{d}}$.

\item An $M$-invariant almost complex structure $J$ is $K_{\Theta}$-invariant if and only if for each $j=1,\ldots,d$ there is some  $\varepsilon_{j}=\pm 1$ such that $JA_{kj}=\varepsilon_{j}S_{kj}$
and $JS_{kj}=-\varepsilon_{j}A_{kj}$ for all $d<k\leq l$.\end{enumerate}
\end{lemma}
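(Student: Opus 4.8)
The plan is to prove the two assertions separately: the first is a direct translation of Proposition~\ref{propminvar} to the present situation, and the second combines it with the irreducibility of the $W_i$ and with a central element of $\mk_\Theta$.

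\emph{Part (1).} I would simply transport Proposition~\ref{propminvar} to $\mm_\Theta$ via the $K_\Theta$-equivariant isomorphism $X_\alpha\mapsto X_\alpha-X_{-\alpha}$, which sends $\mg_\alpha$ onto $\mk_\alpha=(\mg_\alpha\oplus\mg_{-\alpha})\cap\mk$. The $M$-equivalence classes in $\Pi^+\bs\langle\Theta\rangle^+$ are $\{\l_i-\l_j,\l_i+\l_j\}$ for $1\le i\le d$, $i<j\le l$, together with $\{2\l_1,\dots,2\l_d\}$; hence the subspace $V_{[\alpha]}$ of Proposition~\ref{propminvar} attached to a class of the first kind becomes $\mk_{\l_i-\l_j}\oplus\mk_{\l_i+\l_j}$ and the one attached to the second becomes $\mk_{2\l_1}\oplus\dots\oplus\mk_{2\l_d}$. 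Proposition~\ref{propminvar} then states precisely that an almost complex structure $J$ on $\mm_\Theta$ is $M$-invariant if and only if it leaves each of these subspaces invariant, which is exactly (1).

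\emph{Part (2), reduction to the $W_j$.} I would first observe that any $M$-invariant $J$ already respects the decomposition \eqref{mthetadescminv}: by (1) it preserves each two-dimensional block $\mk_{\l_i-\l_j}\oplus\mk_{\l_i+\l_j}$ and the block $\mk_{2\l_1}\oplus\dots\oplus\mk_{2\l_d}$, and since $W_R$ and each $W_j=\bigoplus_{d<k\le l}(\mk_{\l_j-\l_k}\oplus\mk_{\l_j+\l_k})$ are direct sums of such blocks, it follows that $JW_R=W_R$ and $JW_j=W_j$. On the other hand $\ad_X$ also preserves \eqref{mthetadescminv} for every $X\in\mk_\Theta$, because each $W_j$ is a $\mk_\Theta$-submodule and $W_R$ is centralized by $\mk_\Theta$. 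Since both $J$ and $\ad_X$ are block-diagonal for this decomposition, condition \eqref{kthetainv} characterizing $K_\Theta$-invariance among $M$-invariant structures is equivalent to $\ad_XJ=J\ad_X$ on each $W_j$ separately, the identity on $W_R$ being automatic because $\ad_X$ vanishes there.

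\emph{Part (2), the blocks $W_j$.} Here the key device is the element $S=\diag(0,\dots,0,iI_p)\in\mk_\Theta$ in the model \eqref{kthetaform}, which is central in $\mk_\Theta\cong\mmu(p)$. A short computation with \eqref{Wform} shows that $T_j\ce\ad_S|_{W_j}$ is a complex structure on $W_j$ (it is multiplication by $i$ on the column of $C$ that parametrizes $W_j$), and that on the basis $\{A_{kj},S_{kj}\}_{d<k\le l}$ of $W_j$ it acts by $T_jA_{kj}=S_{kj}$ and $T_jS_{kj}=-A_{kj}$. If $J$ is $K_\Theta$-invariant it commutes with $T_j=\ad_S|_{W_j}$, so $JT_j$ is an involution of $W_j$ (a product of two commuting complex structures); since $S$ is central, $JT_j$ also commutes with the $\mk_\Theta$-action on $W_j$, hence its $\pm1$-eigenspaces are $\mk_\Theta$-submodules, and irreducibility of $W_j$ forces $JT_j=\pm\mathrm{Id}$, i.e.\ $J|_{W_j}=\varepsilon_jT_j$ for a sign $\varepsilon_j=\pm1$; in terms of the basis this is exactly $JA_{kj}=\varepsilon_jS_{kj}$, $JS_{kj}=-\varepsilon_jA_{kj}$. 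Conversely, if $J$ is $M$-invariant and satisfies these relations then $J|_{W_j}=\varepsilon_j\ad_S|_{W_j}$ commutes with every $\ad_X|_{W_j}$, because $\varepsilon_j$ is a scalar and $[\ad_X,\ad_S]=\ad_{[X,S]}=0$; together with the automatic commutation on $W_R$ this yields \eqref{kthetainv}, hence $K_\Theta$-invariance by connectedness of $(K_\Theta)_0$. The only genuinely computational point — more a bookkeeping nuisance than a real obstacle — is checking the action of $\ad_S$ on the generators $A_{kj}$, $S_{kj}$, i.e.\ matching the Weyl-basis root vectors with their images in $\mmu(l)$ under \eqref{Wform}.
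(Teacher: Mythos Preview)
Your argument is correct, and Part~(2) follows a cleaner route than the paper's. The paper proves Part~(2) by computing directly with a generating set of $\mk_\Theta\cong\mmu(p)$: first the imaginary diagonal matrices force each $2\times2$ block of $J|_{W_j}$ to be a rotation $\varepsilon_{kj}\begin{psmallmatrix}0&-1\\1&0\end{psmallmatrix}$, then the real off-diagonal matrices $E_{ts}-E_{st}$ force $\varepsilon_{kj}$ to be independent of $k$, and finally the imaginary off-diagonal matrices $i(E_{ts}+E_{st})$ are checked for sufficiency. Your approach replaces this three-step computation by a single conceptual observation: the central element $S=\mathrm{diag}(0,\dots,0,iI_p)$ already supplies a $\mk_\Theta$-equivariant complex structure $T_j$ on $W_j$, and any other $\mk_\Theta$-equivariant complex structure on an irreducible module must be $\pm T_j$ by the Schur-type argument with the involution $JT_j$. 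This buys you the converse for free via centrality of $S$, whereas the paper has to verify it generator by generator.

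One small point to tighten: the irreducibility you invoke is $\mk_\Theta$-irreducibility of $W_j$, while what the paper quotes from \cite{patrao2015isotropy} is $K_\Theta$-irreducibility, which is a priori weaker. You can close this either by noting that $W_j$ is the standard representation of $\mmu(p)$ on $\mathbb{C}^p$ viewed over $\mathbb{R}$---irreducible because the center forces any real invariant subspace to be complex---or, more in the spirit of the paper, by observing that $T_j$ is also $M$-equivariant (it sends $A_{kj}$ to $S_{kj}$, and $M$ acts with the same sign on both since $\lambda_j-\lambda_k\sim_M\lambda_j+\lambda_k$), so the eigenspaces of $JT_j$ are in fact $K_\Theta$-submodules and the cited irreducibility applies directly.
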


\begin{proof} Let  $J: \mm_\Theta\lra \mm_\Theta$  be an isomorphism such that $J^2=-1$.  From Proposition \ref{propminvar} and taking into account the $M$-equivalence classes given above we have that $J$ is $M$-invariant if and only if it preserves each $\mathfrak{k}_{\lambda_i-\lambda_j}\oplus \mathfrak{k}_{\lambda_i+\lambda_j}$ and $\mathfrak{k}_{2\lambda_1}\oplus \cdots\oplus 
\mathfrak{k}_{2\lambda_{d}}$.

Now assume $J$ is $M$-invariant, then $J$ is  $K_\Theta$-invariant if and only if $\ad_YJ=J\ad_Y$ for all $X\in \mk_\Theta$.

Notice that $J$ preserves each $W_i$ and $W_R$ in \eqref{mthetadescminv}. Since $[X,Y]=0$ for all  $Y\in \mk_\Theta$, $X\in W_R$ we see that $J|_{W_R}$ is $K_\Theta$-invariant. Recall that $W_i$ is spanned by $A_{ji}$, $S_{ji}$ with $d+1\leq j\leq l$.

 Let $Y\in \mk_\Theta$ be as in \eqref{kthetaform} with $X$ imaginary diagonal matrix, i.e., $X=\mathrm{diag}(ia_1,\ldots,ia_m)$. We have $\mbox{ad}\mbox(Y)A_{kj}=a_jS_{kj}$ and $\mbox{ad}\mbox(%
Y)S_{kj}=-a_jA_{kj}$. That is, $\mathfrak{k}_{\lambda_j-\lambda_k}\oplus 
\mathfrak{k}_{\lambda_j+\lambda_k}$ is invariant by $\mbox{ad}\mbox(Y)$ and
the matrix of $\mbox{ad}\mbox(Y)$ in the basis $\{A_{kj},S_{kj}\}$ is 
\begin{equation}\label{adYkj}
\left( 
\begin{array}{cc}
0 & -a_j \\ 
a_j & 0%
\end{array}
\right).
\end{equation}

If we denote $J_{kj}$ the restriction of $J$ to $\mathfrak{k}_{\lambda_j-\lambda_k}\oplus \mathfrak{k}_{\lambda_j+\lambda_k}$, for $k>j$ we see that $J_{kj}$ commutes with $\ad(Y)$ only when its matrix in the basis $\{A_{kj},S_{kj}\}$ is
\begin{equation}
J_{kj}=\varepsilon_{kj}\left( 
\begin{array}{cc}
0 & -1 \\ 
1 & 0%
\end{array}
\right) \quad \mbox{ with }\varepsilon_{kj}=\pm1.
\label{Jkjform}
\end{equation}

Fix $j\in \{1,\ldots,d\}$ and let $l\geq s,t\geq d+1$, consider $Z$ be as in Eq. \eqref{kthetaform} with $X=E_{ts}-E_{st}$ and let $D$ be as in Eq. \eqref{Wform} with $C=E_{sj}$. Then
\begin{equation*}
\mbox{ad}\mbox(Z)D=\left( 
\begin{array}{cc}
0 & -\overline{XC}^T \\ 
XC & 0%
\end{array}
\right), \qquad \mbox{ with }XC=E_{tj}.
\end{equation*}
This implies that that $\mbox{ad}\mbox(Z)A_{sj}=A_{tj}$ and $\mbox{ad}\mbox(Z)S_{sj}=S_{tj}$. Recall that $J$ in the basis restricted to $\mk_{\l_j-\l_k}\oplus \mk_{\l_j+\l_k}$ has a matrix of the form in Eq. \eqref{Jkjform} in the appropriate basis. In order $J$ to commute with $\ad(Z)$ above,  we need
$$\varepsilon_{tj}S_{tj}=JA_{tj}=J \ad(Z) A_{sj}= \ad(Z)J A_{sj}=\ad(Z)\varepsilon_{sj}S_{sj}=\varepsilon_{sj}S_{tj}. $$ Thus $\varepsilon_{sj}=\varepsilon_{tj}$ for all $l\geq s,t\geq d+1$, and we define $\varepsilon_j$ this value. We have then $JA_{kj}=\varepsilon_{j}S_{kj}$ and $JS_{kj}=-\varepsilon_{j}A_{kj}$ for all $d<k\leq l$.

Next we prove that this condition is sufficient for $J$ to commute with  the adjoint of elements in $\mk_\Theta$. Indeed, for $j,s,t$ as above, we only have left to verify that $J$ commutes with matrices $Z$ as in Eq. \eqref{kthetaform} with with $X=i(E_{ts}+E_{st})$. We consider $D$ as in Eq. \eqref{Wform} with $C=E_{sj}$, then $XC=iE_{tj}$ and we obtain $\mbox{ad}\mbox(Z)A_{sj}=S_{tj}$. Likewise, if $C=iE_{sj}$, then $XC=-E_{tj}$ and
thus  $\mbox{ad}\mbox(Z)S_{sj}=-A_{tj}$. Therefore 
\begin{equation*}
\mbox{ad}\mbox(Z)JA_{sj}=\varepsilon_{j}\mbox{ad}\mbox(Z)S_{sj}=-%
\varepsilon_{j}A_{tj}=JS_{tj}=J\mbox{ad}\mbox(Z)A_{sj}
\end{equation*}
and 
\begin{equation*}
\mbox{ad}\mbox(Z)JS_{sj}=-\varepsilon_{j}\mbox{ad}\mbox(Z)A_{sj}=-%
\varepsilon_{j}S_{tj}=-JA_{tj}=J\mbox{ad}\mbox(Z)S_{sj}.
\end{equation*}
\end{proof}

\begin{observation}
The set of $K$ invariant almost complex
structures in the  flags $\F_\Theta$ of the proposition is parametrized by
$\mbox{Gl}\mbox(d-1,\mathbb{R})/ \mbox{Gl}\mbox(d-1/2,\mathbb{C})\times (%
\mathbb{R}^2\cup \mathbb{R}^2 )^{d(d-1)}\times \mathbb{Z}_2^{d}.$

The component $\mbox{Gl}\mbox(d-1,\mathbb{R})/ \mbox{Gl}\mbox(d-1/2,\mathbb{C%
})$ corresponds to the complex structures on the space generated by long roots outside $\langle \Theta\rangle^+$. The component $(\mathbb{R}^2\cup \mathbb{R}^2
)^{d(d-1)}$ corresponds to the structures on the spaces generated by the roots $\{\lambda_j-\lambda_k,\lambda_j+\lambda_k\}$.
The set $\mathbb{R}^2\cup \mathbb{R}^2 $ is the disjoint union of the two copies
of $\mathbb{R}^2$, that is $\mbox{Gl}\mbox(2,\mathbb{R})/ \mbox{Gl}\mbox(1,%
\mathbb{C})$. Finally, $\mathbb{Z}_2^{(d-1)}$ parametrizes the signs $\varepsilon_{j}$.
\end{observation}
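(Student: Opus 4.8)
The plan is to pin down all the freedom in a $K_\Theta$-invariant complex structure $J$ on $\mm_\Theta$ --- equivalently, in a $K$-invariant almost complex structure on $\F_\Theta$ --- and to show that it splits as an independent product matching the three factors in the statement. First I would use $M$-invariance: a $K_\Theta$-invariant $J$ is in particular $M$-invariant, so Proposition~\ref{propminvar} makes it a direct sum of complex structures on the $M$-isotypic pieces $V_{[\alpha]}$ of $\mm_\Theta$. Reading off the $M$-equivalence classes, these pieces are the planes $\mathfrak{k}_{\lambda_i-\lambda_j}\oplus\mathfrak{k}_{\lambda_i+\lambda_j}$ ($1\le i\le d$, $i<j\le l$) together with the long-root block $\mathfrak{k}_{2\lambda_1}\oplus\cdots\oplus\mathfrak{k}_{2\lambda_d}$; grouping them according to \eqref{mthetadescminv} shows that $J$ preserves $W_R$ and each $W_j$, so $J=J|_{W_R}\oplus J|_{W_1}\oplus\cdots\oplus J|_{W_d}$ with no mixing among these summands.

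Next I would analyse $J|_{W_R}$. Since $[Y,X]=0$ for all $Y\in\mk_\Theta$ and $X\in W_R$, condition~\eqref{kthetainv} is vacuous on $W_R$, so the \emph{only} constraint on $J|_{W_R}$ is $M$-invariance; by the previous step this is an arbitrary direct sum of complex structures on the planes $\mathfrak{k}_{\lambda_i-\lambda_j}\oplus\mathfrak{k}_{\lambda_i+\lambda_j}$ ($i<j\le d$) inside $W_R$ and on the long-root block, which is even-dimensional by the hypothesis on $\Theta$. A complex structure on one such plane, in the basis $\{A_{ij},S_{ij}\}$, is exactly a matrix of the form~\eqref{eq.jota}, hence is given by a pair $(a,c)$ with $c\ne0$; this parameter space is $\mathrm{Gl}(2,\R)/\mathrm{Gl}(1,\C)$, the disjoint union of the two copies of $\R^2$ labelled by the sign of $c$. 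A complex structure on the long-root block, of dimension the number $n$ of long roots lying outside $\langle\Theta\rangle^+$, ranges over $\mathrm{Gl}(n,\R)/\mathrm{Gl}(n/2,\C)$, since $\mathrm{Gl}(n,\R)$ acts transitively on the complex structures of $\R^n$ with stabiliser $\mathrm{Gl}(n/2,\C)$.

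Then I would read off $J|_W$ from Lemma~\ref{aacd}(2): an $M$-invariant $J$ is $K_\Theta$-invariant on $W=W_1\oplus\cdots\oplus W_d$ exactly when for each $j$ there is a single sign $\varepsilon_j=\pm1$ with $JA_{kj}=\varepsilon_j S_{kj}$ and $JS_{kj}=-\varepsilon_j A_{kj}$ for all $d<k\le l$; thus $J|_W$ is rigid and contributes one factor $\Z_2$ per $W_j$. Conversely, I would check that any admissible $J|_{W_R}$ together with any tuple of signs assembles to a genuine $K_\Theta$-invariant complex structure on $\mm_\Theta$: $J^2=-1$ holds blockwise, $M$-invariance holds because $\mathrm{Ad}(m)=\pm1$ on each $M$-isotypic block, and $\ad_XJ=J\ad_X$ for $X\in\mk_\Theta$ holds blockwise --- vacuously on $W_R$, and on each $W_j$ precisely by the $\varepsilon_j$ condition, as in the proof of Lemma~\ref{aacd}. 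Collecting the three families over the appropriate index sets then reproduces the stated parametrisation.

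None of these steps needs a serious computation; the one delicate point is the bookkeeping of the isotropy decomposition. One has to be sure that $M$-invariance by itself already forces the fine block-diagonal form (so that no cross terms among the several $V_{[\alpha]}$ survive), carefully distinguish the summand $W_R$ on which $J$ is free from the summands $W_j$ on which $J$ is entirely determined by a single sign, and keep the index conventions straight --- in particular reconcile the label $d$ of this subsection with the shifted one of Theorem~\ref{teo1}, which is what fixes the number of long roots outside $\langle\Theta\rangle^+$ and of signs $\varepsilon_j$ appearing in the formula. Once that is settled, the parametrisation is simply the product of the spaces of complex structures on the three types of building block.
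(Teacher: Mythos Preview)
Your proposal is correct and follows exactly the reasoning implicit in the paper: the Remark is not given a separate proof there but is simply read off from Proposition~\ref{propminvar} (for the $M$-invariant block decomposition and the freedom on $W_R$) together with Lemma~\ref{aacd}(2) (for the rigidity on each $W_j$ up to a sign $\varepsilon_j$), precisely as you outline. Your caveat about the index bookkeeping is well placed --- the Remark itself mixes the conventions of Theorem~\ref{teo1} and of this subsection, which is why the exponents and the Gl dimensions look slightly off.
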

\smallskip

We introduce  two  technical lemmas which will lead to the determination of the integrable structures.

\begin{lemma}\label{lema2} Let $J$ be a $K_\Theta$-invariant almost complex structure. If $J$ is integrable then for each $i,j\in\{1,\ldots,d\}$, $j>i$, we have $JA_{ji}=c_{ji}S_{ji}$ and $JS_{ji}=-c_{ji}A_{ji}$, with $c_{ji}=\pm 1$.
\end{lemma}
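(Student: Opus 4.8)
The plan is to pin down $J$ on the plane $\spam\{A_{ji},S_{ji}\}$ by evaluating the Nijenhuis tensor on the single pair $(A_{ji},D_j)$, where $D_j$ denotes the generator of $\mathfrak{k}_{2\lambda_j}$. I would work in the realization $\mathfrak{k}\simeq\mathfrak{u}(l)$ fixed above, so that $\spam\{A_{ji},S_{ji}\}=\mathfrak{k}_{\lambda_i-\lambda_j}\oplus\mathfrak{k}_{\lambda_i+\lambda_j}$ and $D_j$ is the imaginary diagonal matrix $iE_{jj}$; the matrices $D_1,\dots,D_d$ span $\mathfrak{k}_{2\lambda_1}\oplus\cdots\oplus\mathfrak{k}_{2\lambda_d}$. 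By Lemma~\ref{aacc} a $K_\Theta$-invariant (hence $M$-invariant) $J$ leaves this last subspace invariant and leaves $\spam\{A_{ji},S_{ji}\}$ invariant, so one may write $JA_{ji}=a_{ji}A_{ji}+c_{ji}S_{ji}$, $JS_{ji}=b_{ji}A_{ji}-a_{ji}S_{ji}$ with $a_{ji}^2+b_{ji}c_{ji}=-1$ (and $c_{ji}\neq0$, since $J^2=-1$ forbids a real eigenvalue), and $JD_j=\sum_{m=1}^d s_mD_m$. The lemma amounts to proving $a_{ji}=0$ and $c_{ji}^2=1$, for then $b_{ji}=-1/c_{ji}=-c_{ji}$.

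Next I would compute $N_J(A_{ji},D_j)=[JA_{ji},JD_j]-[A_{ji},D_j]-J[JA_{ji},D_j]-J[A_{ji},JD_j]$. All needed brackets are read off directly from the matrices and they lie in $\mathfrak{m}_\Theta$ (so no projection is involved): $[A_{ji},D_m]$ and $[S_{ji},D_m]$ vanish for $m\notin\{i,j\}$, while for $m\in\{i,j\}$ they rotate the plane $\spam\{A_{ji},S_{ji}\}$, e.g.\ $[A_{ji},D_j]=-S_{ji}$, $[S_{ji},D_j]=A_{ji}$, with the $D_i$-brackets of the opposite sign, and $[D_m,D_n]=0$. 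Substituting and collecting the $A_{ji}$- and $S_{ji}$-components, integrability of $J$ becomes the two scalar identities
\begin{equation*}
\sigma\,(b_{ji}+c_{ji})=a_{ji}\,(b_{ji}-c_{ji}),\qquad 2a_{ji}\,\sigma=a_{ji}^2+c_{ji}^2-1,
\end{equation*}
where $\sigma=s_i-s_j$ carries the relevant coordinates of $JD_j$.

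To finish, I would eliminate the auxiliary unknowns: replace $b_{ji}=-(1+a_{ji}^2)/c_{ji}$ from the normalization, and substitute $\sigma$ from the second identity into the first (this is legitimate when $a_{ji}\neq0$; if $a_{ji}=0$ the second identity already gives $c_{ji}^2=1$). The system then collapses to
\begin{equation*}
(a_{ji}^2+c_{ji}^2)^2-2c_{ji}^2+1=-2a_{ji}^2 ,
\end{equation*}
whose left side equals $(c_{ji}^2-1)^2+a_{ji}^2(a_{ji}^2+2c_{ji}^2)\ge0$ and whose right side is $\le0$; both must vanish, and since $c_{ji}\neq0$ this forces $a_{ji}=0$ and $c_{ji}^2=1$. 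Hence $JA_{ji}=c_{ji}S_{ji}$ and $JS_{ji}=-c_{ji}A_{ji}$ with $c_{ji}=\pm1$, as stated.

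The step I expect to be most delicate is the choice of test pair. Nijenhuis values of two vectors inside the $J$-stable plane $\spam\{A_{ji},S_{ji}\}$ vanish identically, and those of $A_{ji}$ against a generator of some $W_k$ with $k>d$ constrain $a_{ji},c_{ji}$ only when the signs $\varepsilon_i,\varepsilon_j$ of Lemma~\ref{aacc} disagree; it is the long-root direction $D_j\in\mathfrak{k}_{2\lambda_j}$ that yields a constraint in all cases, at the cost of dragging in the a priori unknown restriction of $J$ to $\mathfrak{k}_{2\lambda_1}\oplus\cdots\oplus\mathfrak{k}_{2\lambda_d}$, which is exactly where the parameter $\sigma$ comes from. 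The arithmetic to be careful with is checking that $\sigma$ (and $b_{ji}$) drop out leaving the clean sign-definite identity above.
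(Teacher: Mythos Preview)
Your approach is essentially the same as the paper's: both prove the lemma by evaluating the Nijenhuis tensor on $A_{ji}$ paired with a diagonal generator of the long-root block $\mathfrak{k}_{2\lambda_1}\oplus\cdots\oplus\mathfrak{k}_{2\lambda_d}$ (the paper uses $S_{ii}=2iE_{ii}$, you use $D_j=iE_{jj}$, which by the $i\leftrightarrow j$ symmetry of the brackets yields the same pair of scalar equations up to sign of the auxiliary parameter). The paper simply asserts that $N_J(S_{ii},A_{ji})=0$ is equivalent to $a_{ji}=0$, $c_{ji}=\pm1$ without carrying out the elimination of the unknown coefficients $b_{ki}$ of $J$ on the long-root block; your explicit elimination via the substitution $b_{ji}=-(1+a_{ji}^2)/c_{ji}$ and the sign-definite identity $(c_{ji}^2-1)^2+a_{ji}^2(a_{ji}^2+2c_{ji}^2)=-2a_{ji}^2$ fills that gap cleanly, so your write-up is in fact more complete than the paper's at this step.
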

\begin{proof}
Take $1\leq i<j\leq d$ then by $M$-invariance 
$JS_{ii}=\sum_k b_{ki}S_{kk}$ and 
$$J|_{\{A_{ji},S_{ji}\}}=
\left(\begin{matrix}
a_{ji}&-\frac{1+a_{ji}^2}{c_{ji}}\\
c_{ji}&-a_{ji}
\end{matrix}\right)\mbox{ where } c_{ji}\neq 0.
$$
We have
\begin{eqnarray*}
&N_J(S_{ii},A_{ji})
=A_{ji}\left(2c_{ji}(b_{ii}-b{ji})+2(b_{ji}-b_{ii})\frac{(1+a_{ji}^2)}{c_{ji}}-2c_{ji}a_{ji}-2a_{ji}\frac{(1+a_{ji}^2)}{c_{ji}}\right)&\\
&\qquad\qquad +S_{ji}\left(
2a_{ji}(b_{ji}-b_{ii})+2+2a_{ji(b_ji}-b_{ii})-2(a_{ji}^2+c_{ji} ^2)\right)&
\end{eqnarray*}
Therefore
\begin{eqnarray*}
N_J(S_{ii},A_{ji})=0 
&\Leftrightarrow &\left\{
\begin{array}{l}
a_{ji}=0\\
c_{ji}=\pm 1\end{array}\right..
\end{eqnarray*}
\end{proof}

Up to this moment we have proved that if $J$ is $K_\Theta$-invariant and integrable then for each $j=1,\ldots,d$:
\begin{itemize}
\item $JA_{kj}=c_{kj}S_{kj}$ and $JS_{kj}=-c_{kj}A_{kj}$ for $k=1,\ldots,d$, $k\neq j$ and 
\item $JA_{kj}=\varepsilon_{j}S_{kj}$
and $JS_{kj}=-\varepsilon_{j}A_{kj}$ for all $k=d+1,\ldots, l$.
\end{itemize} where $\varepsilon_j,c_{kj}\in\{\pm 1\}$. To simplify notation in the following lemma we write
\begin{equation}\label{eq.Es}
JA_{kj}=\mu_{kj}S_{kj}, \quad JS_{kj}=-\mu_{kj}A_{kj} \mbox{for all } j=1,\ldots,d, \,j< k\neq l.
\end{equation}

\begin{lemma}
\label{propes} Let $J$ be a $K_\Theta$-invariant (integrable) complex structure. Then for any triple $k>j>s$ such that  $j,s\in\{1,\ldots,d\}$ the possible
values for $(\mu_{ks},\mu_{kj},\mu_{js})$ are:
$$(\mu_{ks},\mu_{ks},\mu_{ks}), \;(\mu_{ks},-\mu_{ks},\mu_{ks})\,\mbox{ and }(\mu_{ks},\mu_{ks},-\mu_{ks})., \quad \mu_{ks}=\pm1.$$
In particular, if $\varepsilon_j=-\varepsilon_s$  then $c_{js}=\varepsilon_s$.
\end{lemma}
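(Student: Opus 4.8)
The plan is to compute the Nijenhuis tensor $N_J$ on pairs of root vectors drawn from the three subspaces $\mathfrak{k}_{\lambda_s-\lambda_k}\oplus\mathfrak{k}_{\lambda_s+\lambda_k}$, $\mathfrak{k}_{\lambda_j-\lambda_k}\oplus\mathfrak{k}_{\lambda_j+\lambda_k}$ and $\mathfrak{k}_{\lambda_s-\lambda_j}\oplus\mathfrak{k}_{\lambda_s+\lambda_j}$ attached to the triple $k>j>s$ with $j,s\in\{1,\ldots,d\}$. By \eqref{eq.Es} we already know that on each such two-dimensional piece $J$ acts as $JA_{\bullet}=\mu_{\bullet}S_{\bullet}$, $JS_{\bullet}=-\mu_{\bullet}A_{\bullet}$ with $\mu\in\{\pm1\}$, so the only freedom left is the triple of signs $(\mu_{ks},\mu_{kj},\mu_{js})$, and the claim is purely a statement that $N_J=0$ forbids the fourth combination, namely $\mu_{kj}=\mu_{js}=-\mu_{ks}$ (equivalently, forbids exactly one sign differing the ``wrong'' way). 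So I would fix $\mu_{ks}=\varepsilon$ and show that $N_J\big(X,Y\big)\neq0$ for some $X\in\mathfrak{k}_{\lambda_s-\lambda_j}\oplus\mathfrak{k}_{\lambda_s+\lambda_j}$, $Y\in\mathfrak{k}_{\lambda_j-\lambda_k}\oplus\mathfrak{k}_{\lambda_j+\lambda_k}$ precisely when $\mu_{kj}=\mu_{js}=-\varepsilon$.

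The concrete computation uses the bracket relations in $\mathfrak{u}(l)$: from $A_{sj}=E_{sj}-E_{js}$ and $A_{jk}=E_{jk}-E_{kj}$ one gets $[A_{sj},A_{jk}]=E_{sk}-E_{ks}=A_{sk}$ (with the appropriate sign bookkeeping for the index order $k>j>s$), and similarly $[A_{sj},S_{jk}]=\pm S_{sk}$, $[S_{sj},A_{jk}]=\pm S_{sk}$, $[S_{sj},S_{jk}]=\pm A_{sk}$, while brackets that would land outside these three subspaces vanish. Feeding, say, $X=A_{sj}$ and $Y=A_{jk}$ into
$$N_J(X,Y)=[JX,JY]-[X,Y]-J[JX,Y]-J[X,JY]$$
and substituting $JA_{sj}=\mu_{js}S_{sj}$, $JA_{jk}=\mu_{kj}S_{kj}$, $JA_{sk}=\mu_{ks}S_{sk}$, one collects the result as a combination of $A_{sk}$ and $S_{sk}$ whose coefficients are polynomials (in fact affine functions) in the $\mu$'s; each $\mu^2=1$ simplification is applied. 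The resulting two scalar conditions ``coefficient of $A_{sk}$ $=0$'' and ``coefficient of $S_{sk}$ $=0$'' then translate into relations among $\mu_{ks},\mu_{kj},\mu_{js}$ that rule out exactly the forbidden fourth sign pattern; one checks directly that each of the three allowed triples does make $N_J$ vanish on this pair (and, by symmetry of the computation, on every pair from these subspaces). The last sentence of the lemma is then immediate: if $\varepsilon_j=-\varepsilon_s$, then taking $k>d$ we have $\mu_{kj}=\varepsilon_j$ and $\mu_{ks}=\varepsilon_s=-\varepsilon_j=-\mu_{kj}$; comparing with the allowed list forces $\mu_{js}=\mu_{ks}=\varepsilon_s$, and since $\mu_{js}=c_{js}$ by \eqref{eq.Es}, we conclude $c_{js}=\varepsilon_s$.

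The main obstacle I anticipate is purely organizational: getting the signs in the structure constants right. The bracket $[A_{sj},A_{jk}]$ and its siblings depend on the cyclic order of $s,j,k$ and on the choice of orientation in $A_{\bullet}=E_{\bullet}-E_{\bullet}$ versus $S_{\bullet}=i(E_{\bullet}+E_{\bullet})$, so a careful, once-and-for-all table of the eight brackets among $\{A_{sj},S_{sj},A_{jk},S_{jk}\}$ landing in $\{A_{sk},S_{sk}\}$ is what the whole argument rests on; once that table is fixed the Nijenhuis computation is a short, mechanical substitution. A secondary point to be careful about is that $N_J$ must be checked to vanish (for the three good patterns) on \emph{all} pairs of basis vectors from the triple of subspaces, not just on $(A_{sj},A_{jk})$ — but since $J$ acts the same way on the $A$'s and $S$'s up to the sign $\mu$, these further checks are variations of the same computation and do not introduce new constraints.
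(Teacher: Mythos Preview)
Your approach is essentially the paper's: compute $N_J$ on a pair of basis vectors from two of the three two-dimensional pieces, observe the result lands in the third, and read off the single sign constraint on $(\mu_{ks},\mu_{kj},\mu_{js})$ that rules out exactly the pattern $\mu_{kj}=\mu_{js}=-\mu_{ks}$. The paper feeds in the pair $(A_{kj},A_{ks})$ (common index $k$, result in $A_{js}$) rather than your pair sharing index $j$, but by symmetry of the bracket structure in $\mathfrak{u}(l)$ the two choices give the same scalar condition $1+\mu_{kj}\mu_{js}-\mu_{kj}\mu_{ks}-\mu_{ks}\mu_{js}=0$, and your derivation of the ``in particular'' clause is the intended one. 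One small correction to your expectations: after applying $J$ to the mixed brackets $[A,S]$ and $[S,A]$ everything lands back in the $A$-line, so there is only one nonzero scalar coefficient, not two --- but this only simplifies your computation.
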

\begin{proof}
By equation \eqref{eq.Es} we obtain
\begin{eqnarray*}
0\,=\,N_J(A_{kj},A_{ks})&=&\left(1+\mu_{kj}\mu_{js}-
\mu_{kj}\mu_{ks}-\mu_{ks}\mu_{js}\right)A_{js}\\
&=&\left((\mu_{kj}-\mu_{ks})\mu_{js}+(
\mu_{ks}-\mu_{kj})\mu_{ks}\right)A_{js}\\
&=&\left((\mu_{js}-\mu_{ks})\mu_{kj}+(
\mu_{js}-\mu_{ks})\mu_{js}\right)A_{js}.
\end{eqnarray*}
From the second row of this equation we see that  $\mu_{kj}=-\mu_{ks}$ implies $\mu_{js}=\mu_{ks}$; while the third row implies $\mu_{kj}=-\mu_{js}=\mu_{ks}$ if  $\mu_{js}=-\mu_{ks}$.
We conclude then that the possible values for the triple $(\mu_{ks},\mu_{kj},\mu_{js})$ are:
$(\mu_{ks},\mu_{ks},\mu_{ks})$, $(\mu_{ks},-\mu_{ks},\mu_{ks})$ and $(\mu_{ks},\mu_{ks},-\mu_{ks})$.
\end{proof}

\begin{proposition} Let $J:\mm_\Theta\lra \mm_\Theta$ be such that $J^2=-1$ and moreover it preserves $\mk_{2\l_1}\oplus\cdots\oplus\mk_{2l_d}$ and  $JA_{kj}=\mu_{kj}S_{kj}$, $JS_{kj}=-\mu_{kj}A_{kj}$ for all $j=1,\ldots,d$, $j<k\leq d$, with $\mu_{kj}=\pm 1$.

Then $J$ is $K_\Theta$-invariant and integrable if and only if the following hold:
\begin{itemize}
\item for each $j=1,\ldots,d$, $\mu_{kj}=\varepsilon_j$ for all $k=d+1,\ldots,l$.
\item for each triple $k>j>s$ such that $j,s\in\{1,\ldots,d\}$ the coefficients $(\mu_{ks},\mu_{kj},\mu_{js})$ are one of the following:
$$(\mu_{ks},\mu_{ks},\mu_{ks}), \;(\mu_{ks},-\mu_{ks},\mu_{ks})\,\mbox{ and }(\mu_{ks},\mu_{ks},-\mu_{ks}).$$

Conversely, any $K$-invariant complex structure on $\F_\Theta$ is induced by $J$ as above.
\end{itemize}
\end{proposition}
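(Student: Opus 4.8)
The plan is to assemble the proposition from the three lemmas already proved, treating it as a clean summary statement rather than requiring new computations. First I would recall the structure decomposition $\mm_\Theta=W_R\oplus W_1\oplus\cdots\oplus W_d$ and note that, by Lemma \ref{aacc}, a $J$ with $J^2=-1$ preserving $\mk_{2\l_1}\oplus\cdots\oplus\mk_{2\l_d}$ and satisfying $JA_{kj}=\mu_{kj}S_{kj}$, $JS_{kj}=-\mu_{kj}A_{kj}$ for all relevant $j<k$ is automatically $M$-invariant: the $M$-equivalence classes force exactly these block-diagonal constraints. So the only content left is to characterize when such a $J$ is (i) $K_\Theta$-invariant and (ii) integrable.

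For the forward direction, I would argue: if $J$ is $K_\Theta$-invariant then part 2 of Lemma \ref{aacc} gives the sign $\varepsilon_j$ with $JA_{kj}=\varepsilon_j S_{kj}$ for all $d<k\le l$, i.e.\ $\mu_{kj}=\varepsilon_j$ for $k>d$, which is the first bullet. If in addition $J$ is integrable, then Lemma \ref{propes} applied to each triple $k>j>s$ with $j,s\in\{1,\ldots,d\}$ gives exactly the three allowed patterns for $(\mu_{ks},\mu_{kj},\mu_{js})$, which is the second bullet. (Here one also needs Lemma \ref{lema2} to know that $J$ restricted to $\{A_{ji},S_{ji}\}$ for $i,j\le d$ already has the normalized form $JA_{ji}=c_{ji}S_{ji}$ with $c_{ji}=\pm1$, so that the notation $\mu_{kj}$ of \eqref{eq.Es} is legitimate in the first place; I would mention this explicitly since the proposition's hypothesis assumes the $\mu_{kj}$ form on the block $j<k\le d$ but the reader should see it is not an extra restriction.)

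For the converse direction, assume the two bullet conditions hold. The first bullet means $J$ has precisely the form described in part 2 of Lemma \ref{aacc}, so $J$ is $K_\Theta$-invariant. For integrability I would observe that the Nijenhuis tensor need only be checked on pairs of root vectors, and by the bracket relations in $C_l$ the only nonvanishing brackets among the $A_{kj},S_{kj}$ occur within the patterns already analyzed in the proofs of Lemmas \ref{lema2} and \ref{propes}: namely $N_J(S_{ii},A_{ji})$, $N_J(A_{kj},A_{ks})$ and the symplectic-type brackets producing long roots. Each of these vanishes precisely under the stated sign conditions (the computations are exactly those displayed in the two lemmas), and all other pairs bracket to zero or land outside $\mm_\Theta$ after projection, so $N_J\equiv 0$. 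I would phrase this as: the verification reduces to the finitely many families of bracket computations carried out in Lemmas \ref{lema2} and \ref{propes}, each of which has already been shown to vanish under exactly these hypotheses.

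The main obstacle is the integrability-sufficiency step: one must be confident that the bracket computations in Lemmas \ref{lema2} and \ref{propes} are \emph{exhaustive}, i.e.\ that no pair of basis vectors of $\mm_\Theta$ produces a Nijenhuis obstruction outside the listed families. This requires a careful bookkeeping of which pairs of roots $\alpha,\beta$ with $\alpha,\beta\in\Pi^+\setminus\langle\Theta\rangle^+$ have $\alpha+\beta$ or $\alpha-\beta$ again in $\Pi^+\setminus\langle\Theta\rangle^+$ (so that the bracket contributes to $N_J$ inside $\mm_\Theta$), versus in $\langle\Theta\rangle$ (so the bracket lies in $\mk_\Theta$ and is killed by the projection). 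In $C_l$ this is a finite, type-by-index case check; I would organize it by the position of indices relative to the cut $d$, and remark that the essential new phenomenon — the long-root interactions — is precisely what forces the rigid sign patterns and was already the crux of Lemma \ref{propes}. The remaining assertion, that conversely every $K$-invariant complex structure on $\F_\Theta$ arises this way, is immediate from the fact (established before Lemma \ref{aacc} via Proposition \ref{propminvar}) that any such structure is $M$-invariant hence has the assumed block form.
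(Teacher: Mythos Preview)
Your approach matches the paper's: necessity is pulled directly from Lemmas \ref{aacd}, \ref{lema2}, and \ref{propes}, and sufficiency is argued by checking $N_J$ on a spanning set of basis pairs. The one place where the paper is more concrete than your sketch is the sufficiency of integrability. You assert that the computations in Lemmas \ref{lema2} and \ref{propes} are exhaustive; the paper instead explicitly lists four additional families that must be checked --- $N_J(S_{kk},S_{kj})$, $N_J(S_{kj},S_{ks})$, $N_J(S_{kj},A_{ks})$, and $N_J(S_{jj},S_{ss})$ --- and disposes of each briefly (the second equals $N_J(A_{kj},A_{ks})$; the fourth is trivial since diagonal matrices commute; the first is parallel to Lemma \ref{lema2}; the mixed case $N_J(S_{kj},A_{ks})$ gives $(-1-\mu_{kj}\mu_{js}+\mu_{kj}\mu_{ks}+\mu_{ks}\mu_{js})S_{js}$, which is a new computation analogous to but not identical with Lemma \ref{propes}). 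So your sentence ``each of which has already been shown to vanish under exactly these hypotheses'' is slightly overstated: your ``careful bookkeeping'' paragraph correctly identifies this as the obstacle, but the paper actually carries out that enumeration rather than deferring to the lemmas.
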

\begin{proof} It is necessary for $J$ to be $M$-invariant to  preserve $\mk_{2\l_1}\oplus\cdots\oplus\mk_{2l_d}$ and $\mk_{\l_j-\l_k}\oplus \mk_{\l_j+\l_k}$.  
The conditions above are necessary as proved in Lemma \ref{aacd} in order $J$ to be $K_\Theta$-invariant and Lemma \ref{lema2} and Lemma \ref{propes} to be integrable. As seen there, such $J$ verifies
$N_J(S_{kk},A_{kj})=0$ $j=1,\ldots,d$, $j<k\leq l$ and $N_J(A_{kj},A_{ks})=0$ for each triple in the second item. To show that these conditions are sufficient we have to show that 
i) $N_J(S_{kk},S_{kj})=0$, ii) $N_J(S_{kj},S_{ks})=0$, iii) $N_J(S_{kj},A_{ks})=0$ and iv) $N_J(S_{jj},S_{ss})=0$ for all $j>s\in\{1,\ldots,d\}$ and $k>j>s$.

Clearly iv) holds since these matrices are diagonal. Moreover, $N_J(A_{kj},A_{ks})=N_J(S_{kj},S_{ks})$ so ii) also holds. Similar computations as in the proof of Lemma \ref{lema2} give i). Finally $N_J(S_{kj},A_{ks})=\left(-1-\mu_{kj}\mu_{js}+
\mu_{kj}\mu_{ks}+\mu_{ks}\mu_{js}\right)S_{js}$ so reasoning as in Lemma \ref{propes} one obtains iii).
\end{proof}

\begin{example}
We consider the flag $\mathbb{F}_{\Theta}$ of $C_3$, with $\Theta=\{2\lambda_3\} $. 
The component $W_R$ of tangent space at the origin of
flag is given by sum of $\mathfrak{k}_{\alpha}$, $\alpha\in R$, and has the
following form: $R=\{\lambda_1\pm\lambda_2\} \cup \{2\lambda_1,2\lambda_2\}$%
. The components $W_j$ are determined by the sets of roots
\begin{equation*}
\Pi_1 =  \{\lambda_1\pm\lambda_3\},\quad \Pi_2 =  \{\lambda_2\pm\lambda_3\}.
\end{equation*}

Fix $\varepsilon_j=\pm1$ $j=1,2$ $\nu=\pm1$ such that
$$(\varepsilon_1,\varepsilon_2,\nu)\in\{(1,1,1),(-1,-1,-1),(1,-1,1),(-1,1,-1),(1,1,-1),(-1,-1,1)\},$$ and let $a_{11},c_{11}\in \R$ s.t. $c_{11}\neq 0$. The following table gives all $K_\Theta$-invariant integrable complex structures $J$ in  $\F_\Theta$.
\begin{table}[h]
\label{tab3}
\centering
\begin{tabular}{|c|c|}
\hline
Components & $K_{\Theta}$-invariant complex structures  \\ 
\hline\hline
$W_1$ & $JA_{31}=\varepsilon_{1}S_{31}$, $JS_{1}=-\varepsilon_{1}A_{31}$, \\ \hline
$W_2$ & $JA_{32}=\varepsilon_{2}S_{32}$, $JS_{32}=-\varepsilon_{2}A_{32}$  \\ \hline
 & $JA_{21}=\nu S_{21}$, $JS_{21}=-\nu A_{21}$, \\
$W_R$&$JS_{11}=a_{11}S_{11}+c_{11}S_{22}$,\\
& $JS_{22}=-\frac{1+a_{11}^2}{c_{11}} S_{11}-a_{11}S_{22}$ 
\\ \hline
\end{tabular}%
\end{table}
\end{example}

\subsubsection{Case $C_4$}  The $M$-equivalence classes of positive roots are 
$$\{\lambda_1\pm\lambda_2,\lambda_3\pm\lambda_4\}, \;\{\lambda_1\pm\lambda_3,\lambda_2\pm\lambda_4\}
\{\lambda_1\pm\lambda_4,\lambda_2\pm\lambda_3\},\;\{2\lambda_1, 2\lambda_2, 2\lambda_3, 2\lambda_4\}.$$ 

\begin{proposition} The real flag $\F_\Theta$ of $C_4$ with $\Theta= \{\lambda_1-\lambda_2, \lambda_3-\lambda_4\}$ does not admit $K$-invariant almost complex structures.
\end{proposition}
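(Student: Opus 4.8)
The plan is to derive a contradiction from the two necessary conditions \eqref{Valphainv} and \eqref{kthetainv} satisfied by any $K_\Theta$-invariant complex structure $J\colon\mn_\Theta^-\to\mn_\Theta^-$: I will produce from such a $J$ a nonzero vector $w$ with $Jw\in\R\,w$, which is impossible since the eigenvalues of $J$ are $\pm i$.

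First I would describe $\mk_\Theta$. The two roots of $\Theta=\{\l_1-\l_2,\l_3-\l_4\}$ are non-adjacent nodes of the Dynkin diagram of $C_4$, so $\langle\Theta\rangle=\pm\{\l_1-\l_2,\l_3-\l_4\}$ and $\mk_\Theta=\mt_{\l_1-\l_2}\oplus\mt_{\l_3-\l_4}$ is two-dimensional. Consider $A\ce X_{\l_1-\l_2}-X_{-(\l_1-\l_2)}\in\mt_{\l_1-\l_2}\subset\mk_\Theta$; by \eqref{kthetainv}, $\ad(A)$ commutes with $J$. I would then evaluate $\ad(A)$ on the two root vectors spanning the relevant $M$-class: since neither $(\l_1-\l_2)-\l_3-\l_4$ nor $-(\l_1-\l_2)-\l_3-\l_4$ is a root of $C_4$, one has $\ad(A)X_{-\l_3-\l_4}=0$; and since $(\l_1-\l_2)+(-\l_1-\l_2)=-2\l_2$ and $-(\l_1-\l_2)+(-\l_1-\l_2)=-2\l_1$ are roots, $w\ce\ad(A)X_{-\l_1-\l_2}$ is a nonzero element of $\mg_{-2\l_1}\oplus\mg_{-2\l_2}$ (both structure constants are nonzero).

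Finally, $\{-\l_1-\l_2,-\l_3-\l_4\}$ is an $M$-equivalence class in $\Pi^-\setminus\langle\Theta\rangle^-$, so by \eqref{Valphainv} the plane $\mg_{-\l_1-\l_2}\oplus\mg_{-\l_3-\l_4}$ is $J$-invariant; write $JX_{-\l_1-\l_2}=\alpha X_{-\l_1-\l_2}+\beta X_{-\l_3-\l_4}$ with $\alpha,\beta\in\R$. Then
\begin{align*}
Jw &= J\,\ad(A)X_{-\l_1-\l_2}=\ad(A)\,JX_{-\l_1-\l_2}\\
   &= \alpha\,\ad(A)X_{-\l_1-\l_2}+\beta\,\ad(A)X_{-\l_3-\l_4}=\alpha\,w,
\end{align*}
so $w\neq0$ is a real eigenvector of $J$, contradicting $J^2=-1$; hence $\F_\Theta$ admits no $K$-invariant almost complex structure. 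I expect this to be routine; the only point needing care is the bookkeeping of which sums $\pm(\l_1-\l_2)\pm\mu$ are roots of $C_4$ — i.e.\ confirming that $\ad(A)$ annihilates $X_{-\l_3-\l_4}$ but sends $X_{-\l_1-\l_2}$ to a nonzero combination of $X_{-2\l_1}$ and $X_{-2\l_2}$ — and I do not anticipate any genuine difficulty there.
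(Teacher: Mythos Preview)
Your argument is correct. Both proofs exploit the same underlying incompatibility between the $M$-class $V_{[-\lambda_1-\lambda_2]}=\mg_{-\lambda_1-\lambda_2}\oplus\mg_{-\lambda_3-\lambda_4}$ and the $\mk_\Theta$-action, but they reach the contradiction differently. The paper invokes the full $K_\Theta$-irreducible decomposition of $\mn_\Theta^-$ from \cite{patrao2015isotropy}: one component $V_1$ is two-dimensional, irreducible, and inequivalent to every other component, so $JV_1=V_1$; intersecting $V_1$ with the $M$-class $V_{[-\lambda_1-\lambda_2]}$ then produces the one-dimensional $J$-invariant line $\langle X_{-\lambda_1-\lambda_2}\rangle$. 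Your route bypasses that classification entirely: a single element $A\in\mt_{\lambda_1-\lambda_2}\subset\mk_\Theta$ already annihilates $X_{-\lambda_3-\lambda_4}$ and sends $X_{-\lambda_1-\lambda_2}$ to a nonzero $w\in\mg_{-2\lambda_1}\oplus\mg_{-2\lambda_2}$, so commutation of $J$ with $\ad(A)$ together with the $M$-invariance of $V_{[-\lambda_1-\lambda_2]}$ forces $Jw=\alpha w$. Your approach is more elementary and self-contained --- it needs only the root-string computation and conditions \eqref{Valphainv}--\eqref{kthetainv}, not the list of irreducible components --- while the paper's approach makes the structural reason (an odd-dimensional intersection of invariant subspaces) more visible and reuses machinery already set up for the other cases.
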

\begin{proof} According to \cite[Section 5.3]{patrao2015isotropy} the $K_\Theta$ irreducible components of $\mn_\Theta^-$ are given by
\begin{eqnarray*}
&\begin{array}{rclrcl}
V_1 & = & \langle X_{2\lambda_1}-X_{2\lambda_2}, X_{-\lambda_2-\lambda_1}\rangle &V_4&=& \langle X_{2\lambda_3}+X_{2\lambda_4} \rangle,\\
V_2&=&\langle X_{2\lambda_1}+X_{2\lambda_2} \rangle,
& V_3 & = & \langle X_{2\lambda_3}-X_{2\lambda_4},X_{-\lambda_4-\lambda_3}\rangle\end{array}&\\
&\begin{array}{rcl}
V_5 & = & \langle X_{\lambda_3-\lambda_1}+X_{\lambda_4-\lambda_2},X_{\lambda_3-\l_2}-X_{\l_4-\l_1}
\rangle\\
V_6&= & \langle X_{\l_3-\l_2}+X_{\l_4-\l_1}, X_{\l_4-\l_2}-X_{\l_3-\l_1} \rangle, \\ 
V_7 & = & \langle X_{-\l_3-\l_1}+X_{-\l_4-\l_2},X_{-\l_3-\l_2}-X_{-\l_4-\l_1}
\rangle \\
V_8&= & \langle X_{-\l_3-\l_2}+X_{-\l_4-\l_1}, X_{-\l_4-\l_2}-X_{-\l_3-\l_1} \rangle, \end{array}
\end{eqnarray*}
where $X_{\alpha}$ is a generator of root space $\mathfrak{g}_{\alpha}$.

The components $V_2$, $V_5$ and $V_6$ are equivalent to the components $V_4$, $V_7$ and $V_8$, respectively. The subspaces $V_1$ and $V_3$ are neither equivalent between them nor to any other representation subspace. 

Assume $J$ is a $K_\Theta$-invariant complex structure $J$ on $\mn_\Theta^-$. Then $JV_1=V_1$ since it is irreducible and non-equivalent no any other representation subspace. 
Moreover, $V_{[-\l_2-\l_1]}=\mgg_{-\l_2-\l_1}\oplus\mgg_{-\l_4-\l_3}$ and $J$ preserves this subspaces too because of its $M$-invariance. Therefore $V_1\cap V_{[-\l_2-\l_1]}=\lela X_{-\lambda_2-\l_1}\rira$ is an invariant subspace of $J$, which is a contradiction. So we conclude that no $K$-invariant complex structure exists in this case.\end{proof}

Fix $\Theta=\{\lambda_3-\lambda_4,2\lambda_4\}$ for $C_4$. The $K_{\Theta}$-irreducible components of $\mm_\Theta$ are  (\cite[Section 5.3]{patrao2015isotropy}): 
\begin{eqnarray}
&
\begin{array}{rclrcl}
V_1 & = & \mathfrak{g}_{-2\lambda_1},&\quad V_3 & = & \mathfrak{g}_{\lambda_2-\lambda_1},\\ 
V_2 & = & \mathfrak{g}_{-2\lambda_2}, & \quad V_4 & = & \mathfrak{g}_{-\lambda_2-\lambda_1},\end{array}&\nonumber\\
&\begin{array}{rcl}
V_5 & = & \mathfrak{g}_{\lambda_3-\lambda_1}%
\oplus \mathfrak{g}_{-\lambda_3-\lambda_1}\oplus \mathfrak{g}%
_{\lambda_4-\lambda_1}\oplus \mathfrak{g}_{-\lambda_4-\lambda_1}, \\ 
V_6 & = & \mathfrak{g}_{\lambda_3-\lambda_2}%
\oplus \mathfrak{g}_{-\lambda_3-\lambda_2}\oplus \mathfrak{g}%
_{\lambda_4-\lambda_2}\oplus \mathfrak{g}_{-\lambda_4-\lambda_2}.
\end{array}\label{descKinvC4}
\end{eqnarray}
The components $V_1$ and $V_3$ are equivalent to, respectively, the components $V_2$ and $V_4$. The components $V_5$ and $V_6$ are not equivalent. 

As in the previous section, we consider the isomorphism between $\mk$ and $\mmu(4)$. Under this map, $\mk_\Theta=\lela \{A_{43},S_{43},S_{33},S_{44}\}\rira$ and 
$$\mm_\Theta=W_R\oplus \bigoplus_{
\begin{smallmatrix}j=1,2\\k=3,4\end{smallmatrix}} W_{kj}  $$
where $W_R=W_R^1\oplus W_{21}$ with
$W_R^1=\lela \{S_{11},S_{22}\}\rira$ and $ W_{kj}=\lela\{A_{kj},S_{kj}\}\rira$.

\begin{proposition} The flag manifold $\F_\Theta$ of $C_4$ with $\Theta=\{\l_3-\l_4,2\l_4\}$ admits $K$-invariant almost complex structures and each of them is induced by a map $J:\mm_\Theta\lra \mm_\Theta$ verifying
$$\begin{array}{rclrcll}
JS_{11}&=&\nu_1 S_{22}, &JS_{22}&=&-\nu_1^{-1}S_{11}&\mbox{ with }\nu_1\neq 0,\\
JA_{21}&=&\nu_2 S_{21}, & JS_{21}&=&-\nu_2^{-1} A_{21} &\mbox{ with } \nu_2\neq 0,\\
JA_{kj}&=&\varepsilon_{j} S_{kj}, &JS_{kj}&=&-\varepsilon_{j}A_{kj} & \mbox{ for } k\in\{3,4\},\,j\in \{1,2\},\\
&&&&&& \mbox{ with }\varepsilon_{j}=\pm 1.
\end{array}$$
Such structure is integrable if and only if $\nu_2=\pm 1$ and $\nu_2=\varepsilon_1$ if $\varepsilon_2=-\varepsilon_1$.
\end{proposition}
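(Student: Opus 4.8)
The plan is to read the shape of $J$ off the $K_\Theta$-isotypic data and then to test the Nijenhuis tensor, following closely the development already carried out for $C_l$ with $l\neq 4$. Let $J$ be a $K_\Theta$-invariant complex structure on $\mm_\Theta$. Since $V_5$ and $V_6$ are irreducible and their $K_\Theta$-representations are inequivalent both to each other and to every other component, item \ref{Wirred} of Lemma \ref{lem1} together with Lemma \ref{lem2} forces $JV_5=V_5$ and $JV_6=V_6$. Being also $M$-invariant, $J$ preserves every subspace $V_{[\alpha]}$ by Proposition \ref{propminvar}; for the present $\Theta$ these are $W_R^1$ (class $\{2\lambda_1,2\lambda_2\}$), $W_{21}$ (class $\{\lambda_1\pm\lambda_2\}$), $W_{31}\oplus W_{42}$ (class $\{\lambda_1\pm\lambda_3,\lambda_2\pm\lambda_4\}$) and $W_{41}\oplus W_{32}$ (class $\{\lambda_1\pm\lambda_4,\lambda_2\pm\lambda_3\}$). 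Intersecting $JV_5=V_5$ with $J(W_{31}\oplus W_{42})=W_{31}\oplus W_{42}$ gives $JW_{31}=W_{31}$, and symmetrically $JW_{41}=W_{41}$, $JW_{32}=W_{32}$, $JW_{42}=W_{42}$, together with $JW_R^1=W_R^1$ and $JW_{21}=W_{21}$. Hence $J$ is block diagonal for the decomposition $\mm_\Theta=W_R^1\oplus W_{21}\oplus\bigoplus_{j,k}W_{kj}$, exactly as in the case $l\neq 4$.

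It remains to identify $J$ on each block. On $W_R=W_R^1\oplus W_{21}$ the subalgebra $\mk_\Theta$ acts trivially, since $\pm\alpha\pm\beta$ is never a root when $\alpha\in R$ and $\beta\in\Theta$; so $K_\Theta$-invariance imposes no constraint there, and the restrictions of $J$ to $W_R^1$ and $W_{21}$ account for the parameters $\nu_1$, $\nu_2$ of the statement. On each $W_{kj}$ with $j\in\{1,2\}$, $k\in\{3,4\}$ one argues as in Lemma \ref{aacd}: the diagonal element $S_{kk}\in\mk_\Theta$ acts by $\ad(S_{kk})A_{kj}=2S_{kj}$ and $\ad(S_{kk})S_{kj}=-2A_{kj}$, so commuting with it forces $JA_{kj}=\varepsilon_{kj}S_{kj}$, $JS_{kj}=-\varepsilon_{kj}A_{kj}$ with $\varepsilon_{kj}=\pm1$; moreover $A_{43}\in\mk_\Theta$ satisfies $\ad(A_{43})A_{3j}=A_{4j}$ and $\ad(A_{43})S_{3j}=S_{4j}$, whence $\varepsilon_{3j}=\varepsilon_{4j}$, which we denote by $\varepsilon_j$. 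Conversely every $J$ of the displayed form commutes with $S_{33},S_{44},A_{43},S_{43}$, hence with all of $\mk_\Theta$, and is therefore $K_\Theta$-invariant; this settles the first assertion.

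For integrability I would compute the Nijenhuis tensor by specialising to $d=2$ the computations of the subsection on $C_l$ with $l\neq 4$. The pair $N_J(S_{11},A_{21})$ is exactly the one treated in Lemma \ref{lema2}, so its vanishing forces $\nu_2=\pm1$; and for $k\in\{3,4\}$ the pair $N_J(A_{k1},A_{k2})$ is the one of Lemma \ref{propes} for the triple $k>2>1$, whose vanishing excludes the sign configuration $\nu_2=\varepsilon_2=-\varepsilon_1$, that is, forces $\nu_2=\varepsilon_1$ whenever $\varepsilon_2=-\varepsilon_1$. Conversely, granting these two conditions, the remaining Nijenhuis brackets all vanish --- those internal to $W_R$ (only mutually commuting, essentially diagonal elements occur), the pairs $N_J(S_{kk},S_{kj})$, and the pairs $N_J(S_{k1},S_{k2})$ and $N_J(S_{k1},A_{k2})$ --- by the same computations as in the concluding proposition of that subsection; this yields the stated criterion.

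The one point that is not a verbatim specialisation of the general $C_l$ argument is the existence step: the enlarged $M$-equivalence classes peculiar to $C_4$ force $J$ only to preserve the four-dimensional spaces $W_{31}\oplus W_{42}$ and $W_{41}\oplus W_{32}$, so one must use that $V_5$ and $V_6$ are inequivalent to recover block diagonality; after that the $\mk_\Theta\cong\mmu(2)$ computations and the Nijenhuis bookkeeping run as before. I expect the Nijenhuis computation to be the most laborious step, though it involves only brackets of root vectors, most of which vanish for weight reasons, leaving the two displayed conditions as the only constraints.
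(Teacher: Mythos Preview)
Your proposal is correct and follows essentially the same approach as the paper: you recover block-diagonality of $J$ by intersecting the $M$-invariant subspaces $V_{[\alpha]}$ with the $K_\Theta$-irreducibles $V_5$ and $V_6$ (using their inequivalence), then determine $J$ on each $W_{kj}$ via commutation with diagonal and off-diagonal elements of $\mk_\Theta\simeq\mmu(2)$, and finally reduce integrability to the Nijenhuis pairs $N_J(S_{11},A_{21})$ and $N_J(A_{k1},A_{k2})$ exactly as in Lemmas \ref{lema2} and \ref{propes}. Your explicit remark that the enlarged $M$-classes peculiar to $C_4$ are the only obstacle to a verbatim specialisation of the $l\neq 4$ argument is precisely the point the paper makes.
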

\begin{proof}
We already know that $\F_\Theta$ admits $M$-invariant almost complex structures and such $J$ is the direct sum of almost complex structures in each $V_{[\alpha]}$,\linebreak $\alpha\in \Pi^+\bs \lela \Theta\rira^+$.  In this case, the $M$-equivalence classes  are
$$\{\lambda_1-\lambda_2,\lambda_1+\lambda_2\},\; \{\lambda_1\pm \lambda_3,\lambda_2\pm\lambda_4\},\;\{\lambda_1\pm\lambda_4,\lambda_2\pm\lambda_3\},\;\{2\lambda_1, 2\lambda_2\}.$$
So, in particular, $W_R^1$, $W_{21}$, $W_{31}\oplus W_{42}$ and $W_{32}\oplus W_{41}$ are $J$-invariant. 

Moreover, since $V_5=W_{31}\oplus W_{41}$ and $V_6=W_{32}\oplus W_{42}$ in \eqref{descKinvC4} are irreducible and non-equivalent, we have $JV_5=V_5$ and $JV_6=V_6$. Therefore each $W_{kj}$, $k=3,4$, $j=1,2$ is invariant, since it can be described as an intersection of $V_{[\alpha]}$ and $V_t$ for suitable root and index.

We proceed as in the general case $C_l$, $l \neq 4$ to show that $J$ has the form given in the statement of the proposition.

For any $Y\in \mk_\Theta$ and $Z\in W_R$, we have $[Y,Z]=0$ so $J$ restricted to this subspace is also $\mk_\Theta$-invariant. Let $Y=a_3 S_{33} +a_4 S_{44}\in \mk_\Theta$, then $\ad_Y J=J\ad_Y$ implies that for $k=3,4$, $j=1,2$ the matrix of $J|_{W_{kj}}$ in the basis $\{A_{kj},S_{kj}\}$ is 
$$\mu_{kj}\left(
\begin{array}{cc}
0&-1\\
1&0
\end{array}\right),\, \mu_{kj}=\pm 1. $$
Now let $Y=a_3 A_{43} +a_4 S_{43}\in \mk_\Theta$ and let $Z\in W_{kj}$ with $k=3,4$, then $\ad_Y JZ =J\ad_Y Z$ holds if and only if $\varepsilon_{4j}=\varepsilon_{3j}$ for $j=1,2$. It is not hard to see that these conditions are also sufficient for $J$ to be $K_\Theta$-invariant.

To address integrability, notice that, as in the general case, we have
\begin{eqnarray*}
N_J(S_{11},A_{21}) &=& -2\left(\nu_1( \nu_2-\nu_2^{-1})A_{21}+(-1+\nu_2^{2})S_{21}\right)\\
N_J(A_{41},A_{42}) &=&\left(\varepsilon_1\varepsilon_2-1+(\varepsilon_1-\varepsilon_2)\nu_2^{-1}\right)A_{21}
\end{eqnarray*}
Therefore $J$ is integrable if $\nu_2=\pm 1 $ and $\nu_2=\epsilon_1$ in the case that $\epsilon_1=\epsilon_2$. One can check that these conditions are sufficient for $J$ to be integrable.
\end{proof}

\subsection{Flags of $D_l=\mathfrak{so}(l,l)$}

A root system is given by  $\pm \lambda_i\pm\lambda_j$, $i\neq j$, and the corresponding set of simple roots is given by $\Sigma=\{\lambda_1-\lambda_2,\ldots, \lambda_{l-1}-\lambda_l,\lambda_{l-1}+\lambda_l\}$, $1\leq i<j\leq l$. The maximal compact subalgebra of $\mathfrak{so}(l,l)$ is $\mk\simeq\mathfrak{so}\left( l\right)\oplus \mathfrak{so}\left( l \right)$.

As in the $C_l$ case, we deal first with the case $D_l$ with $l\geq 5$ and later we address the case of $l=4$ because of the difference between the $M$-equivalence classes.

\subsubsection{Case $D_l$, $l\geq 5$}

We consider $\Theta =\{\lambda _{d}-\lambda _{d+1},\ldots ,\lambda
_{l-1}-\lambda_{l},\lambda _{l-1}+\lambda _{l}\}$, this gives a sub diagram $D_p$ of $D_l$ with $p=l-d+1$, thus $\mk_\Theta\simeq \mathfrak{so}\left( p\right) _{1}\oplus 
\mathfrak{so}\left( p\right) _{2}$. The set $\langle \Theta \rangle $ of roots generated by $\Theta $ is given by \begin{equation*}
\langle \Theta \rangle =\{\pm \left( \lambda _{i}\pm \lambda _{j}\right)
:d\leq i<j\leq l\}.
\end{equation*}

The roots in $\Pi^+\bs \lela \Theta\rira^+$ are 
$$
\lambda_i\pm\lambda_j\mbox{ with } 1\leq i<j\leq d, \quad \mbox{ and } \lambda_i\pm\lambda_j\mbox{ with }i=1,\ldots,d-1,\,j=d, \ldots l.
$$
and the $M$-equivalence classes are
$\{\l_i-\l_j,\l_i+\l_j\}$. Consider the subsets of roots in $\Pi^+\bs \lela \Theta\rira^+$:
\begin{eqnarray*}
R&=&\{\l_i\pm\l_j: 1\leq i<j\leq d\}\\
\Pi_i&=&\{\lambda_i\pm\l_j:d\leq j \leq l \}, \quad i=1, \ldots,d-1
\end{eqnarray*}
and let  $W_R=\sum_{\alpha\in R}\mgg_\alpha$ and $W_i=\sum_{\alpha\in \Pi_i} \mgg_{\alpha}$. Clearly we obtain
\begin{equation}\label{descntheta}
\mn_\Theta^+=W_R\oplus \sum_{i=1}^{d-1}W_i.
\end{equation}

The subspace $W_R$ is $K_\Theta$ invariant and irreducible. Moreover, each $W_i$ decomposes as $W_i=V^1_i\oplus V_i^2$, where $V_i^j$ is irreducible $K_\Theta$-invariant and the representations are not equivalent \cite{patrao2015isotropy}.  We present an explicit description of these subspaces.
\medskip

A split real form of $D_{l}$ is $\mathfrak{so}\left( l,l\right)$ and it is
represented by real matrices of the form 
\begin{equation}
\left( 
\begin{array}{cc}
A & B \\ 
C & -A^{T}%
\end{array}%
\right), \ \mbox{where}\mbox \ \ B+B^{T}=C+C^{T}=0.  \label{formatrizes}
\end{equation}
The algebra $\mathfrak{g}\left( \Theta \right) $ generated by $\mathfrak{g}%
_{\alpha }$, $\alpha \in \langle \Theta \rangle$ is given by matrices in Eq. \eqref{formatrizes} such that $A,B$ and $C$ have the form%
\begin{equation*}
\left( 
\begin{array}{cc}
0 & 0 \\ 
0 & \ast%
\end{array}%
\right) ,
\end{equation*} where the non-zero part is squared of size $p=l-d+1$. 
The Lie algebra $\mathfrak{g}\left( \Theta \right) $ is of type $%
D_{p}$, isomorphic to $\mathfrak{so}\left(p,p\right)$.

The compact part $\mathfrak{k}$ inside $\so(l,l)$ is given by the subset matrices in \eqref{formatrizes} having the form
\begin{equation*}
\left( 
\begin{array}{cc}
A & B \\ 
B & A%
\end{array}%
\right), \ \mbox{where}\mbox \ \ A+A^{T}=B+B^{T}=0.
\end{equation*}
It is well known that $\mathfrak{k}$ decomposes as a sum of two ideals, both isomorphic to $\mathfrak{so}\left( l\right)$.  The compact Lie algebra $\mk_\Theta$ lies inside $\mk$ and also inside $\mgg(\Theta)$ and consists of matrices of the form
\begin{equation}\label{kthetamatrices}
\left( 
\begin{array}{cc}
A & B \\ 
B & A%
\end{array}%
\right), \ \mbox{with }A, B \in \lela \{E_{st}-E_{ts}: d\leq s<t\leq l\}\rira.
\end{equation}
The Lie algebra $\mathfrak{k}_\Theta$ also decomposes as a sum of two ideals, both isomorphic to $\mathfrak{so}\left( p\right)$, which are 
\begin{eqnarray*}
\mathfrak{so}\left(p \right) _{1}&=&\left\{\left( 
\begin{array}{cc}
A & A \\ 
A & A%
\end{array}%
\right):A \in \lela \{E_{st}-E_{ts}: d\leq s<t\leq l\}\rira\right\},\\
\mathfrak{so}\left( p\right) _{2}&=&\left\{\left( 
\begin{array}{cc}
A & -A \\ 
-A & A%
\end{array}%
\right):A \in \lela \{E_{st}-E_{ts}: d\leq s<t\leq l\}\rira\right\}. 
\end{eqnarray*}

Fix $i\in \{1,\ldots, d-1\}$ and denote $S_{i}=\{X=(a_{st})\in \gl(l,\R):\, a_{st}=0 \mbox{ for all } (st)\notin \{(ij): \,j=d,\ldots,l\}\}$. For any $j=d, \ldots, l$ the root space $\mgg_{\l_i-\l_j}$ is represented by matrices \eqref{formatrizes} where $A=E_{ij}$, $C=B=0$; meanwhile, $\mgg_{\l_i+\l_j}$ is represented by the matrices of the above form where $B=E_{ij}-E_{ji}$, $A=C=0$.
Thus $W_i$ is given by
\begin{equation}\label{Wimatrices}
\left( 
\begin{array}{cc}
X & Y-Y^t \\ 
0 & -X^{T}%
\end{array}%
\right),\quad 
\mbox{ where }X, Y \in S_i.
\end{equation}
For $Z\in S_i$ denote 
\begin{equation}\label{Vimatrices}
X_Z=\left( 
\begin{array}{cc}
Z & Z-Z^{T} \\ 
0 & -Z^{T}%
\end{array}%
\right),\quad
Y_Z=\left( 
\begin{array}{cc}
Z& -Z+Z^{T} \\ 
0 & -Z^{T}%
\end{array}%
\right)\end{equation} and define 
$V_i^1=\{X_Z:\, Z\in S_i\}$, $V_i^2=\{Y_Z:\,Z\in S_i\}$. Clearly, $V_i^1,V_i^2\subset W_i$. Moreover, a matrix as in \eqref{Wimatrices} can be written as the sum of two matrices in \eqref{Vimatrices} by taking $Z=(X+Y)/2$, $Z'=(X-Y)/2$. Thus we obtain $W_i=V_i^1\oplus V_i ^2$ and $\dim V_i^1=\dim V_i^2=l-d+1=p=|\Theta|$.

We compute the $\mk_\Theta$ action on $V_i^1$ and $V_i^2$: let $N\in \mk_\Theta$ as in \eqref{kthetamatrices} and let $X_Z\in V_i^1$, $Y_Z\in V_i^2$ then $AZ=0=BZ$, $Z^TA=0=Z^TB$ so
$$[N,X_Z] = X_{-Z(A+B)}, \; \mbox{ and }\;
[N,Y_Z] =Y_{-Z(A-B)}.
$$
This implies that $\so(p)_2$ acts trivially on $V_i^1$ while for $N\in \so(p)_1$ the action is $[N,X_Z]=X_{-2ZA}$.
Similarly, $\so(p)_1$ acts trivially on $V_i^2$ while for $N\in \so(p)_2$ the action is $[N,X_Z]=X_{-2ZA}$. We conclude that the $\mk_\Theta$ representation on $V_i^1$ is equivalent to the $\so(p)\oplus \so(p)$ representation on $\R^p$ acting as usual with the first component and by zero with the second one. Similarly, the $\mk_\Theta$ representation on $V_i^2$ is equivalent to the $\so(p)\oplus \so(p)$ representation on $\R^p$ acting by zero with the first component and as usual with the second component.

We keep $i=1, \ldots, d-1$ fixed. Let $s,t\in \{d\ldots l\}$, $s\neq t$ and consider $N_{st}^1$ being as in \eqref{kthetamatrices} with $A=E_{st}-E_{ts}$ and $B=A$ (i.e. $N\in \so(p)_1$). Then 
\begin{equation}\label{MXbracket}
\left[N_{st}^1,X_{E_{is}}\right]=-2X_{E_{it}}\mbox{  and }[N_{st}^1,X_{E_{it}}]=2X_{E_{is}},\mbox{ while } [N_{st}^1,Y_{E_{ij}}]=0 \mbox{ for all }j.
\end{equation}
 Similarly, denote $N_{st}^2\in\so(p)_2$ being the matrix in $\mk_\Theta$ associated to $A=E_{st}-E_{ts}$ and $B=-A$, then
 \begin{equation}
\left[N_{st}^2,Y_{E_{is}}\right]=-2Y_{E_{it}}\mbox{  and }[N_{st}^2,Y_{E_{it}}]=2Y_{E_{is}},\mbox{ while } [N_{st}^2,X_{E_{ij}}]=0 \mbox{ for all }j.
\end{equation}
Having described the $\mk_\Theta$ representation on $\mn_\Theta^+$ we can state:
 
\begin{proposition}
The real flags $\F_\Theta$ of $D_l$ with $l \geq 5$ and $\Theta\neq \emptyset$ do not admit $K_\Theta$-invariant complex structures.
\end{proposition}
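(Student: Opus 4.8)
The plan is to derive a contradiction from the assumption that a $K_\Theta$-invariant complex structure $J:\mn_\Theta^+\lra \mn_\Theta^+$ exists, by exploiting the decomposition \eqref{descntheta} together with the explicit description of the $\mk_\Theta$-action on the summands $V_i^1,V_i^2$. First I would invoke Lemma \ref{lem2} and \ref{Wirred} of Lemma \ref{lem1}: the subspace $W_R$ is $K_\Theta$-invariant, irreducible, and (as recorded before the proposition, citing \cite{patrao2015isotropy}) inequivalent to every other irreducible constituent, so $JW_R=W_R$; in particular $W_R$ must be even dimensional, which already rules out certain $d$, but the main argument does not rely on that. The real content is in the spaces $W_i=V_i^1\oplus V_i^2$: the constituents $V_i^1,V_i^2$ are irreducible and mutually inequivalent, and moreover $V_i^1$ is equivalent to $V_{i'}^1$ (and $V_i^2$ to $V_{i'}^2$) while $V_i^1$ is never equivalent to $V_{i'}^2$. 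Hence by Lemma \ref{lem2}, $J$ cannot send any nonzero vector of $V_i^1$ into a $V_{i'}^2$, nor into $W_R$; so $J$ must map $\bigoplus_i V_i^1$ into itself and likewise $\bigoplus_i V_i^2$ into itself.

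Next I would bring in the $M$-invariance, which by Proposition \ref{propminvar} forces $J$ to preserve each $V_{[\l_i-\l_j]}=\mgg_{\l_i-\l_j}\oplus\mgg_{\l_i+\l_j}$. Now comes the crux: a vector of $V_i^1$ of the form $X_{E_{ij}}$ (see \eqref{Vimatrices}) has components in both $\mgg_{\l_i-\l_j}$ and $\mgg_{\l_i+\l_j}$ — indeed $X_Z$ for $Z=E_{ij}$ has block $A=E_{ij}$ contributing to $\mgg_{\l_i-\l_j}$ and block $Z-Z^T=E_{ij}-E_{ji}$ contributing to $\mgg_{\l_i+\l_j}$ — whereas a vector of $V_i^2$, namely $Y_{E_{ij}}$, has the \emph{opposite} relative sign between these two blocks. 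The subspace $\mgg_{\l_i-\l_j}\oplus\mgg_{\l_i+\l_j}$ is two-dimensional, spanned equally well by $\{X_{E_{ij}},Y_{E_{ij}}\}$. Since $J$ preserves this plane and also preserves $\bigoplus V_i^1$ and $\bigoplus V_i^2$ separately, $J$ must preserve the two lines $\R X_{E_{ij}}$ and $\R Y_{E_{ij}}$ individually. But a line is one-dimensional, and by item 5 of Lemma \ref{lem1} no one-dimensional subspace can be $J$-invariant (the eigenvalues of $J$ are $\pm i$). This is the contradiction, so no $K_\Theta$-invariant complex structure exists.

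I expect the main obstacle to be not the logical skeleton above but making the equivalence/inequivalence claims airtight: specifically, that $V_i^1\not\simeq V_{i'}^2$ as $\mk_\Theta$-modules and that neither is equivalent to $W_R$. This follows from the action computation already carried out in the excerpt — $\so(p)_2$ acts trivially on $V_i^1$ and $\so(p)_1$ acts trivially on $V_i^2$, so no $\mk_\Theta$-equivariant isomorphism $V_i^1\to V_{i'}^2$ can exist (it would have to intertwine a nontrivial $\so(p)_1$-action with a trivial one), and $W_R$ carries yet another isotype since $\mk_\Theta$ acts on it through a genuinely different representation (as stated via the reference to \cite{patrao2015isotropy}). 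Once these module-theoretic facts are in hand, the argument is the short combination of Lemmas \ref{lem1} and \ref{lem2} together with the one explicit observation about the mixed blocks of $X_Z$ versus $Y_Z$; I would present the $X_{E_{ij}}$ versus $Y_{E_{ij}}$ computation in a single displayed line and then close with the dimension-one contradiction.
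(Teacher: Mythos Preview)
Your proposal is correct and uses the same key ingredients as the paper—$M$-invariance, the inequivalence $V_i^1\not\simeq V_i^2$, and the explicit basis $\{X_{E_{ij}},Y_{E_{ij}}\}$—but packages them slightly differently. The paper first observes that each $W_i$ is itself a sum of $M$-equivalence classes, so $JW_i=W_i$ follows directly from $M$-invariance; then inside $W_i$ the inequivalence $V_i^1\not\simeq V_i^2$ alone gives $JV_i^1=V_i^1$ via Lemma~\ref{lem2}, without any need to analyse the isotype of $W_R$ or the cross-equivalences $V_i^1\simeq V_{i'}^1$. Your detour through $\bigoplus_i V_i^1$ works, but it makes the hand-waved claim about the isotype of $W_R$ an unnecessary loose end—just replace it by the $M$-invariance of $W_i$ and the argument becomes self-contained. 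For the final contradiction, the paper writes $JX_{E_{ij}}=a_{ij}X_{E_{ij}}+c_{ij}Y_{E_{ij}}$ with $c_{ij}\neq 0$ and then checks that $J\ad_{N_{sj}^1}X_{E_{ij}}\neq\ad_{N_{sj}^1}JX_{E_{ij}}$; your shortcut—intersecting $V_i^1$ with the $M$-class plane to obtain a $J$-invariant line $\mathbb{R}X_{E_{ij}}$ and invoking item~5 of Lemma~\ref{lem1}—is more direct and in fact renders the paper's $\ad$ computation superfluous.
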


\begin{proof} Assume $J:\mn_\Theta^+\lra \mn_\Theta^+$ is a $K_\Theta$-invariant almost complex structure. As it is  $M$-invariant and each subspace in \eqref{descntheta} is sum of $M$-equivalence classes, we have that  $JW_R=W_R$ and $JW_i=W_i$ for all $i=1, \ldots,d-1$.



Recall that $W_i$ is not irreducible, for $i=1, \ldots, d-1$. Instead $W_i=V_i^1\oplus V_i^2$ where each of these subspace is invariant and irreducible by the $K_\Theta$ action, and the induced representations are not equivalent \cite{patrao2015isotropy}. By Lemma \ref{lem2}, we conclude that $V_i^1$, $V_i^2$ are $J$-invariant. In particular, $V_i^1$ and $V_i^2$ are even dimensional and thus $p$ is even.

Fix $i=1,\ldots,d-1$ and let $j\in\{d,\ldots,l\}$. In the notation \eqref{Vimatrices} one can see that  $\mgg_{\l_i-\l_j}\oplus \mgg_{\l_i+\l_j}=\lela \{X_{E_{ij}},Y_{E_{ij}}\}\rira$, which is a $J$-invariant subspace of $W_i$ because of the $M$-invariance of $J$. Thus $JX_{E_{ij}}=a_{ij}X_{E_{ij}}+c_{ij}Y_{E_{ij}}$ with $c_{ij}\neq 0$. For any $s\in \{d,\ldots,l\}$, $s\neq j$ we apply \eqref{MXbracket} and obtain
\begin{eqnarray*}
\ad_{N_{sj}^1} JX_{E_{ij}}&=&\ad_{N_{sj}^1}(a_{ij}X_{E_{ij}}+c_{ij} Y_{E_{ij}})=-2a_{ij}X_{E_{is}},\quad\mbox{ while } \\
J\ad_{N_{sj}^1} X_{E_{ij}}&=&J(-2X_{E_{is}})=-2(a_{is} X_{E_{is}}+c_{is}Y_{E_{is}}),
\end{eqnarray*}
but $c_{is}\neq 0$, contradicting the $K_\Theta$-invariance of $J$.
\end{proof}

\subsubsection{Case $D_4$}

Now we proceed to the study of flags of $D_4$ with $\Theta$ as in Table \ref{table.Minv}.
The $M$-equivalence classes of positive roots in $D_4$ are: 
\begin{eqnarray*}
&\{\lambda_{1}-\lambda_{2},\lambda_{1}+\lambda_{2},\lambda_{3}-\lambda_{4},%
\lambda_{3}+\lambda_{4}\},\
\{\lambda_{1}-\lambda_{3},\lambda_{1}+\lambda_{3},\lambda_{2}-\lambda_{4},%
\lambda_{2}+\lambda_{4}\}&\\
&
\{\lambda_{1}-\lambda_{4},\lambda_{1}+\lambda_{4},\lambda_{2}-\lambda_{3},%
\lambda_{2}+\lambda_{3}\}.&
\end{eqnarray*}

As in the general case we work with the split form $\so(4,4)$. In what follows we denote by $X_{ij}=E_{i,j}-E_{l+j,l+i}$ a generator of $\mathfrak{g}_{\lambda_i-\lambda_j}$ and by $Y_{ij}=E_{i,l+j}-E_{j,l+i}$ a generator $\mathfrak{g}_{\lambda_i+\lambda_j}$, where $E_{i,j}$ is the $8\times 8$ matrix with $1$ in the position $ij$ and zeroes elsewhere.

The group $M$ consists of $8\times 8$ diagonal matrices $\diag(\epsilon_1,\epsilon_2, \epsilon_3,\epsilon_4,\epsilon_1,\epsilon_2, \epsilon_3,\epsilon_4)$ where $\epsilon_i=\pm1$ and $\epsilon_1\epsilon_2 \epsilon_3\epsilon_4 =1$, that is, there is an even amount of $-1$'s in the diagonal of matrices of $M$.
\medskip

 \begin{proposition} The real flag manifold $\F_\Theta$ of type $D_4$ with $\Theta=\linebreak\{\lambda_1-\lambda_2,\lambda_3-\lambda_4\}$ admits $K_\Theta$ invariant almost complex structures. These structures are not integrable.
  \end{proposition}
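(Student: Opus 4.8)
The plan is to imitate the structure of the $C_4$ propositions above: fix the isotropy decomposition, write down the invariant almost complex structures explicitly, and then show the Nijenhuis tensor never vanishes. \emph{Set-up and existence.} Since $\lambda_1-\lambda_2$ and $\lambda_3-\lambda_4$ are orthogonal simple roots, $\langle\Theta\rangle^+=\{\lambda_1-\lambda_2,\lambda_3-\lambda_4\}$, so $\F_\Theta$ is ten-dimensional and $\mk_\Theta=\so(2)\oplus\so(2)$ is abelian, spanned by the compact generators $A_{12}$ and $A_{34}$ of the two roots of $\Theta$. The $M$-equivalence classes in $\Pi^+\setminus\langle\Theta\rangle^+$ are $\{\lambda_1+\lambda_2,\lambda_3+\lambda_4\}$, $\{\lambda_1\pm\lambda_3,\lambda_2\pm\lambda_4\}$ and $\{\lambda_1\pm\lambda_4,\lambda_2\pm\lambda_3\}$, all of even size, so Proposition \ref{propminvar} already yields $M$-invariant almost complex structures. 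In the matrix model of $\so(4,4)$, with $X_{ij}$ and $Y_{ij}$ generating $\mathfrak g_{\lambda_i-\lambda_j}$ and $\mathfrak g_{\lambda_i+\lambda_j}$, the $K_\Theta$-irreducible pieces of $\mn_\Theta^+$ are the lines $\mathfrak g_{\lambda_1+\lambda_2}$, $\mathfrak g_{\lambda_3+\lambda_4}$ together with the two inequivalent real two-dimensional $\mathrm{SO}(2)\times\mathrm{SO}(2)$-irreducibles $V_1,V_2$ inside $\langle X_{13},X_{14},X_{23},X_{24}\rangle$ and $V_3,V_4$ inside $\langle Y_{13},Y_{14},Y_{23},Y_{24}\rangle$ (compare \cite{patrao2015isotropy}); one has $\mathfrak g_{\lambda_1+\lambda_2}\simeq\mathfrak g_{\lambda_3+\lambda_4}$, $V_1\simeq V_3$, $V_2\simeq V_4$ and no other equivalence. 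Every isotypic component is even-dimensional, and $K_\Theta$ acts on it either by $\pm1$ (on $W_R:=\mathfrak g_{\lambda_1+\lambda_2}\oplus\mathfrak g_{\lambda_3+\lambda_4}$) or by rotations (on the $V_i$), so the direct sum of the rotation complex structures on the $V_i$ with any complex structure on $W_R$ is $K_\Theta$-invariant; this proves existence.

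\emph{Reduction of integrability to a property of $\omega$.} Among the roots of $\Pi^+\setminus\langle\Theta\rangle^+$ the only sums which are again roots equal $\lambda_1+\lambda_2$ (for instance $(\lambda_1-\lambda_3)+(\lambda_2+\lambda_3)$), and $\lambda_1+\lambda_2$ added to any root of $\mn_\Theta^+$ is never a root; hence $[\mn_\Theta^+,\mn_\Theta^+]=\mathfrak g_{\lambda_1+\lambda_2}$ is a one-dimensional central ideal. Fix a generator $E$ of it and write $[U,V]=\omega(U,V)E$ for a skew form $\omega$ on $\mn_\Theta^+$. Expanding the Nijenhuis tensor,
\[
N_J(U,V)=\bigl(\omega(JU,JV)-\omega(U,V)\bigr)E-\bigl(\omega(JU,V)+\omega(U,JV)\bigr)JE .
\]
By Proposition \ref{propminvar} the two-dimensional $W_R$ (a full $M$-class) is $J$-invariant and $J|_{W_R}$ has no real eigenvector, so $\{E,JE\}$ is a basis of $W_R$; therefore $N_J\equiv0$ if and only if $\omega(JU,JV)=\omega(U,V)$ for all $U,V$ (the $JE$-coefficient vanishing being equivalent to this, because $J^2=-1$).

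\emph{The obstruction.} It remains to exhibit, for each $K_\Theta$-invariant $J$, a pair on which $\omega$ is not $J$-invariant, and this is the delicate part. By $M$-invariance $J$ preserves the class-subspaces $V_{[\lambda_1-\lambda_3]}=\langle X_{13},Y_{13},X_{24},Y_{24}\rangle$ and $V_{[\lambda_1-\lambda_4]}=\langle X_{14},Y_{14},X_{23},Y_{23}\rangle$, while by $(K_\Theta)_0$-invariance $J$ preserves the isotypic component $V_1\oplus V_3$; intersecting, $J$ preserves the two-planes $A=\langle X_{13}-X_{24},\,Y_{13}-Y_{24}\rangle$ and $B=\langle X_{14}+X_{23},\,Y_{14}+Y_{23}\rangle$. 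Moreover $\ad(A_{12})$ restricts to an isomorphism $A\to B$, and $\ad(A_{12})J=J\ad(A_{12})$ forces $J|_A$ and $J|_B$ to have the same matrix $\left(\begin{smallmatrix}p&q\\ r&-p\end{smallmatrix}\right)$, with $p^2+qr=-1$, in the bases $(X_{13}-X_{24},Y_{13}-Y_{24})$ and $(X_{14}+X_{23},Y_{14}+Y_{23})$. Taking $U=X_{13}-X_{24}\in A$ and $V=Y_{14}+Y_{23}\in B$, a bracket computation in $\so(4,4)$ gives $[U,V]=-2E$ and, for $JU=p(X_{13}-X_{24})+r(Y_{13}-Y_{24})$ and $JV=q(X_{14}+X_{23})-p(Y_{14}+Y_{23})$, $[JU,JV]=(4p^2+2)E$; hence the $E$-component of $N_J(U,V)$ equals $4(p^2+1)>0$, and since $E\notin\R\,JE$ we get $N_J(U,V)\neq0$. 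Thus none of these structures is integrable. The only real hurdle is the step combining $M$-invariance and $(K_\Theta)_0$-invariance to pin down $A$, $B$ and the rigidity $J|_A=J|_B$ — exactly the interaction between the two factors of $K_\Theta$ noted in the introduction; once it is in place the Nijenhuis computation is routine.
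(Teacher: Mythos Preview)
Your non-integrability computation at the end is correct and in fact sharper than the paper's (which simply asserts that $N_J(Y_{12},X_{13}-X_{24})$ never vanishes): your analysis of the $J$-invariant two-planes $A,B\subset V_1\oplus V_3$ and the rigidity $J|_A=J|_B$ via $\mathrm{ad}(A_{12})$ is exactly right, and the bracket computation giving the $Y_{12}$-component $4(p^2+1)$ is valid. However, two earlier claims are wrong.

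\textbf{Existence.} The ``rotation complex structure'' on each $V_i$ (e.g.\ $J(X_{13}-X_{24})=X_{14}+X_{23}$ on $V_1$) is $(K_\Theta)_0$-invariant but \emph{not} $M$-invariant: the element $m=\mathrm{diag}(1,-1,-1,1,1,-1,-1,1)\in M$ acts by $-1$ on $X_{13}-X_{24}$ and by $+1$ on $X_{14}+X_{23}$, so $mJ\neq Jm$. The paper shows instead that $M$-invariance together with $(K_\Theta)_0$-invariance forces $JV_1=V_3$ and $JV_2=V_4$, never $JV_i=V_i$. Your own analysis of $A$ and $B$ actually recovers this (the general $J|_A$ with $p^2+qr=-1$ sends $X_{13}-X_{24}$ into $\langle X_{13}-X_{24},Y_{13}-Y_{24}\rangle$, and $M$-invariance forces the $X$-component to be zero, i.e.\ $p=0$... actually no, your $A$-analysis is already the correct general form); you could have used it to exhibit an explicit $K_\Theta$-invariant $J$ (take $p=0$, $r=1$, $q=-1$ and check $M$-invariance directly), but the existence paragraph as written does not do this.

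\textbf{The derived subalgebra.} Your claim that $[\mn_\Theta^+,\mn_\Theta^+]=\mathfrak g_{\lambda_1+\lambda_2}$ is false: $\lambda_3+\lambda_4\in\Pi^+\setminus\langle\Theta\rangle^+$ and $(\lambda_1-\lambda_3)+(\lambda_3+\lambda_4)=\lambda_1+\lambda_4$ is a root, so $\mathfrak g_{\lambda_3+\lambda_4}$ brackets nontrivially with the $X$-block. Thus the reduction ``$N_J\equiv 0\Leftrightarrow\omega$ is $J$-invariant'' is not justified. Fortunately this does not damage your obstruction: for the specific $U\in A$, $V\in B$ you choose, all four brackets $[U,V]$, $[JU,V]$, $[U,JV]$, $[JU,JV]$ do happen to land in $\mathfrak g_{\lambda_1+\lambda_2}$ (none involves $Y_{34}$), so the computation $N_J(U,V)=4(p^2+1)Y_{12}\neq 0$ stands on its own and suffices for non-integrability.
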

\begin{proof}
The following is the decomposition of $\mn_\Theta^+$ in $K_\Theta$ invariant and irreducible subspaces 
$$\mn_\Theta^+=\mgg_{\l_1+\l_2}\oplus \mgg_{\l_3+\l_4}\oplus \sum_{i=1}^4 V_i,$$\vspace{-1cm}
where \begin{multicols}{2}
\begin{eqnarray*}
V_1&=&\langle X_{13}-X_{24},X_{14}+X_{23} \rangle \\
V_2&=& \langle X_{13}+X_{24},X_{14}-X_{23}\rangle
\end{eqnarray*}

\begin{eqnarray*}
V_3&=&\langle Y_{13}-Y_{24},Y_{14}+Y_{23} \rangle\\
V_4&=&\langle Y_{13}+Y_{24},Y_{14}-Y_{23}\rangle
\end{eqnarray*}\end{multicols}
The map $T_{13}:V_1\longrightarrow V_3$ defined by $T_{13}(X_{13}-X_{24})=Y_{13}-Y_{24}$ and $T_{13}(X_{14}+X_{23})=Y_{14}+Y_{23}$ commutes with $\ad_{\mk_\Theta}$. 
Moreover, the linear map \linebreak $T_{24}:V_2\longrightarrow
V_4$, verifying $T_{24}(X_{13}+X_{24})=Y_{23}+Y_{24}$ and $T_{24}(X_{14}-X_{23})=Y_{14}-Y_{23}$ commutes with the adjoints of $\mathfrak{k}_{\Theta}$.  Therefore the $(K_\Theta)_0$ representations on  $V_1$ and $V_3$ and the representations on $V_2$ and $V_4$, are equivalent. One can see that these two different representations are not equivalent.

Assume $J:\mn_\Theta^+\lra \mn_\Theta^+$ is a $K_\Theta$-invariant almost complex structure. The $M$-invariance implies that $V_{[\alpha]}$ is $M$-invariant. For instance, $V_{[\l_1-\l_3]}=\linebreak\lela\{ X_{13},Y_{13},X_{24},Y_{24}\}\rira$ is invariant under $J$. Because of the $\mk_\Theta$ representations described above, we have that $JV_1=V_1$ or $JV_1=V_3$. In the first case, we may have $X_{13}-X_{24}$ as an eigenvalue of $J$, which is not possible, so we obtain $JV_1=V_3$ and $J (X_{13}-X_{24})=a_1(Y_{13}-Y_{24})$ for some $c_1\neq 0$. By analogous reasoning we obtain that $J$ is as follows:
\begin{equation*}
\begin{array}{rcl}
 JY_{12} & = & aY_{12}+cY_{34}, \\ 
 JY_{34} & = & (1+a^2)Y_{12}/c-aY_{34}, \\
 J(X_{13}-X_{24}) &  = & c_1(Y_{13}-Y_{24}),\\ 
J(X_{14}+X_{23}) & = &c_2(Y_{14}+Y_{23}), \\ 
 J(X_{13}+X_{24})  & = & c_3(Y_{13}+Y_{24}),\\ 
 J(X_{14}-X_{23}) & = &  c_4(Y_{14}-Y_{23}),  
\end{array} 
\end{equation*}
where $ c_i, c\neq 0$. But $J\ad_X=\ad_XJ$ for $X\in \mk_\Theta$ implies $c_1=c_4$ and $c_2=c_3$. Direct computations show that this is $M$-invariant and $J\ad_X=\ad_X J$ for all $X\in \mk_\Theta$, therefore, a $K_\Theta$-invariant almost complex structure.

Regarding integrability, it suffices to remark that, for instance, $N_J(Y_{12},X_{13}-X_{24})$ is never zero.
\end{proof}
\smallskip

\begin{proposition} The real flag manifold $\F_\Theta$ of type $D_4$ with $\Theta=\linebreak\{\lambda_1-\lambda_2,\lambda_3+\lambda_4\}$ admits $K_\Theta$ invariant almost complex structures. These structures are not integrable.
  \end{proposition}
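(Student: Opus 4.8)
I would prove this in parallel with the preceding proposition. The first step is to record, from \cite{patrao2015isotropy}, the $K_\Theta$-invariant irreducible decomposition of $\mn_\Theta^+$. Since the two roots of $\langle\Theta\rangle^+=\{\lambda_1-\lambda_2,\lambda_3+\lambda_4\}$ are orthogonal, $\mk_\Theta$ is two-dimensional abelian; the root spaces $\mgg_{\lambda_1+\lambda_2}$ and $\mgg_{\lambda_3-\lambda_4}$ carry the trivial action (for no $\beta\in\Theta$ is $\beta\pm(\lambda_1+\lambda_2)$ or $\beta\pm(\lambda_3-\lambda_4)$ a root), so they split off as two one-dimensional, mutually equivalent, summands; and the remaining eight root spaces $\mgg_{\lambda_i\pm\lambda_j}$ with $i\in\{1,2\}$, $j\in\{3,4\}$ split into two $(K_\Theta)_0$-invariant four-dimensional blocks — the spans of the two orbits of $\langle s_{\lambda_1-\lambda_2},s_{\lambda_3+\lambda_4}\rangle$ among these roots, one of which contains $\lambda_1-\lambda_3$ — each of which decomposes further into a sum of two inequivalent irreducible planes. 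Write accordingly $\mn_\Theta^+=\mgg_{\lambda_1+\lambda_2}\oplus\mgg_{\lambda_3-\lambda_4}\oplus V_1\oplus V_2\oplus V_3\oplus V_4$, with $V_1\oplus V_2$ the block containing $\mgg_{\lambda_1-\lambda_3}$, $V_1\cong V_3$, $V_2\cong V_4$ (these equivalences realized by intertwiners $T_{13},T_{24}$ commuting with $\ad_{\mk_\Theta}$), $\mgg_{\lambda_1+\lambda_2}\cong\mgg_{\lambda_3-\lambda_4}$, and no further equivalences. Note that, unlike for $\Theta=\{\lambda_1-\lambda_2,\lambda_3-\lambda_4\}$, the reflection $s_{\lambda_3+\lambda_4}$ interchanges root spaces of $\lambda_i-\lambda_j$- and $\lambda_i+\lambda_j$-type, so each $V_k$ is a ``mixed'' plane meeting in a line every $V_{[\alpha]}$ it touches.

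For existence, put any complex structure on the two-dimensional $\mgg_{\lambda_1+\lambda_2}\oplus\mgg_{\lambda_3-\lambda_4}$ — automatically $M$- and $(K_\Theta)_0$-invariant, both groups acting trivially there — and set $J|_{V_1}=T_{13}$, $J|_{V_3}=-T_{13}^{-1}$, $J|_{V_2}=T_{24}$, $J|_{V_4}=-T_{24}^{-1}$. Then $J^2=-1$, $J$ commutes with $\ad_{\mk_\Theta}$ since the $T$'s do, and $J$ preserves each $V_{[\alpha]}$, hence is $M$-invariant; so $J$ is $K_\Theta$-invariant and induces a $K$-invariant almost complex structure on $\F_\Theta$ by the correspondence recalled before Proposition~\ref{propminvar}. (The general $K_\Theta$-invariant structure arises as in the previous proposition: $M$-invariance forces $JV_{[\alpha]}=V_{[\alpha]}$, and since $V_1$ is irreducible and inequivalent to $V_2,\mgg_{\lambda_1+\lambda_2},\mgg_{\lambda_3-\lambda_4}$ while $JV_1=V_1$ is excluded — $V_1\cap V_{[\lambda_1-\lambda_3]}$ would then be a one-dimensional $J$-invariant line, which $J$ has no room for — one gets $JV_1=V_3$ and $JV_2=V_4$, and Schur's lemma fixes the rest.)

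For non-integrability I would show the Nijenhuis tensor cannot vanish on the central part. The space $\mgg_{\lambda_1+\lambda_2}$ is central in $\mn_\Theta^+$ (adding to $\lambda_1+\lambda_2$ any root of $\mn_\Theta^+$ never yields a root). Let $Y$ span $\mgg_{\lambda_1+\lambda_2}$; since $J$ preserves $\mgg_{\lambda_1+\lambda_2}\oplus\mgg_{\lambda_3-\lambda_4}$ and $Y$ is not a $J$-eigenvector, $JY=aY+cZ$ with $Z$ spanning $\mgg_{\lambda_3-\lambda_4}$ and $c\neq0$. Using $[Y,\cdot]=0$ two terms of $N_J(Y,W)$ vanish and the rest collapse to $N_J(Y,W)=c\bigl([Z,JW]-J[Z,W]\bigr)=c\,(\ad Z\circ J-J\circ\ad Z)(W)$ for every $W\in\mn_\Theta^+$; so $J$ integrable would force $\ad Z$ to commute with $J$. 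But $\ad Z$ maps $\mn_\Theta^+$ onto $V_3\oplus V_4$ (its image is the span of $\mgg_{\lambda_1-\lambda_4},\mgg_{\lambda_1+\lambda_3},\mgg_{\lambda_2-\lambda_4},\mgg_{\lambda_2+\lambda_3}$, that is, the orbit block not containing $\lambda_1-\lambda_3$), so $[\ad Z,J]=0$ would give $J(V_3\oplus V_4)=V_3\oplus V_4$; since also $JV_3\subseteq V_1\oplus V_3$ (the $V_3$-isotypic component, as $J$ commutes with $\mk_\Theta$), this forces $JV_3=V_3$, and then the line $V_3\cap V_{[\lambda_1+\lambda_3]}$ would be $J$-invariant — impossible, as the eigenvalues of $J$ are $\pm i$. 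Hence $N_J\neq0$ for every $K_\Theta$-invariant $J$, so none is integrable.

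The only genuinely laborious ingredient is the first step: extracting the explicit Weyl-basis description of $V_1,\dots,V_4$ and of the intertwiners $T_{13},T_{24}$ from \cite{patrao2015isotropy}, and checking that each $V_k$ is indeed a mixed plane (which is what makes the intersections $V_k\cap V_{[\alpha]}$ one-dimensional and powers the eigenvector contradictions). Everything after that is a short list of root-bracket checks, entirely in the spirit of the computations already carried out for the neighbouring cases.
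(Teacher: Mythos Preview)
Your argument is correct. The existence half matches the paper's approach almost verbatim: the same decomposition $\mn_\Theta^+=\mgg_{\lambda_1+\lambda_2}\oplus\mgg_{\lambda_3-\lambda_4}\oplus V_1\oplus V_2\oplus V_3\oplus V_4$, the same intertwiners $T_{13},T_{24}$, and the same Schur-plus-$M$-invariance argument forcing $JV_1=V_3$, $JV_2=V_4$.

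For non-integrability you take a different, more structural route. The paper simply asserts that $N_J(Y_{12},X_{13}-Y_{24})$ never vanishes, leaving the verification to the reader; this is a one-line bracket computation once the explicit form of $J$ is in hand. You instead exploit the centrality of $\mgg_{\lambda_1+\lambda_2}$ to reduce $N_J(Y,\cdot)=0$ to the commutator condition $[\ad Z,J]=0$, then observe that $\im(\ad Z)=V_3\oplus V_4$ would be $J$-stable, which collides with the isotypic constraint $JV_3\subseteq V_1\oplus V_3$ via the one-dimensional intersection $V_3\cap V_{[\lambda_1+\lambda_3]}$. This avoids any explicit Nijenhuis computation and makes transparent \emph{why} the obstruction appears: $J$ interchanges the two four-dimensional blocks while $\ad Z$ maps one into the other, so they cannot commute. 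The paper's route is shorter once the formulas are written down; yours explains the mechanism and would transfer verbatim to the neighbouring case $\Theta=\{\lambda_1-\lambda_2,\lambda_3-\lambda_4\}$. One minor shortcut you could take: since you already established $JV_3=V_1$ in the parenthetical classification, the contradiction with $J(V_3\oplus V_4)=V_3\oplus V_4$ is immediate and the final eigenvector step is unnecessary.
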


\begin{proof} We proceed as in the previous proof. The following is a decomposition on $K_\Theta$ invariant and irreducible subspaces 
$$\mn_\Theta^+=\mgg_{\l_1+\l_2}\oplus \mgg_{\l_3-\l_4}\oplus \sum_{i=1}^4 V_i,$$\vspace{-1cm}
where \begin{multicols}{2}
\begin{eqnarray*}
V_1&=&\langle X_{13}-Y_{24},Y_{14}+X_{23} \rangle \\
V_2&=& \langle X_{13}+Y_{24},Y_{14}-X_{23}\rangle
\end{eqnarray*}

\begin{eqnarray*}
V_3&=&\langle Y_{13}-X_{24},X_{14}+Y_{23} \rangle\\
V_4&=&\langle Y_{13}+X_{24},X_{14}-Y_{23}\rangle
\end{eqnarray*}\end{multicols}

The subspace $V_1$ is $\mathfrak{k}_{\Theta}$-equivalent to subspace $V_3$
and the subspace $V_2$ is $\mathfrak{k}_{\Theta}$-equivalent to subspace $%
V_4 $ through the following linear transformations \linebreak$T_{13}:V_1\longrightarrow V_3$ and $%
T_{24}:V_2\longrightarrow V_4$, given by $T_{13}(X_{13}-Y_{24})=Y_{13}-X_{24}$, $%
T_{13}(Y_{14}+X_{23})=X_{14}+Y_{23}$, $T_{24}(X_{13}+Y_{24})=Y_{13}+X_{24}$ and $%
T_{24}(Y_{14}-X_{23})=X_{14}-Y_{23}$. The other representations are not $\mk_\Theta$ equivalent.

Assume $J$ is a $K_\Theta$-invariant almost complex structure. As before, $JV_1=V_3$ and $JV_2=V_4$ and $J$ verifies 
\begin{equation*}
\begin{array}{rcl}
 JY_{12} & = & aX_{34}+cY_{12},\\ 
  JX_{34} & = & (1+a^2)X_{34}/c-aY_{12}, \\
 J(X_{13}-Y_{24}) &=&c_1(Y_{13}-X_{24}), \\ 
 J(Y_{14}+X_{23}) & =  &  c_2(X_{14}+Y_{23}),\\ 
 J(X_{13}+Y_{24}) & = & c_3(Y_{13}+X_{24}),\\ 
 J(Y_{14}-X_{23}) & =&  c_4(X_{14}-Y_{23}),
\end{array}%
\end{equation*}
$J$ commuting with $\ad_X$, for $X\in \mk_\Theta$ implies $c_1=c_2$ and $c_3=c_4$, and any such $J$ commutes with all $\ad_X\in \mk_\Theta$, so it is $(K_\Theta)_0$ invariant. One can verify that $J$ is also $M$-invariant.

Again, it is possible to see that $N_J(Y_{12},X_{13}-Y_{24})$ never vanishes.
\end{proof}
\smallskip

\begin{proposition}
The real flag manifold $\F_\Theta$ of type $D_4$ with $\Theta=\linebreak\{\lambda_3-\lambda_4,\lambda_3+\lambda_4\}$ admits $K_\Theta$ invariant almost complex structures. These structures are not integrable.
\end{proposition}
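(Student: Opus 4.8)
The plan is to mirror the two preceding $D_4$ propositions. First I would record, following \cite[Section 5.3]{patrao2015isotropy}, the decomposition of $\mn_\Theta^+$ into $K_\Theta$-invariant irreducible subspaces. Since $\Theta=\{\l_3-\l_4,\l_3+\l_4\}$ generates the $D_2$-subsystem $\pm\{\l_3-\l_4,\l_3+\l_4\}$, the compact algebra $\mk_\Theta$ is two-dimensional abelian (isomorphic to $\so(2)\oplus\so(2)$) and acts trivially on $W_R:=\mgg_{\l_1-\l_2}\oplus\mgg_{\l_1+\l_2}=\lela X_{12},Y_{12}\rira$; the remaining eight dimensions split as $W_1\oplus W_2$, where $W_i=\mgg_{\l_i-\l_3}\oplus\mgg_{\l_i+\l_3}\oplus\mgg_{\l_i-\l_4}\oplus\mgg_{\l_i+\l_4}$ is $\mk_\Theta$-invariant, and each $W_i$ decomposes further as $W_i=V_i^1\oplus V_i^2$ with, say, $V_i^1=\lela X_{i3}+Y_{i3},\,X_{i4}+Y_{i4}\rira$ and $V_i^2=\lela X_{i3}-Y_{i3},\,X_{i4}-Y_{i4}\rira$, each two-dimensional and $K_\Theta$-irreducible (one $\so(2)$ factor acting on $V_i^1$ by rotation and the other on $V_i^2$). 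From this one reads off that $V_1^1\sim V_2^1$ and $V_1^2\sim V_2^2$, while no other pair among the four, and none of them with $W_R$, carries equivalent $K_\Theta$-representations.

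Next I would establish existence exactly as before. Let $J:\mn_\Theta^+\lra\mn_\Theta^+$ be $K_\Theta$-invariant; then it is $M$-invariant, so by Proposition \ref{propminvar} it preserves every $V_{[\alpha]}$, in particular $W_R$, $V_{[\l_1-\l_3]}=\lela X_{13},Y_{13},X_{24},Y_{24}\rira$ and $V_{[\l_1-\l_4]}=\lela X_{14},Y_{14},X_{23},Y_{23}\rira$. If $JV_1^1=V_1^1$ then, with $v=X_{13}+Y_{13}$, we would get $Jv\in V_1^1\cap V_{[\l_1-\l_3]}=\lela v\rira$, so $v$ would be a real eigenvector of $J$, which is impossible; the same applies to $V_1^2$, $V_2^1$, $V_2^2$. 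Lemma \ref{lem2} confines $J$ to the isotypic blocks, so $JV_1^1=V_2^1$, $JV_1^2=V_2^2$, and $J$ restricts to a complex structure on $W_R$. Imposing $\ad_X J=J\ad_X$ for $X\in\mk_\Theta$ then pins $J$ down to a non-empty finite-dimensional family: $J|_{W_R}$ stays free (there $\mk_\Theta$ acts trivially and, since $\l_1-\l_2\sim_M\l_1+\l_2$, $M$ acts by a scalar), while the equivariant isomorphisms $V_1^j\to V_2^j$ become constrained just as the coefficients $c_i$ in the two previous propositions. Any member of this family is an invariant almost complex structure, which proves the existence claim.

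Finally, for non-integrability I would compute $N_J(Y_{12},v)$ for an arbitrary nonzero $v\in V_1^1$. In $\mn_\Theta^+$ one has $[W_R,W_1]=0$ and $[\mgg_{\l_1+\l_2},W_2]=0$ (the relevant root sums are not roots of $D_4$), whereas $\ad(X_{12})$ maps $W_2$ injectively into $W_1$, because $(\l_1-\l_2)+(\l_2\pm\l_j)=\l_1\pm\l_j$ is a root for $j=3,4$. Since $J$ preserves the plane $W_R$ and has no real eigenvector there, $JY_{12}=\alpha X_{12}+\beta Y_{12}$ with $\alpha\neq0$. Using $v\in W_1$ and $Jv\in V_2^1\subset W_2$ with $Jv\neq0$, three of the four terms of $N_J(Y_{12},v)$ vanish by the relations above, and one is left with $N_J(Y_{12},v)=[JY_{12},Jv]=\alpha\,[X_{12},Jv]$, a nonzero element of $W_1$. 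Hence $N_J$ never vanishes and no invariant almost complex structure on $\F_\Theta$ is integrable.

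The step I expect to be the main obstacle is purely organizational: correctly extracting the irreducible decomposition and the equivalence pattern of the $V_i^j$ from \cite{patrao2015isotropy}, and keeping track of which components of $J$ survive $\ad$-equivariance over $\mk_\Theta$. Once that is settled, the existence argument is essentially a copy of the two previous cases and the non-integrability reduces to the single bracket computation above.
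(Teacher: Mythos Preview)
Your approach mirrors the paper's: the same irreducible decomposition (your $V_i^j$ are the paper's $V_1,\dots,V_4$), the same equivalence pattern $V_1^1\sim V_2^1$ and $V_1^2\sim V_2^2$, and the same style of existence argument. One small overstatement to flag: from $JV_1^1\neq V_1^1$ together with $JV_1^1\subset V_1^1\oplus V_2^1$ you cannot quite conclude $JV_1^1=V_2^1$; Schur's lemma only yields $J|_{V_1^1}=\alpha'\,\mathrm{id}+\beta' T_{13}$ with $\beta'\neq0$, and $\alpha'$ need not vanish (the paper makes the same tacit jump). This does not damage either half of the proposition, however: existence only requires one example, and your Nijenhuis computation survives intact because any $V_1^1$-component of $Jv$ is killed by $[W_R,W_1]=0$, so only the necessarily nonzero $V_2^1$-component contributes to $[X_{12},Jv]$. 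Your choice of $N_J(Y_{12},v)$ is in fact tidier than the paper's $N_J(X_{12},X_{13}+Y_{13})$, since three of the four brackets vanish outright and the remaining one is visibly nonzero.
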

\begin{proof}
The following is a decomposition on $(K_\Theta)_0$ invariant and irreducible subspaces 
$$\mn_\Theta^+=\mgg_{\l_1-\l_2}\oplus \mgg_{\l_1+\l_2}\oplus \sum_{i=1}^4 V_i,$$\vspace{-1cm}
where \begin{multicols}{2}
\begin{eqnarray*}
V_1&=&\langle X_{13}+Y_{13},X_{14}+Y_{14} \rangle \\
V_2&=& \langle X_{13}-Y_{13},X_{14}-Y_{14}\rangle
\end{eqnarray*}

\begin{eqnarray*}
V_3&=&\langle X_{23}+Y_{23},X_{24}+Y_{24} \rangle\\
V_4&=&\langle X_{23}-Y_{23},X_{24}-Y_{24}\rangle
\end{eqnarray*}\end{multicols}

The subspace $V_1$ is $\mathfrak{k}_{\Theta}$-equivalent to subspace $V_3$
and the subspace $V_2$ is $\mathfrak{k}_{\Theta}$-equivalent to subspace $%
V_4 $. Indeed, we consider the linear transformations $T_{13}:V_1\longrightarrow
V_3$ given by $T_{13}(X_{13}+Y_{13})=X_{24}+Y_{24}$ and $%
T_{13}(X_{14}+Y_{14})=-(X_{23}+Y_{23})$ and 
 $T_{24}:V_2\longrightarrow V_4$ given by $T_{24}(X_{13}-Y_{13})=X_{24}-Y_{24}$ and $%
T_{24}(X_{14}-Y_{14})=-(X_{23}-Y_{23})$.  

Any $(K_{\Theta})_0$-invariant complex structure $%
J$ is of form 
\begin{equation*}
\begin{array}{rcl}
JX_{12} & = & aX_{12}+cY_{12},\\ 
  JY_{12} & = & (1+a^2)X_{12}/c-aY_{12}, \\
 J(X_{13}+Y_{13}) & = & c_1(X_{24}+Y_{24}), \\ 
 J(X_{14}+Y_{14}) & = & -c_1(X_{23}+Y_{23}),\\ 
 J(X_{13}-Y_{13}) & = & c_2(X_{24}-Y_{24}),\\ 
 J(X_{14}-Y_{14}) & = & -c_2(X_{23}-Y_{23}),
\end{array}%
\end{equation*} Direct computations show that this is also $M$-invariant and therefore $K_\Theta$-invariant. For such structure, $N_J(X_{12},X_{13}+Y_{13})$ never vanishes.
\end{proof}
\medskip

\begin{proposition} The real flag manifolds $\F_\Theta$ of type $D_4$ where $\Theta$ is one of the following sets: \begin{itemize}
\item $\Theta_1=\{\lambda_1-\lambda_2,\lambda_2-\lambda_3,\lambda_3-\lambda_4\}$,
\item $\Theta_2=\{\lambda_1-\lambda_2,\lambda_2-\lambda_3,\lambda_3+\lambda_4\}$, 
\item $\Theta_3=\{\lambda_2-\lambda_3,\lambda_3-\lambda_4,\lambda_3+\lambda_4\}$, 
\end{itemize}
do not admit $K_\Theta$-invariant almost complex structures. 
  \end{proposition}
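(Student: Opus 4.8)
The plan is to produce, in each of the three cases, an obstruction read off directly from the shape of the isotropy representation of $K_\Theta$, with no Nijenhuis computation needed. The first thing I would record is what $\Theta_1,\Theta_2,\Theta_3$ have in common. Each of them spans an $A_3$ subdiagram of the Dynkin diagram of $D_4$ (the star whose centre is $\lambda_2-\lambda_3$): they are exactly the three ``legs of length two'' through the centre, and the triality automorphism of $D_4$, which cyclically permutes the outer nodes $\lambda_1-\lambda_2$, $\lambda_3-\lambda_4$, $\lambda_3+\lambda_4$, permutes $\Theta_1,\Theta_2,\Theta_3$ cyclically. Triality is induced by an automorphism of $\mg=\so(4,4)$ that preserves the Iwasawa decomposition (it permutes the simple root spaces), so the flags $\F_{\Theta_1},\F_{\Theta_2},\F_{\Theta_3}$ are carried to one another by $K$-equivariant diffeomorphisms; hence it suffices to treat one of them, say $\Theta_1$. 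For $\Theta_1$ one has $\langle\Theta_1\rangle=\{\pm(\lambda_i-\lambda_j):1\le i<j\le 4\}$, so $\mg(\Theta_1)\cong\sl(4,\R)$ and $\mk_{\Theta_1}\cong\so(4)\cong\so(3)\oplus\so(3)$, while $\dim\mn_{\Theta_1}^+=12-6=6$.

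The second step is to identify $\mn_{\Theta_1}^+$ as a $K_{\Theta_1}$-module. In the matrix realization of $\so(4,4)$ used on the preceding pages, $\mk_{\Theta_1}$ consists of the matrices $\mathrm{diag}(A,A)$ with $A$ skew-symmetric, and $\mn_{\Theta_1}^+$ of the strictly upper-block matrices with skew-symmetric block $B$, the isotropy action being $B\mapsto[A,B]$. Thus $\mn_{\Theta_1}^+$, as a $K_{\Theta_1}$-module, is the adjoint representation of $\so(4)$, which is the direct sum $W_1\oplus W_2$ of the two commuting simple ideals of $\so(4)\cong\so(3)\oplus\so(3)$; each $W_i$ is $3$-dimensional, $K_{\Theta_1}$-irreducible, and the two are inequivalent (one ideal acts faithfully on $W_1$ and trivially on $W_2$, and conversely). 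This is the decomposition recorded in \cite{patrao2015isotropy}; by triality, or by the same computation, $\mn_{\Theta_i}^+=W_1\oplus W_2$ with $W_1,W_2$ inequivalent irreducible $K_{\Theta_i}$-modules of (odd) dimension $3$ for $i=2,3$ as well. I would also note at this point that $M$ acts by a single sign $\pm1$ on each $M$-equivalence class of roots, hence leaves each $W_i$ invariant, so the inequivalence persists for the full isotropy group $K_{\Theta_i}=M(K_{\Theta_i})_0$.

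The conclusion is then immediate: if $J\colon\mn_{\Theta_i}^+\to\mn_{\Theta_i}^+$ were a $K_{\Theta_i}$-invariant complex structure, Lemma \ref{lem2} applied to $V=W_1\oplus W_2$ would force $JW_1=W_1$ and $JW_2=W_2$; but $J|_{W_1}$ cannot be a complex structure on the $3$-dimensional real space $W_1$, since the $\pm i$ eigenspaces of a complex structure have equal dimensions. Hence no such $J$ exists, and $\F_{\Theta_i}$ admits no $K$-invariant almost complex structure, $i=1,2,3$. The only delicate point in this argument is pinning down the $K_\Theta$-module structure of $\mn_\Theta^+$ — specifically, that it splits into exactly two irreducible pieces and that these are odd-dimensional and pairwise inequivalent; granting that (which is the content of \cite{patrao2015isotropy}, and which one can also verify by the explicit computation above together with triality), everything else is formal.
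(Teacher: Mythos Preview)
Your argument is correct and coincides with the paper's in its essential step: in each case $\mn_{\Theta_i}^+$ splits as a direct sum of two inequivalent irreducible $K_{\Theta_i}$-submodules of dimension $3$, and then Lemma~\ref{lem2} forces a $K_{\Theta_i}$-invariant $J$ to preserve each summand, which is impossible in odd dimension. The paper simply lists the three explicit decompositions (the bases $\langle Y_{12}\pm Y_{34},\,Y_{13}\mp Y_{24},\,Y_{14}\pm Y_{23}\rangle$, etc.) and applies Lemma~\ref{lem2}, whereas you arrive at the same decomposition more conceptually---identifying the isotropy module for $\Theta_1$ with the adjoint representation of $\so(4)\cong\so(3)\oplus\so(3)$---and use triality to avoid repeating the computation for $\Theta_2,\Theta_3$; this is a tidy packaging but not a different strategy.
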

\begin{proof} Below we give the respective decompositions of $\mn_{\Theta_i}^+$ in $K_\Theta$ invariant and irreducible subspaces. 
\begin{eqnarray*}
\mathfrak{n}_{\Theta_1}^{-}&=&\langle Y_{12}+Y_{34}, Y_{13}-Y_{24},
Y_{14}+Y_{23}\rangle\oplus \langle Y_{12}-Y_{34}, Y_{13}+Y_{24},
Y_{14}-Y_{23} \rangle.\\
\mathfrak{n}_{\Theta_2}^{-}&=&\langle Y_{12}+X_{34}, Y_{13}-X_{24},
X_{14}+Y_{23}\rangle\oplus \langle Y_{12}-X_{34}, Y_{13}+X_{24},
X_{14}-Y_{23} \rangle.\\
\mn_{\Theta_3}^-&=&\langle X_{12}+Y_{12},
X_{13}+Y_{13},X_{14}+Y_{14}\rangle\oplus \langle X_{12}-Y_{12},
X_{13}-Y_{13},X_{14}-Y_{14}\rangle.
\end{eqnarray*}

We see that each of them decomposes as a sum of two irreducible subspaces $V_1$ and $V_2$ which induce non-equivalent representations and such that $\dim V_1=\dim V_2=3$. Lemma \ref{lem2} implies that any $K_\Theta$-invariant complex structure preserves each of these irreducible components, which is not possible since these are odd dimensional. Therefore $\F_{\Theta_i}$ does not admit $K$-invariant almost complex structures for $i=1,2,3$.
\end{proof}

\end{document}